\documentclass[preprint,11pt,numbers]{elsarticle-arxiv}

\usepackage{amssymb}
\usepackage{amsmath}
\usepackage{amsthm}
\usepackage{tikz}
\usetikzlibrary{calc}
\usepackage{url}
\usepackage{subcaption}

\setcitestyle{square,comma}

\newtheorem{theorem}{Theorem}[section]

\newtheorem{corollary}{Corollary}[theorem]
\newtheorem{conjecture}{Conjecture}[section]

\newtheorem{definition}{Definition}[section]
\newtheorem{lemma}{Lemma}[section]
\newtheorem{lemmacorollary}{Corollary}[lemma]

\newtheorem{observation}{Observation}[section]

\journal{Discrete Applied Mathematics}

\begin{document}
\bibliographystyle{alpha}
\begin{frontmatter}

\title{\textbf{\Large Proper Conflict-Free Choosability for Graphs with Bounded Average Degree}}

\author[1]{Zhijun Lu}
\author[1]{Qirui Ying}
\author[1]{Huimin Song\corref{cor1}}
\cortext[cor1]{Corresponding author. \textit{E-mail address:} \url{hmsong@sdu.edu.cn} (H.Song).}

\affiliation[1]{organization={School of Mathematics and Statistics},
            addressline={Shandong University},
            city={Weihai},
            postcode={264209},
            state={Shandong},
            country={China}}

\begin{abstract}
\ \ \ For a graph \(G\), a proper coloring of \(G\) is called proper conflict-free if for every non-isolated vertex \(u\), there is at least one color appearing exactly once in \(N_G(u)\). A graph \(G\) is proper conflict-free \(f\)-choosable if for every list assignment \(L\) with \(|L(v)|\ge f(v)\) for each vertex \(v\), \(G\) admits a proper conflict-free \(L\)-coloring.

Recently, Kashima, \v{S}krekovski, and Xu proposed a conjecture on proper conflict-free list coloring. For a graph \(G\), let \(\kappa_G:V(G)\to \mathbb{N}\) be defined by
\[
\kappa_G(v)=
\begin{cases}
4, & \text{if } d_G(v)=2,\\[4pt]
d_G(v)+1, & \text{if } d_G(v)\neq 2.
\end{cases}
\]
They conjectured that every connected graph other than \(C_5\) is proper conflict-free \(\kappa_G\)-choosable.

In this paper, we confirm this conjecture in two classes of graphs with bounded average degree, thereby generalizing results of Kashima, \v{S}krekovski, and Xu and of Wang and Zhang. We prove that every connected graph \(G\neq C_5\) with either \(\operatorname{mad}(G)<\frac{12}{5}\) or \(\Delta(G)\le3\) is proper conflict-free \(\kappa_G\)-choosable. To prove these results, we introduce a method based on systems of proper conflict-free representatives and develop a construction of auxiliary graphs that preserves the maximum average degree bound.
\end{abstract}

\begin{keyword}
\textit{proper conflict-free coloring \sep list coloring \sep sparse graphs \sep subcubic graphs}
\end{keyword}

\end{frontmatter}

\section{Introduction}\label{s1}

\subsection{Preliminaries}

All graphs considered are simple, finite, and undirected. For a graph $G$, let $V(G)$ and $E(G)$ denote its vertex set (always nonempty) and edge set, respectively. The \emph{neighborhood} of a vertex $v$ in $G$ is $N_G(v)=\{u\in V(G):uv\in E(G)\}$, and its elements are called the \emph{neighbors} of $v$. The \emph{degree} of $v$, denoted by $d_G(v)$, is $|N_G(v)|$. A graph $H$ is a \emph{subgraph} of a graph $G$, denoted by $H\subseteq G$, if $V(H)\subseteq V(G)$ and $E(H)\subseteq E(G)$. 
A subgraph $H$ of $G$ is called an \emph{induced subgraph} of $G$ if $E(H)=\{uv\in E(G):u,v\in V(H)\}$. For a set $S\subseteq V(G)$, the subgraph of $G$ induced by $S$ is denoted by $G[S]$.
The \emph{degeneracy} of a graph $G$ is defined by $\max_{\emptyset\neq H\subseteq G}\delta(H)$, where the maximum is taken over all nonempty subgraphs $H$ of $G$. For a nonnegative integer $d$, a graph $G$ is called \emph{$d$-degenerate} if its degeneracy is at most $d$. The \emph{maximum average degree} of $G$, denoted $\operatorname{mad}(G)$, is defined as $\operatorname{mad}(G)=\max_{H\subseteq G,\,V(H)\neq\emptyset}\frac{2|E(H)|}{|V(H)|}$, where $H$ is a subgraph of $G$. The \emph{maximum degree} $\Delta(G)$ and the \emph{minimum degree} $\delta(G)$ are the maximum and minimum degree among all vertices of $G$, respectively. The \emph{girth} $g(G)$ is the length of a shortest cycle in $G$, and $g(G)=\infty$ if $G$ is acyclic. We denote by $C_5$ the 5-cycle. When no ambiguity arises, we abbreviate $d_G(v)$, $N_G(v)$, and $g(G)$ as $d(v)$, $N(v)$, and $g$, respectively.

For $k\in\mathbb{N}$, a vertex $v$ is called a \emph{$k$-vertex}, \emph{$k^+$-vertex}, or \emph{$k^-$-vertex} if $d(v)=k$, $d(v)\ge k$, or $d(v)\le k$, respectively. A set of cardinality exactly $k$ is called a \emph{$k$-set}. We say that a graph $G$ is $k$-regular if $d(v)=k$ for every vertex $v$ in $G$, and we say that $G$ is subcubic if $\Delta(G)\le3$. For a graph $F$, the \emph{complete subdivision} of $F$, denoted by $S(F)$, is the graph obtained from $F$ by subdividing every edge exactly once; that is, each edge $uv\in E(F)$ is replaced by a path $u x_{uv} v$ of length 2, where all vertices $x_{uv}$ are distinct.

Let \(Q\) be a path component of the subgraph of \(G\) induced by its \(2\)-vertices. Exactly two edges of \(G\) have one endpoint in \(V(Q)\) and the other outside \(V(Q)\). If the endpoints of these two edges outside \(Q\) are \(3^+\)-vertices, then \(Q\) is called a \emph{thread}. If \(|V(Q)|=k\), then \(Q\) is called a \emph{\(k\)-thread}; a \emph{\(k^+\)-thread} is a thread containing at least \(k\) vertices. The endpoints outside \(Q\) of these two edges are called the \emph{anchors} of \(Q\), and they are not required to be distinct. The thread \(Q\) is said to be \emph{incident with} each of its anchors. 
For a vertex $v$, let $n_{2^+\text{-thread}}(v)$ denote the number of $2^+$-threads incident with $v$, let $n_{1}(v)$ denote the number of $1$-neighbors of $v$.

For a positive integer $k$, let $[k]=\{1,2,\dots,k\}$. A proper $k$-coloring $\varphi$ of a graph $G$ is a mapping $\varphi:V(G)\to[k]$ satisfying $\varphi(v)\neq\varphi(u)$ for every edge $uv\in E(G)$. We say that a proper coloring $\varphi$ of $G$ is \emph{proper conflict-free} (or simply \textnormal{PCF}) if for every non-isolated vertex $v$ of $G$, there exists a color that appears exactly once in its neighborhood. Such a color is called a \emph{unique color} of $v$. The \emph{proper conflict-free chromatic number} of $G$, denoted $\chi_{\mathrm{pcf}}(G)$, is the minimum number of colors so that $G$ admits a proper conflict-free coloring.

Let $G$ be a graph. A \emph{list assignment} $L$ for $G$ is a mapping that assigns to each vertex $v\in V(G)$ a set $L(v)$ of allowable colors. An \emph{$L$-coloring} is a mapping $\varphi:V(G)\to\bigcup_{v\in V(G)}L(v)$ such that $\varphi(v)\in L(v)$ for each $v$ and $\varphi(u)\neq\varphi(v)$ for every edge $uv\in E(G)$. Given a function $f:V(G)\to\mathbb{N}$, an \emph{$f$-list assignment} is a list assignment $L$ satisfying $|L(v)|\ge f(v)$ for all $v\in V(G)$. The graph $G$ is \emph{$f$-choosable} if it admits an $L$-coloring for every $f$-list assignment $L$. In particular, for a positive integer $k$, $G$ is \emph{$k$-choosable} if it is $f$-choosable with $f(v)\equiv k$ for every vertex $v$ of $G$; and $G$ is \emph{degree-choosable} if it is $f$-choosable with $f(v)=d(v)$ for every vertex $v$ of $G$.

A graph $G$ is called \emph{proper conflict-free $f$-choosable} if for every $f$-list assignment $L$, there exists a proper conflict-free $L$-coloring. Standard variants derived from this framework include:
\begin{itemize}
    \item \emph{proper conflict-free $k$-choosable} if $f(v)\equiv k$;
    \item \emph{proper conflict-free (degree+$k$)-choosable} if $f(v)=d(v)+k$ for some positive integer $k$.
\end{itemize}

In this paper, 
for a graph $G$, let $\kappa_G:V(G)\to\mathbb{N}$ be defined by
\[
\kappa_G(v)=
\begin{cases}
4, & \text{if } d_G(v)=2,\\[4pt]
d_G(v)+1, & \text{if } d_G(v)\neq2.
\end{cases}
\]
We call an $f$-list assignment with $f=\kappa_G$ a \emph{$\kappa_G$-list assignment}. Thus $G$ is proper conflict-free $\kappa_G$-choosable if every list assignment $L$ satisfying $|L(v)|\ge \kappa_G(v)$ for all $v\in V(G)$ admits a proper conflict-free $L$-coloring of $G$.

All undefined graph-theoretic terminology follows the standard reference of Bondy and Murty~\cite{bondy1979graph}.

\subsection{Background and Main Results}

The notion of list coloring was introduced independently by Vizing~\cite{vizing1976vertex} and by Erd\H{o}s, Rubin, and Taylor~\cite{erdos1979choosability} in the late 1970s. As a generalization of Brooks' theorem, degree-choosable graphs were classified by Borodin~\cite{borodin1977criterion} and Erd\H{o}s, Rubin, and Taylor~\cite{erdos1979choosability}:

\begin{theorem}[\cite{borodin1977criterion,erdos1979choosability}]
A connected graph is degree-choosable if and only if it is not a Gallai tree, where a Gallai tree is a graph whose blocks are complete graphs or odd cycles.
\end{theorem}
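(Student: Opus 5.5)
The plan is to treat the two directions separately. Necessity is a block-by-block construction of a bad list assignment. Sufficiency reduces, via a greedy extension argument, to showing that a few small 2-connected graphs are degree-choosable.

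\emph{Gallai trees are not degree-choosable.} I would argue by induction on the number of blocks that every connected Gallai tree $G$ has a list assignment $L$ with $|L(v)|=d(v)$ admitting no $L$-coloring.
\begin{itemize}
\item \emph{Single block.} If $G=K_n$, give every vertex the same list of $n-1$ colors. If $G$ is an odd cycle, give every vertex the same $2$-list.
\item \emph{Inductive step.} Let $B$ be a leaf block attached at the cut vertex $v$, and put $G'=G-(V(B)\setminus\{v\})$. By induction $G'$ has a bad assignment $L'$ with $|L'(x)|=d_{G'}(x)$. Give $B$ a bad assignment $L_B$ of the above type, using a palette disjoint from all colors of $L'$.
\item \emph{Gluing.} Set $L(v)=L'(v)\cup L_B(v)$, so $|L(v)|=d_{G'}(v)+d_B(v)=d_G(v)$; every other vertex keeps its list from its unique block.
\item \emph{Why $L$ is bad.} In any $L$-coloring, if $v$ gets a color of $L_B(v)$, then the coloring restricted to $B$ is an $L_B$-coloring of $B$, which is impossible. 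Otherwise it restricts to an $L'$-coloring of $G'$, again impossible.
\end{itemize}

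\emph{Non-Gallai trees are degree-choosable.} The first tool is an extension lemma. Suppose $H$ is a connected induced subgraph of a connected graph $G$, and $H$ is degree-choosable. Then $G$ is degree-choosable. To see this, order $V(G)\setminus V(H)$ by non-increasing distance to $H$ and color greedily. Each such vertex still has an uncolored neighbor (on a shortest path to $H$) when it is colored, so a color remains available. Afterwards each $u\in V(H)$ has lost at most $d_G(u)-d_H(u)$ colors. Hence its residual list has size at least $d_H(u)$, and we finish on $H$.

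Since $G$ is not a Gallai tree, some block $B$ is neither complete nor an odd cycle. I then invoke the structural fact (Rubin's lemma) that such a 2-connected $B$ contains an induced subgraph which is either an even cycle or an even cycle with exactly one chord. More precisely, in the second case it is a theta graph $\theta(1,2,2k)$, or more generally $\theta(2,2,2k)$ in the usual formulation. I would prove this by taking a longest induced cycle, or by case analysis on an induced non-complete 2-connected piece via ear decompositions. It then remains to verify directly that even cycles and these theta graphs are degree-choosable.

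For an even cycle with $2$-lists, there are two cases:
\begin{itemize}
\item If all lists are equal, the cycle is bipartite and we 2-color it.
\item Otherwise there are adjacent $u,w$ with some $c\in L(u)\setminus L(w)$. Color $u$ with $c$, then color the resulting path greedily from the far end toward $w$. The vertex $w$ lost nothing, so it is still colorable at the end.
\end{itemize}
For the theta graphs I would use the same trick. Either some vertex can be precolored with a color missing from a neighbor's list, which leaves slack that lets the greedy order close up. Or all lists coincide locally, and then an explicit coloring exists because the graph is bipartite with a suitable structure.

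The step I expect to be the main obstacle is the structural lemma guaranteeing that an induced even cycle or theta with at most one chord exists in every 2-connected non-complete, non-odd-cycle graph, together with the case analysis for theta graphs.
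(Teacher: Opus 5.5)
The paper does not prove this theorem; it only cites Borodin and Erd\H{o}s--Rubin--Taylor. Your outline is essentially the original Erd\H{o}s--Rubin--Taylor route. The necessity construction (gluing bad assignments with disjoint palettes at a cut vertex) is correct, and so is your greedy extension lemma. The sufficiency half, however, has a concrete error at exactly the step you flagged.

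Your ``more precise'' version of the structural lemma is false. Take two $5$-cycles sharing an edge, i.e.\ $\theta(1,4,4)$. It is $2$-connected, neither complete nor an odd cycle, and triangle-free. Its only even cycle is the $8$-cycle, which carries the chord. So it contains no induced even cycle, no induced $\theta(1,2,2k)$, and no induced $\theta(2,2,2k)$; the latter family comes from the characterization of $2$-choosable graphs and plays no role here. Rubin's lemma only provides an induced even cycle or an induced even cycle with exactly one chord. The split $\theta(1,2a+1,2b+1)$ already contains an induced even cycle, so the case you genuinely have to handle is $\theta(1,2a,2b)$ with $a,b\ge1$. That graph contains two odd cycles, so your fallback ``all lists coincide locally and the graph is bipartite'' fails precisely where it is needed.

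The repair is that this fallback never arises. In a one-chord even cycle with $|L(x)|=d(x)$, a chord endpoint $u$ has a $3$-list and each of its degree-$2$ neighbors $w$ has a $2$-list, so some $c\in L(u)\setminus L(w)$ exists. Color $u$ with $c$. Then color the tree $G-u$ greedily in order of non-increasing distance to $w$, with $w$ last; $w$ has kept its full list, so it is still colorable.

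The same trick also removes the need for Rubin's lemma, which is the part you left unproved. Let $G$ be $2$-connected with a bad assignment $|L(x)|=d(x)$. For every edge $xy$ we must have $L(x)=L(y)$. Otherwise, color $x$ with a color of $L(x)\setminus L(y)$ and finish greedily on the connected graph $G-x$, ending at $y$, which has one spare color. Hence all lists equal one $d$-set, so $G$ is $d$-regular with $\chi(G)>d$, and Brooks' theorem forces $G$ to be complete or an odd cycle. Applying this to a block that is neither complete nor an odd cycle, and then using your extension lemma, completes the sufficiency direction.
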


Proper conflict-free coloring was introduced by Fabrici et al.~\cite{fabrici2023proper}. The study of this coloring in terms of the maximum degree was initiated by Caro, Petru\v{s}evski, and \v{S}krekovski~\cite{caro2023remarks}, who proposed the following Brooks-type conjecture.

\begin{conjecture}[\cite{caro2023remarks}]
\label{con:maximum-degree-pcf}
If \(G\) is a connected graph with \(\Delta(G)\ge3\), then \(\chi_{\mathrm{pcf}}(G)\le\Delta(G)+1\).
\end{conjecture}

This bound is tight already for complete graphs. The best‑known upper bounds are summarized in Table~\ref{tab:pcf-maximum-degree}.

\begin{table}[htbp]
\centering
\caption{Upper bounds for \(\chi_{\mathrm{pcf}}(G)\), where \(\Delta=\Delta(G)\).}
\label{tab:pcf-maximum-degree}
\begin{tabular}{|c|c|c|}
\hline
Range of \(\Delta\) & Upper bound & Reference\\
\hline
\(\Delta=3\) & \(4\) & \cite{liu2013linear,caro2023remarks}\\
\hline
\(4\le\Delta\le749\) & \(2\Delta-1\) & \cite{cho2025brooks}\\
\hline
\(750\le\Delta\le3999\) & \(\frac95\Delta+\sqrt{\Delta}\) & \cite{cranston2024proper}\\
\hline
\(4000\le\Delta\le27999\) & \(\frac53\Delta+\sqrt{\Delta}\) & \cite{cranston2024proper}\\
\hline
\(\Delta\ge28000\) & \(\Delta+600e\log\Delta\) & \cite{chuet2026new}\\
\hline
\end{tabular}
\end{table}

For sparse graphs, Cho et al.~\cite{cho2025proper} obtained the following result.

\begin{theorem}[\cite{cho2025proper}]
\label{thm:cho-pcf}
If \(G\) is a graph with \(\operatorname{mad}(G)<\frac{12}{5}\) and no induced \(C_5\), then \(\chi_{\mathrm{pcf}}(G)\le4\).
\end{theorem}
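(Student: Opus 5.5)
We argue by minimal counterexample combined with discharging. Let \(G\) be a counterexample to Theorem~\ref{thm:cho-pcf} minimizing \(|V(G)|+|E(G)|\): \(\operatorname{mad}(G)<\frac{12}{5}\), \(G\) has no induced \(C_5\), but \(\chi_{\mathrm{pcf}}(G)>4\). Both hypotheses pass to subgraphs, and a PCF coloring of a disconnected graph is assembled from its components. Hence \(G\) is connected and not a single vertex or edge. The plan is to establish a list of reducible configurations. Then a discharging argument with initial charge \(d(v)-\frac{12}{5}\) will show that every graph avoiding all of them has average degree at least \(\frac{12}{5}\), which is a contradiction.

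\textbf{Step 1 (reducible configurations).} In each case we delete a small set \(S\) (a vertex or a thread) to get \(G'=G-S\), take a PCF \(4\)-coloring \(\varphi\) of \(G'\) by minimality, and extend it. When extending we must be careful, because deleting \(S\) changes neighbourhoods: a neighbour \(w\) of \(S\) may lose its unique colour, or may acquire one it did not have. To handle this, we record for each \(w\in V(G')\) a unique colour \(c(w)\) of \(w\) in \(\varphi\). A new vertex \(x\) adjacent to \(w\) must then avoid \(\varphi(w)\) and \(c(w)\), so each neighbour forbids at most two colours. The configurations to show reducible are:
\begin{itemize}
\item[(i)] a \(1\)-vertex whose neighbour has degree at most \(2\);
\item[(ii)] a \(1\)-vertex \(v\) in general. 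Colour \(v\) differently from \(\varphi(u)\) and \(c(u)\), where \(u\) is its neighbour. Then \(v\)'s unique colour is automatic, and \(u\) keeps \(c(u)\).
\end{itemize}
Case (ii) shows that in fact \(\delta(G)\ge2\). Similarly we show:
\begin{itemize}
\item[(iii)] \(3^+\)-threads are reducible. Recolour the middle vertex of a thread \(xyz\) with neighbours \(a,x\) and \(z,b\). This is where the no-induced-\(C_5\) assumption enters: the only obstruction is when the ends close up into a \(5\)-cycle.
\item[(iv)] a \(3\)-vertex incident with three \(2\)-threads, or with two \(2\)-threads and a \(1\)-thread, is reducible.
\end{itemize}
More generally, we bound how many \(2^+\)-threads and \(2\)-neighbours a \(3\)-vertex can have. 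The typical argument deletes the \(3\)-vertex together with its incident threads and counts the available colours, using \(4\) colours against at most \(2\) forbidden colours from each far-side neighbour.

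\textbf{Step 2 (discharging).} Each \(2\)-vertex needs \(\frac25\) of charge. Each \(3^+\)-vertex \(v\) has surplus \(d(v)-\frac{12}{5}\ge\frac35\). The rules are:
\begin{itemize}
\item a \(3^+\)-vertex gives \(\frac25\) to each adjacent \(2\)-vertex on a \(1\)-thread;
\item a \(3^+\)-vertex gives \(\frac25\) to the adjacent vertex of a \(2\)-thread, that vertex being the one next to it.
\end{itemize}
A \(3\)-vertex can then afford to send charge out along at most one such edge, since \(\frac35-2\cdot\frac25<0\). This is exactly why the configurations in Step 1 must restrict a \(3\)-vertex to essentially at most one incident thread. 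For degree \(d\ge4\) the surplus satisfies \(d-\frac{12}{5}\ge\frac{2d}{5}\) once \(d\ge4\), so such a vertex can pay \(\frac25\) on every edge. The restriction therefore only needs to be proved for \(3\)-vertices. I expect that rule (i) must be refined to allow a \(3\)-vertex with two \(1\)-thread neighbours; in that case those \(1\)-threads would draw the deficit from their other anchor.

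\textbf{Main obstacle.} The crux is proving reducibility of a \(3\)-vertex incident with two threads. The extension must simultaneously be proper and preserve unique colours at the anchors, and with only \(4\) colours the naive counting is tight. I would first try recolouring the deleted \(3\)-vertex last. If that fails, I would allow the neighbour's colour \(c(w)\) to be changed by recolouring one already-coloured vertex, and I would exploit induced \(C_5\)-freeness to exclude the tight cases, where all forbidden pairs are disjoint and cover all four colours.
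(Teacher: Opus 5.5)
\textbf{The gap is Step~1(iii): the claim that $3$-threads are reducible is wrong as argued, and the rest of the proof depends on it.}

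Let $axyzb$ be a $3$-thread with $a\neq b$. Put $G'=G-\{x,y,z\}$ and fix a PCF $4$-colouring $\varphi$ of $G'$. The constraints on extending $\varphi$ are:
\begin{itemize}
\item your rule gives $\varphi(x)\notin\{\varphi(a),c(a)\}$ and $\varphi(z)\notin\{\varphi(b),c(b)\}$;
\item $\varphi(x)\neq\varphi(z)$, so that $y$ has a unique colour;
\item $\varphi(y)$ must avoid $\varphi(x)$ and $\varphi(z)$, and also $\varphi(a)$ and $\varphi(b)$, since otherwise $x$ or $z$ sees one colour twice.
\end{itemize}
Suppose $\varphi(a)\neq\varphi(b)$, $c(a)=\varphi(b)$ and $c(b)=\varphi(a)$. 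Then $\{\varphi(x),\varphi(z)\}$ must be the two remaining colours, and no colour is left for $y$. The middle vertex is exactly the one that cannot be coloured, so ``recolouring the middle vertex'' is what fails.

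This situation does not require $ab\in E(G)$. It occurs, with $c(a)$ and $c(b)$ forced, whenever the $G'$-neighbours of $a$ are coloured $\varphi(b),\gamma,\gamma$ and those of $b$ are coloured $\varphi(a),\gamma',\gamma'$. Only when $ab\in E(G)$ is $axyzb$ an induced $C_5$. So your claim that the sole obstruction is a closing $5$-cycle is false, and the no-induced-$C_5$ hypothesis gives you nothing here. Its real role is to guarantee that every induced subgraph $G-S$ has no $C_5$ component, so that minimality applies to it.

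Using the full set of unique colours of an anchor, rather than one fixed $c(a)$, removes the obstruction when one anchor has degree $3$. It does not help when $d_G(a),d_G(b)\ge4$. There you would have to change $\varphi$ on $G'$, which minimality does not let you control.

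\textbf{This is the whole difficulty, not a side issue.} If (iii) held, the theorem would follow quickly:
\begin{itemize}
\item the argument of Lemma~\ref{lem:structure-1}\textbf{(i)}, with all lists equal to $\{1,2,3,4\}$, shows that no $3$-vertex lies on a $2^+$-thread;
\item under the rule that each anchor sends $\frac15$ to every $2$-vertex of its thread (the refinement you already anticipate for $1$-threads), a $3$-vertex then sends at most $\frac35$;
\item a $d$-vertex with $d\ge4$ sends at most $\frac{2d}{5}\le d-\frac{12}{5}$.
\end{itemize}
Without (iii), the middle vertex of a $3$-thread receives nothing under your rules. Under the fair $\frac15$ rule, a $4$-vertex on four $3$-threads must send $\frac{12}{5}>\frac85$, and a $5$-vertex on five $3$-threads must send $3>\frac{13}{5}$. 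Excluding these configurations needs reducibility arguments at $4$- and $5$-vertices that have only four colours available. That is the real content of the theorem, and your proposal does not address it.

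For context, the paper does not prove this statement; it quotes it from Cho et al. Its Theorem~\ref{th1} does not imply the statement, because there a $d$-vertex gets $d+1$ colours. The double counting in Section~\ref{s:structural} uses $|L(u)|=d(u)+1$ essentially, so it does not transfer.

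Two smaller points:
\begin{itemize}
\item Your stronger requirement that a $3$-vertex have essentially one incident thread is unnecessary, and it is not reducible by deletion anyway. A $3$-vertex $u$ with $1$-threads to $p_1,p_2$ and a third neighbour $h$ may have to avoid $\varphi(h)$, $c(h)$, $\varphi(p_1)$ and $\varphi(p_2)$, which can be all four colours.
\item ``No induced $C_5$'' passes only to induced subgraphs, not to arbitrary ones: deleting the chord of a $5$-cycle with one chord produces a $C_5$. So minimize $|V(G)|$ and delete only vertex sets.
\end{itemize}
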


It is well‑known that every planar graph of girth at least \(g\) satisfies \(\operatorname{mad}(G)<\frac{2g}{g-2}\). Substituting $g=12$, we see the theorem yields an immediate consequence for planar graphs with girth at least 12.
Anderson et al.~\cite{anderson2025forb} further developed the Forb‑Flex Method to obtain better planar‑graph results.
\begin{theorem}[\cite{anderson2025forb}]
If \(G\) is a planar graph with girth at least \(11\), then \(\chi_{\mathrm{pcf}}(G)\le4\).
\end{theorem}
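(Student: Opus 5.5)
We would argue by a minimal counterexample combined with discharging, organised with the Forb-Flex bookkeeping of Anderson et al.~\cite{anderson2025forb}. Let \(G\) be a counterexample to the statement minimising \(|V(G)|+|E(G)|\). Since \(g(G)\ge 11\), \(G\) has no \(C_5\) and no short cycles at all. Moreover, every subgraph of \(G\) is planar with girth at least \(11\), so by the bound \(\operatorname{mad}<\frac{2g}{g-2}\) recalled above we have \(\operatorname{mad}(G)<\frac{22}{9}\). We may also assume \(G\) is connected. This is only slightly weaker than the hypothesis \(\operatorname{mad}<\frac{12}{5}\) of Theorem~\ref{thm:cho-pcf}. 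So the strategy is to reprove the reducible configurations behind Theorem~\ref{thm:cho-pcf} in a stronger, ``flexible'' form, and to recover the missing slack in the discharging by using planarity through faces.

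\textbf{Reducible configurations.} For each local configuration \(H\), delete it (or contract a thread to a shorter one) and \(4\)-colour the rest PCF by minimality. Then extend the colouring, recording for each uncoloured vertex a set \(\mathrm{Forb}\) of colours it may not take: neighbours' colours, and colours that would destroy an existing unique colour of a neighbour. We also record, for each boundary vertex, how much \emph{flexibility} it has: whether it already has a unique colour that survives every extension, or whether its unique colour must come from inside \(H\). The configurations we expect to prove reducible are:
\begin{itemize}
\item[(i)] no \(1\)-vertices;
\item[(ii)] no \(3^+\)-threads;
\item[(iii)] a \(3\)-vertex is incident with at most one \(2\)-thread and cannot have all three neighbours of degree \(2\);
\item[(iv)] restrictions on chains of \(3\)-vertices joined by \(2\)-threads (``bad'' \(3\)-vertices adjacent to bad \(3\)-vertices).
\end{itemize}
The Forb-Flex accounting is what lets (iii) and (iv) go through with only four colours: a recoloured \(2\)-vertex must avoid its two neighbours' colours and preserve both neighbours' unique colours, so it only fits if that vertex has enough flexibility.

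\textbf{Discharging.} Using Euler's formula for a plane embedding with face lengths at least \(11\), we assign charge \(\mu(v)=9d(v)-22\) to vertices and \(\mu(f)=9\ell(f)-22\cdot\ldots\). More precisely, we take the standard weighting \(\sum_v(2d(v)-6)+\sum_f(\ell(f)-6)=-12\), rescaled so that \(2\)-vertices have charge \(-2\), \(3\)-vertices \(0\) and faces of length \(\ell\ge 11\) have charge at least \(5\). Rules:
\begin{itemize}
\item[(R1)] each \(3^+\)-vertex sends charge to adjacent \(2\)-vertices, more to those in \(2\)-threads;
\item[(R2)] each face sends charge to the \(2\)-vertices and poor \(3\)-vertices on its boundary.
\end{itemize}
We then verify, using (i)--(iv), that every vertex and face ends nonnegative, contradicting the negative total.

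The main obstacle is the gap between \(\frac{22}{9}\) and \(\frac{12}{5}\). Pure vertex discharging cannot close it, since \(\operatorname{mad}\) alone with \(\operatorname{mad}<\frac{22}{9}\) allows too many \(2\)-vertices. Hence the face charges must be used, and, more delicately, extra reducible configurations are needed that involve several threads around a single face. A long face with many \(2\)-threads separated by \(3\)-vertices must be shown reducible by colouring along the face boundary with Forb-Flex propagation. I would first try to show that a face cannot carry too many consecutive \(2\)-threads, propagating flexibility around the face. This cyclic extension argument is where I expect most of the case analysis to concentrate.
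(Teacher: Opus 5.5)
The paper gives no proof of this statement; it only quotes it from \cite{anderson2025forb} as background. Your proposal therefore has to stand on its own, and as written it is a plan rather than a proof. The reducible configurations are only ones you ``expect'', and no discharging amounts are given. The one step that separates girth $11$ from the girth-$12$ case of Theorem~\ref{thm:cho-pcf} is explicitly deferred.

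More seriously, the plan rests on a misconception: faces are not a source of extra charge. Take the weighting that is tight for girth $11$, namely $\mu(v)=9d(v)-22$ and $\mu(f)=2\ell(f)-22$; by Euler's formula the total is $-44$. Under this weighting every $11$-face has charge $0$. If all faces are $11$-faces, then $|E(G)|=\frac{11}{9}(|V(G)|-2)$, so the average degree is $\frac{22}{9}-\frac{44}{9|V(G)|}$. The budget is then exactly the one that $\operatorname{mad}<\frac{22}{9}$ already gives.

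Your $(2d-6,\ \ell-6)$ weighting merely relocates the surplus of the $3$-vertices onto the faces. That is why your $3$-vertices start at $0$ and yet are supposed to pay under (R1). So the gap between $\frac{12}{5}$ and $\frac{22}{9}$ can only be closed by genuinely new reducible configurations, and that is exactly the part you do not supply.

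The configurations you do list are also not calibrated to what is true, or provable by your method. The argument of Lemma~\ref{lem:structure-1}\textbf{(i)} works verbatim with four colours, since girth $11$ rules out the $C_5$ case and the case $\xi=u$. Hence no $3$-vertex is incident with any $2^+$-thread. So the first half of (iii) is far from sharp, the chains in (iv) never occur, and every anchor of a $2^+$-thread is a $4^+$-vertex.

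For such anchors, (ii) does not follow from delete-and-extend. Take a $3$-thread $uv_1v_2v_3w$ with $d(u)=d(w)=4$. Nothing prevents the colouring of $G-\{v_1,v_2,v_3\}$ given by minimality from having $\varphi(u)=1$ with $N(u)\setminus\{v_1\}$ coloured $2,3,3$, and $\varphi(w)=2$ with $N(w)\setminus\{v_3\}$ coloured $1,3,3$. Then $\varphi(v_1)\notin\{1,2\}$, by properness and because otherwise $u$ loses its only unique colour; likewise $\varphi(v_3)\notin\{1,2\}$. Also $\varphi(v_1)\neq\varphi(v_3)$ for the sake of $v_2$, so $\{\varphi(v_1),\varphi(v_3)\}=\{3,4\}$. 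But $\varphi(v_2)$ must also avoid $1$ (else $v_1$ has no unique colour) and $2$ (else $v_3$ has none), which is impossible.

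So recolouring outside the thread is required; that is where an actual Forb-Flex argument would have to enter, and you do not give one. Contracting a thread is not available either: it shortens every cycle through the thread and can push the graph out of the girth-$11$ class, so minimality does not apply to the contracted graph.
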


Recently, the combination of proper conflict-free coloring with list coloring has attracted considerable attention. Kashima, \v{S}krekovski, and Xu~\cite{kashima2025results} introduced the notion of proper conflict-free $(\mathrm{degree}+k)$-choosability. A fundamental question raised in this direction is whether there exists an absolute constant $k$ such that every graph is proper conflict-free $(\mathrm{degree}+k)$-choosable~\cite{kashima2026degree}. They proved that
the complete subdivision $S(F)$ is proper conflict-free $(\mathrm{degree}+2)$-choosable for every graph $F$, and every graph of maximum degree at most $4$ is proper conflict-free $(\mathrm{degree}+3)$-choosable. Specifically, for subcubic graphs, they gave the following theorem.

\begin{theorem}[\cite{kashima2025results}]\label{sub}
If $G$ is a connected subcubic graph other than $C_5$, then $G$ is proper conflict-free $(\mathrm{degree}+2)$-choosable.
\end{theorem}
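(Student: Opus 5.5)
Theorem~\ref{sub} asserts that every connected subcubic graph $G\neq C_5$ admits a proper conflict-free $L$-coloring whenever $|L(v)|\ge d(v)+2$ for all $v$. The plan is a minimal counterexample argument. Fix a counterexample $G$ with $|V(G)|+|E(G)|$ minimum, together with a bad $(\mathrm{degree}+2)$-list assignment $L$. We then derive structural properties of $G$ until no graph can satisfy them all.

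\textbf{Small cases.} First dispose of graphs with $\Delta(G)\le 2$, i.e.\ paths and cycles.
\begin{itemize}
\item \emph{Paths.} Color greedily along the path. A vertex of degree $1$ has $3$ colors, a vertex of degree $2$ has $4$. When coloring $v_i$, it must avoid $v_{i-1}$ (properness) and $v_{i-2}$ (so that $v_{i-1}$ sees two distinct colors, which makes each of them unique). That removes at most $2$ colors from a list of size at least $3$, so a choice always exists.
\item \emph{Cycles $C_n$, $n\ne5$.} Lists have size $4$ and we need a distance-$2$ coloring, i.e.\ a proper coloring of $C_n^2$. For $n\ne5$, $C_n^2$ is $4$-choosable: for $n\ge 6$ it is $4$-regular and not complete, and Gallai-tree arguments together with the known choosability of $C_n^2$ (Prowse--Woodall) handle it. For $n\in\{3,4\}$, $C_n^2=K_n$, and the lists suffice.
\end{itemize}
So we may assume $\Delta(G)=3$.

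\textbf{Reducible configurations.} The core is a list of configurations that cannot occur in $G$. For each one we delete a vertex, or contract or replace a thread, obtaining a smaller graph $G'$. Each component of $G'$ is colored by minimality; if a component is $C_5$, it is handled directly. The coloring is then extended.

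The key accounting fact concerns a vertex $v$ being recolored or added. Its color must differ from its neighbors and must not destroy the unique color of any neighbor. It must also give $v$ itself a unique color. A neighbor $u$ of degree at most $3$ loses uniqueness only if $v$ duplicates a specific color, so each neighbor forbids at most about $2$ colors. Since $|L(v)|=d(v)+2$, counting shows the following are reducible:
\begin{enumerate}
\item a $1$-vertex, by deleting it and then recoloring its neighbor if needed;
\item two adjacent $2$-vertices, i.e.\ a $2^+$-thread, which is shortened by one;
\item a $2$-vertex whose neighbors are both $3$-vertices, with suitable conditions.
\end{enumerate}
When the simple count fails, we use a flexibility step: uncolor a neighbor $u$ as well and recolor $u$ and $v$ together. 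The spare colors (each vertex has two more colors than its degree) allow choosing the pair so that every affected vertex retains a unique color. In particular, each vertex of degree $3$ needs only one of its neighbors' colors to differ from the other two.

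\textbf{Main obstacle: the cubic case.} After the reductions, $G$ is cubic, and deletion no longer gives slack easily. Here I would delete a vertex $v$, or a short cycle through $v$, and color $G-v$. Its three neighbors now have degree $2$ and list size $5\ge 4$, so minimality applies, provided no component of $G-v$ is $C_5$. The degenerate cases ($K_4$, $K_{3,3}$, the prism, the Petersen graph, and graphs where $G-v$ has a $C_5$ component) are treated separately by explicit colorings.

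To extend to $v$, we need $\varphi(v)$ to satisfy three conditions:
\begin{itemize}
\item it avoids the $3$ neighbor colors;
\item it keeps a unique color for each neighbor $u$: since $u$ had degree $2$ in $G-v$ and saw two distinct colors, $\varphi(v)$ must avoid one of them, forbidding at most $1$ color per neighbor;
\item it gives $v$ itself a unique color, which depends only on whether the neighbors' colors are not all identical (they may coincide).
\end{itemize}
Naively this forbids up to $6$ colors from a list of $5$. So I would strengthen the induction: precolor or restrict $N(v)$ in $G-v$ by shrinking those neighbors' lists, which have $1$ extra color to spare, so that the forbidden sets overlap. For example, force two neighbors to receive the same color, which automatically makes the third neighbor's color unique for $v$. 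This careful choice of list restrictions is where I expect most of the case analysis to lie.
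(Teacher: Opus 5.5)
Your handling of paths, cycles and $1$-vertices is fine. With $(\mathrm{degree}+2)$-lists, a leaf $x$ can simply be deleted and then colored from $L(x)\setminus\{\varphi(u),\varphi^*(u)\}$. Everything after that, however, is asserted rather than proved, and the step you single out as the crux is misdiagnosed.

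In the cubic case, each $u_i$ is a $2$-vertex of $G-v$ whose two neighbors already carry distinct colors. So no color of $v$ can destroy its unique color. $\varphi(v)$ only has to avoid $\varphi(u_1),\varphi(u_2),\varphi(u_3)$, and it always has at least two legal values. The sole obstruction is $\varphi(u_1)=\varphi(u_2)=\varphi(u_3)$, which is a property of the coloring of $G-v$ that no choice of $\varphi(v)$ can repair.

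Your remedy does not address this. Forcing two neighbors to share a color says nothing about whether the third has that color too. It also cannot be implemented: the induction hypothesis quantifies over all list assignments, so shrinking lists never forces equalities, and identifying two neighbors of $v$ creates a vertex of degree up to $4$.

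The same obstruction, now $\varphi(u)=\varphi(w)$, defeats the one-vertex deletion of a $2$-vertex $v$ between $3$-vertices $u,w$. So your item~3 ``with suitable conditions'' is precisely where the work lies, and the claim that $G$ is cubic after the reductions is unsupported. Similarly, after contracting an edge of a thread, the re-inserted $2$-vertex may face four forbidden colors from its $4$-list. Deleting the first two $2$-vertices next to an anchor is what works.

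The missing idea is to delete the problem vertex together with a neighbor and color that neighbor last, so that it can break the monochromatic pattern. For cubic $G$, delete both ends of an edge $vu_1$, where $N(v)=\{u_1,u_2,u_3\}$ and $N(u_1)=\{v,a,b\}$. Then $G-\{v,u_1\}$ has no isolated vertex, and since only four edges leave $\{v,u_1\}$, it has no $C_5$ component. After coloring it:
give $u_1$ a color avoiding $\varphi(a),\varphi(b)$ and the common color of $u_2,u_3$ if they agree (at most $3$ of $5$ excluded);
then give $v$ a color avoiding $\varphi(u_1),\varphi(u_2),\varphi(u_3)$ and the common color of $a,b$ if they agree (at most $4$ of $5$ excluded).

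A $2$-vertex $v$ with $3$-neighbors $u,w$ is handled in the same spirit by deleting $\{u,v\}$. This is enlarged to $u$ together with all its neighbors when these are all $2$-vertices.

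The paper does not reprove this cited result. It follows from \textbf{Theorem~\ref{th2}} because $\kappa_G(v)\le d_G(v)+2$, and there all such case analysis is outsourced. Once leaves are excluded via \textbf{Lemma~\ref{lem:leaf-neighbor}}, every list has size at least $4$. \textbf{Lemma~\ref{lem:liu-yu}} then supplies a superlinear, hence PCF, $L$-coloring, and $K_{3,3}$ is treated directly in \textbf{Lemma~\ref{k33}}.
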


 Wang and Zhang~\cite{wang2025proper} proved the same proper conflict-free $(\mathrm{degree}+2)$‑choosability result for $K_4$-minor-free graphs with $\Delta\le 4$ and outer-$1$-planar graphs with $\Delta\le 4$. They further proved the following result about planar graphs.

\begin{theorem}[\cite{wang2025proper}]\label{t1}
Every planar graph with girth at least $12$ is proper conflict-free $(\mathrm{degree}+2)$-choosable.
\end{theorem}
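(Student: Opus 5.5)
Since the statement is only about planar graphs of girth at least $12$, I would first pass to a purely sparse setting. Euler's formula gives $\operatorname{mad}(G)<\frac{2g}{g-2}\le\frac{12}{5}$, and girth $\ge 12$ excludes an induced $C_5$. So it suffices to prove that every graph with $\operatorname{mad}(G)<\frac{12}{5}$ and no $C_5$ component is proper conflict-free $(\mathrm{degree}+2)$-choosable. Note that $d(v)+2\ge\kappa_G(v)$ always, so lists of size $d+2$ are at least as generous as $\kappa_G$-lists. I would take a counterexample $(G,L)$ minimizing $|V(G)|+|E(G)|$. Then $G$ is connected, and every proper subgraph $H$ still satisfies $\operatorname{mad}(H)<\frac{12}{5}$. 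One caveat: any $C_5$ components of $H$ have to be handled separately, which is easy because deleting material from $G$ creates a $C_5$ component only in very restricted situations.

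\textbf{Step 1: reducible configurations.}
\begin{itemize}
\item \emph{$\delta(G)\ge 2$.} Let $v$ be a $1$-vertex with neighbor $u$. Take a PCF $L$-coloring $\varphi$ of $G-v$. Color $v$ from its $3$ colors, avoiding $\varphi(u)$ and the unique color $c_u$ of $u$ in $G-v$ (if $u$ is not isolated there). Then $u$ keeps its unique color, and $v$ trivially has one, since it has only one neighbor.
\item \emph{No long threads.} A $2$-vertex has $4$ colors. It must avoid its two neighbors' colors and must not destroy a unique color of either neighbor, which is at most $4$ constraints, but possibly too many. So I would delete a whole thread $x_1\dots x_k$ and recolor it greedily from one end. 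I expect this to show that $3^+$-threads are reducible, and likewise a $2$-thread whose anchor is a $2$-vertex-heavy $3$-vertex.
\item \emph{Unique colors for the remaining vertices.} The key device is to choose, for each $2$-vertex $x$ on a thread, which neighbor will be its unique color. This is automatic when its two neighbors receive different colors, so we just need to keep them distinct. Counting forbidden colors (the neighbors' colors plus the unique colors of the anchors) should leave at least one color in every list.
\end{itemize}

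\textbf{Step 2: discharging.} Give each vertex charge $d(v)-\frac{12}{5}$, so the total charge is negative. Each $2$-vertex needs $\frac{2}{5}$, which it receives from the anchors of its thread: $\frac{2}{5}$ per incident $1$-thread, and $\frac{2}{5}$ to each vertex of an incident $2$-thread. A $k$-vertex with $k\ge 4$ has charge $k-\frac{12}{5}\ge\frac{2k}{5}\cdot$ enough to give $\frac{2}{5}$ along each of its $k$ edges even when all of them lead to $2$-threads. So the only danger is a $3$-vertex, which has just $\frac{3}{5}$ but may be incident with up to three $2^+$-threads.

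\textbf{Step 3: the main obstacle.} I therefore need a stronger reducible configuration: a $3$-vertex $v$ incident with at least two $2$-threads, or with one $2$-thread and two $1$-threads, beyond what the charge allows. Here $v$ has $5$ colors. I would delete $v$ together with its incident threads, color the rest by minimality, then color $v$ and recolor the thread vertices. The order matters: choose $v$'s color so that $v$ gets a unique color from a thread neighbor, while still respecting the far anchors' unique colors. This exchange argument on the thread vertices, with their $4$-color lists, is where I expect the real difficulty. If a direct count fails, I would fall back on precoloring the far anchors and using a system of distinct representatives for the unique colors. Once every $3$-vertex has final charge at least $0$, the total charge is nonnegative, contradicting $\operatorname{mad}(G)<\frac{12}{5}$.
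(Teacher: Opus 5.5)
There is a genuine gap, and it sits at the $4^+$-vertices rather than at the $3$-vertices. Your reduction to $\operatorname{mad}(G)<\frac{12}{5}$ and your proof that $\delta(G)\ge2$ are fine. The problem is the Step~2 count for $k\ge4$: it only covers $1$- and $2$-threads, so it relies on your Step~1 expectation that $3^+$-threads are reducible. Greedy recoloring does not give this. Take a $3$-thread $uv_1v_2v_3w$ and color $G-\{v_1,v_2,v_3\}$ by minimality. You then need $\varphi(v_1)\in L(v_1)\setminus\{\varphi(u),\varphi^*(u)\}$, $\varphi(v_2)\in L(v_2)\setminus\{\varphi(u),\varphi(w)\}$ and $\varphi(v_3)\in L(v_3)\setminus\{\varphi(w),\varphi^*(w)\}$. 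These three colors must be pairwise distinct: two pairs are edges, and $\varphi(v_1)\neq\varphi(v_3)$ is needed for the unique color of $v_2$. All three residual sets can be the same $2$-set. The larger lists of $u$ and $w$ do not help, because $u$ and $w$ are already colored. Sparsity does not exclude the configuration either. Subdividing every edge of the octahedron three times gives a planar graph of girth $12$ and average degree $\frac{16}{7}$ in which every $4$-vertex is incident with four $3$-threads. Such a vertex has charge $4-\frac{12}{5}=\frac85$ but owes $4\cdot\frac35=\frac{12}{5}$ under the natural rule, which sends $\frac15$ from each anchor to each $2$-vertex. (Your rule as written, $\frac25$ to every vertex of an incident $2$-thread, even double-pays.) So the discharging cannot close without a further reducible configuration at $4^+$-vertices.

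The missing ingredient is the bound $n_{2^+\text{-thread}}(u)\le k-2$ for a $k$-vertex $u$ with $k\ge4$. This is the special case of \textbf{Lemma~\ref{lem:structure-4}} with no ring neighbors, and it goes as follows.
Delete $u$ together with the first two $2$-vertices $v_i,w_i$ of $k-1$ of its $2^+$-threads. Let $z_i$ be the vertex following $w_i$, and let $x$ be the remaining neighbor of $u$.
For each of the at least $k$ colors $\alpha\in L(u)\setminus\{\varphi(x),\varphi^*(x)\}$, fix $\varphi(w_i)\in L(w_i)\setminus\{\alpha,\varphi(z_i),\varphi^*(z_i)\}$. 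Then consider the residual sets $L(v_i)\setminus\{\alpha,\varphi(w_i),\varphi(z_i)\}$.
By \textbf{Lemma~\ref{lem:pcf-representative-system}} with $B=(\varphi(x))$, failure forces at least three singleton residual sets for every $\alpha$. On the other hand, each $v_i$ yields a singleton for at most three values of $\alpha$. Together these give $3k\le3(k-1)$, a contradiction.
Combined with the reducibility of $4^+$-threads (\textbf{Lemma~\ref{lem:structure-1}}(ii)), a $k$-vertex then sends at most $\frac{3(k-2)+2}{5}\le k-\frac{12}{5}$. Hence $3$-threads never need to be reducible.

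Conversely, the configuration you call the main obstacle is easy. A $3$-vertex $u$ on any $2^+$-thread $uvw\xi\cdots$ is reducible by deleting only $v$ and $w$ (\textbf{Lemma~\ref{lem:structure-1}}(i)). In $G-\{v,w\}$ the vertex $u$ has degree $2$, so its two remaining neighbors already have distinct colors, and $u$ keeps a unique color whatever $v$ receives. Then pick $\varphi(w)\notin\{\varphi(\xi),\varphi^*(\xi),\varphi(u)\}$ and $\varphi(v)\notin\{\varphi(u),\varphi(w),\varphi(\xi)\}$ from the $4$-lists. Thus $3$-vertices pay only $\frac15$ per incident $1$-thread.

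Note that the paper does not reprove this statement. It follows from \textbf{Theorem~\ref{th1}}, because $\kappa_G(v)\le d_G(v)+2$. The proof of \textbf{Theorem~\ref{th1}} is the same minimal-counterexample-and-discharging scheme you propose, built on the two lemmas above.
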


From the viewpoint of degeneracy, Kashima, \v{S}krekovski, and Xu proved that every $d$-degenerate graph is proper conflict-free $(\mathrm{degree}+d+1)$-choosable and every tree is proper conflict-free $(\mathrm{degree}+1)$-choosable~\cite{kashima2026degeneracy}.

Since every graph with $\operatorname{mad}(G)<k$ has degeneracy less than $k$, we immediately obtain the following observation.

\begin{observation}\label{obs:mad-degree-plus}
Let $k\ge2$ be an integer. If $\operatorname{mad}(G)<k$, then $G$ is proper conflict-free $(\mathrm{degree}+k)$-choosable.
\end{observation}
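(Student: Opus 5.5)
The plan is to reduce the observation to the degeneracy result of Kashima, \v{S}krekovski, and Xu~\cite{kashima2026degeneracy} quoted just above: every $d$-degenerate graph is proper conflict-free $(\mathrm{degree}+d+1)$-choosable. So it suffices to show that $\operatorname{mad}(G)<k$ forces $G$ to be $(k-1)$-degenerate, and then take $d=k-1$, which gives $d+1=k$.

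To bound the degeneracy, let $H$ be any nonempty subgraph of $G$. Its minimum degree is at most its average degree, so
\[
\delta(H)\le \frac{\sum_{v\in V(H)} d_H(v)}{|V(H)|}=\frac{2|E(H)|}{|V(H)|}\le \operatorname{mad}(G)<k .
\]
Since $\delta(H)$ is an integer and $k$ is an integer, $\delta(H)\le k-1$. Taking the maximum over all nonempty $H\subseteq G$ shows that the degeneracy of $G$ is at most $k-1$, i.e.\ $G$ is $(k-1)$-degenerate. Applying the cited theorem with $d=k-1\ge 1$ then yields that $G$ is proper conflict-free $(\mathrm{degree}+k)$-choosable.

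The only point needing care is the integrality step: the strict inequality $\operatorname{mad}(G)<k$ together with $k\in\mathbb{Z}$ is what allows replacing $\delta(H)<k$ by $\delta(H)\le k-1$. This is also why the observation is stated only for integers $k$. There is no real obstacle beyond this; the hypothesis $k\ge2$ just ensures $d=k-1\ge1$, although the cited result would also cover $d=0$.
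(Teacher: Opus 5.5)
Your proposal is correct and matches the paper's one-line justification: $\operatorname{mad}(G)<k$ forces degeneracy at most $k-1$, and the Kashima--\v{S}krekovski--Xu degeneracy theorem with $d=k-1$ then gives proper conflict-free $(\mathrm{degree}+k)$-choosability. You also write out the step $\delta(H)\le 2|E(H)|/|V(H)|$ and the integrality argument, which the paper leaves implicit.
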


Kashima, \v{S}krekovski, and Xu also studied proper conflict-free list coloring of some classes of sparse graphs. They 
proved that every connected outerplanar graph other than $C_5$ is proper conflict-free $(\mathrm{degree}+2)$-choosable~\cite{kashima2026proper} and gave the following result and conjecture in~\cite{kashima2026degree}.

\begin{theorem}[\cite{kashima2026degree}]\label{kashima-sparse}
Let $G$ be a graph.
\begin{itemize}
    \item If $\operatorname{mad}(G)<\frac{10}{3}$, then $G$ is proper conflict-free $(\mathrm{degree}+3)$-choosable.
    \item If $G$ is connected, $G\neq C_5$, and $\operatorname{mad}(G)<\frac{18}{7}$, then $G$ is proper conflict-free $(\mathrm{degree}+2)$-choosable.
\end{itemize}
\end{theorem}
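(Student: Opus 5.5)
We would prove both parts by the minimal counterexample method combined with discharging. Take a counterexample $G$ with $|V(G)|+|E(G)|$ minimal, and a list assignment $L$ of the relevant size ($d(v)+3$ or $d(v)+2$) with no proper conflict-free $L$-coloring. Any subgraph $H$ of $G$ satisfies $\operatorname{mad}(H)\le\operatorname{mad}(G)$. Also $d_H(v)\le d_G(v)$, so $L$ restricted to $H$ is still a valid list assignment for $H$. Hence every proper subgraph admits a PCF $L$-coloring, provided that, in the second part, each component of it is not $C_5$. We handle this exception separately: when the deletion creates a $C_5$ component, we color it by hand using the extra list slack at its attachment vertex.

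The basic extension step is the following. Delete a low-degree vertex $v$ (or a short thread), take a PCF coloring $\varphi$ of the remainder, and choose $\varphi(v)$ so that three conditions hold. First, $\varphi(v)$ differs from the colors of the neighbors of $v$. Second, $\varphi(v)$ is not the unique color of any neighbor $u$; it suffices to forbid one designated unique color per neighbor. Third, $v$ itself sees a unique color.

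In the first part, the lists have size $d(v)+3$, so at a $1$-vertex or $2$-vertex we forbid at most $2d(v)\le d(v)+2$ colors. The only issue left is giving $v$ a unique color. At a $1$-vertex this is automatic. At a $2$-vertex $v$ with neighbors $x,y$ and $\varphi(x)=\varphi(y)$, we recolor one of the neighbors, say $x$, using the slack in $L(x)$ before coloring $v$. A similar count shows that a $3$-vertex adjacent to enough $2^-$-vertices is reducible.

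For the second part, the lists have size $d(v)+2$, so the counting is tighter. We would prove that the following configurations are reducible: $1$-vertices; $3^+$-threads; $2$-threads whose anchors are both $3$-vertices; and a $3$-vertex incident with too many $2$-threads or $1$-threads. In each case we delete the configuration and recolor the $2$-vertices of the thread greedily from one end. When a $2$-vertex is colored, it must avoid the colors of its two neighbors, the colors at distance $2$ along the thread that would destroy a neighbor's unique color, and at most one further color. The last $2$-vertex, adjacent to an already-colored anchor, is the critical step; there we use a small swap argument between the last two thread vertices, which both have $4$-lists.

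The discharging is then routine. Assign charge $d(v)-\frac{18}{7}$ (respectively $d(v)-\frac{10}{3}$) to each vertex, so the total charge is negative. Each $3^+$-vertex sends a fixed amount, $\frac{2}{7}$ (resp.\ $\frac{2}{3}$), to each $2$-vertex on an incident thread, counted per thread vertex. The reducibility lemmas bound the number of such threads and thread vertices, which makes every final charge nonnegative, a contradiction.

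The main obstacle is the reducibility of $2$-threads and of $3$-vertices with two incident $1$-threads in the degree$+2$ setting. Here the slack is a single color, and an anchor may lose its unique color when the thread is recolored. We would first try to fix the anchor's unique color in advance by choosing it among its remaining neighbors, and fall back on recoloring the anchor itself when its list still has a free color.
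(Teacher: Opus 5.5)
The paper only quotes this theorem from \cite{kashima2026degree}, so your plan has to stand on its own. As written, its reducible configurations and its discharging rules do not fit together.

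\textbf{The $(\mathrm{degree}+3)$ part.} A $3$-vertex starts with charge $3-\frac{10}{3}=-\frac13$, but your rules only move charge \emph{away} from $3^+$-vertices (to $2$-vertices). You also claim that $1$- and $2$-vertices are reducible, so a minimal counterexample would have $\delta\ge3$. Then no rule ever fires and every $3$-vertex stays negative. Concretely, every cubic graph has $\operatorname{mad}=3<\frac{10}{3}$ and contains none of your configurations, so nothing in the plan excludes it.

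What is missing is a reducible configuration built from $3$-vertices (for instance a $3$-vertex with two $3^-$-neighbors), together with a rule moving charge from $4^+$-vertices to $3$-vertices. This is exactly where deletion plus greedy extension breaks down. Take a $3$-vertex $v$ with $|L(v)|=6$ whose neighbors are $3$-vertices. In $G-v$ those neighbors are $2$-vertices with two distinctly colored neighbors, so they impose no uniqueness constraint. Yet all three may receive the same color, and then no choice of $\varphi(v)$ gives $v$ a unique color; some recoloring or larger deletion is unavoidable.

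Your $2$-vertex step has the same defect. Recoloring a neighbor $x$ changes the neighborhood of every neighbor of $x$, and the claimed slack is not there. In $S(F)$ with $F$ $9$-regular, $\operatorname{mad}=\frac{36}{11}<\frac{10}{3}$. A new color for the $9$-vertex $x$ must avoid the colors of its $8$ remaining neighbors and, to keep each of these $2$-vertices conflict-free, the colors of their other neighbors. That is up to $16$ forbidden colors against $|L(x)|=12$. On that graph your rule would also leave every $9$-vertex with $\frac{17}{3}-9\cdot\frac23<0$.

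\textbf{The $(\mathrm{degree}+2)$ part.} It fails in the same way at $4^+$-vertices. With charge $d(v)-\frac{18}{7}$ and $\frac27$ sent to every thread vertex, a $k$-vertex incident with $k$ $2$-threads pays $\frac{4k}{7}>k-\frac{18}{7}$ whenever $k\le5$. The $4$-regular graph with every edge subdivided twice has $\operatorname{mad}=\frac{12}{5}<\frac{18}{7}$. It has no $1$-vertex, no $3$-vertex and no $3^+$-thread, so none of your configurations occurs, yet each $4$-vertex ends with $\frac{10}{7}-\frac{16}{7}<0$.

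The configurations needed there are not reducible by the thread-local argument you sketch. Take a $2$-thread $avwb$ with anchors of degree at least $4$ and delete $v,w$. Properness, preserving the anchors' unique colors, and giving $v$ and $w$ two distinct neighbor colors leave the requirements $\varphi(v)\in L(v)\setminus\{\varphi(a),\varphi^*(a),\varphi(b)\}$, $\varphi(w)\in L(w)\setminus\{\varphi(b),\varphi^*(b),\varphi(a)\}$ and $\varphi(v)\neq\varphi(w)$. Both sets can be the same singleton, so no coloring order or swap between $v$ and $w$ helps.

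Likewise, for a $3$-thread $av_1v_2v_3b$ with such anchors, the sets $L(v_1)\setminus\{\varphi(a),\varphi^*(a)\}$, $L(v_2)\setminus\{\varphi(a),\varphi(b)\}$ and $L(v_3)\setminus\{\varphi(b),\varphi^*(b)\}$ must receive three distinct colors. They can all equal the same $2$-set, so your claimed reducibility of $3^+$-threads is not established.

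Handling these cases requires deleting an anchor together with all its threads and arguing about all of its incidences at once, for example by a counting argument in the spirit of Lemma~\ref{lem:pcf-representative-system}. The discharging amounts must also depend on the anchors' degrees. The proposal identifies neither these configurations nor their proofs.
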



\begin{conjecture}[\cite{kashima2026degree}]\label{con1}
Every connected graph other than $C_5$ is proper conflict-free $\kappa_G$-choosable.
\end{conjecture}

 Since  \(\chi_{\mathrm{pcf}}(C_n)=4\) when  \(3\nmid n\) and \(n\neq5\) and \(\chi_{\mathrm{pcf}}(S(K_n))=n=\Delta(S(K_n))+1\) for every \(n\ge3\)~\cite{caro2023remarks}, the function \(\kappa_G\) is sharp. Note that the prescribed list size at a \(1\)-vertex is only \(2\). Consequently, \textbf{Conjecture~\ref{con1}} remains highly nontrivial even for sparse graphs.
 Furthermore, Kashima, \v{S}krekovski, and Xu~\cite{kashima2026proper} constructed infinitely many connected outerplanar graphs that are not proper conflict-free $(\mathrm{degree}+1)$-choosable.   
 Therefore, the exceptional value \(4\) at a \(2\)-vertex and the value \(d_G(v)+1\) at a non-$2$-vertex $v$ are both best possible.

In this paper, we confirm \textbf{Conjecture~\ref{con1}} in two classes of graphs. Our first result concerns graphs with bounded maximum average degree, which readily yield the corresponding result for planar graphs.

\begin{theorem}\label{th1}
Let \(G\) be a connected graph other than \(C_5\). If \(\operatorname{mad}(G)<\frac{12}{5}\), then \(G\) is proper conflict-free \(\kappa_G\)-choosable.
\end{theorem}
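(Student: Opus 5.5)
We argue by the standard minimal-counterexample plus discharging scheme. Let \(G\) be a counterexample minimizing \(|V(G)|+|E(G)|\), with a \(\kappa_G\)-list assignment \(L\) admitting no PCF \(L\)-coloring. Since \(\operatorname{mad}(G)<\frac{12}{5}<\frac{18}{7}\), the second part of Theorem~\ref{kashima-sparse} already gives proper conflict-free \((\mathrm{degree}+2)\)-choosability. The lists \(\kappa_G\) differ from degree\(+2\) only at vertices of degree \(\ne 2\), where they are one smaller (e.g.\ size \(2\) at a \(1\)-vertex, \(4\) at a \(3\)-vertex). So the real content lies at \(1\)-vertices and \(3^+\)-vertices, and the reducible configurations must be chosen so that every recoloring step respects these tighter lists.

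\textbf{Reducible configurations.} Removing a vertex or contracting a thread changes degrees, hence changes \(\kappa\). To keep the induction clean, we will not work with \(\kappa_G\) directly on subgraphs. Instead we use an auxiliary graph \(G'\) built from \(G\) by deleting a small configuration and, where needed, attaching pendant vertices or short threads at the boundary vertices. These attachments keep their degrees, and hence their \(\kappa\)-values, unchanged, and they encode the constraint ``the boundary vertex must still see a unique color.'' We must check that \(\operatorname{mad}(G')<\frac{12}{5}\) and \(G'\ne C_5\), which is the purpose of the mad-preserving auxiliary construction. The coloring of \(G'\) is then pulled back, and the deleted part is extended via a system of proper conflict-free representatives. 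This is a Hall-type choice: for each uncolored vertex we pick a color from its list that is proper and gives every affected vertex a unique color, and we count forbidden colors against the list size. The configurations we aim to reduce are:
\begin{itemize}
\item[(C1)] a \(1\)-vertex adjacent to a \(2\)-vertex, or to a \(3\)-vertex having another \(1\)- or \(2\)-neighbor;
\item[(C2)] a \(3^+\)-thread, and a \(2\)-thread whose anchor has small degree;
\item[(C3)] a \(3\)-vertex incident with too many \(2^+\)-threads, or a \(k\)-vertex with \(n_1(v)+n_{2^+\text{-thread}}(v)\) too large relative to \(k\).
\end{itemize}
Each is handled by precoloring \(G-X\) by minimality and greedily extending along \(X\). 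Vertices farthest from the boundary are colored last because they have the most slack: a \(2\)-vertex with \(4\) colors loses at most \(2\) to properness and \(1\) to its neighbor's unique-color requirement.

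\textbf{Discharging.} Give each vertex charge \(d(v)-\frac{12}{5}\), so the total charge is negative. Each \(1\)-vertex needs \(\frac75\) and each \(2\)-vertex needs \(\frac25\). We let each \(3^+\)-vertex send \(\frac25\) along each incident thread per \(2\)-vertex, splitting a \(2\)-thread's total of \(\frac45\) as \(\frac25\) from each anchor, and send \(\frac75\) to each pendant neighbor. The forbidden configurations then guarantee that a \(3\)-vertex (charge \(\frac35\)) gives at most \(\frac35\) in total. For example, it has at most one \(1\)-thread plus \(1\)-threads of limited number, or one \(2\)-thread end, and no \(1\)-neighbor unless its other neighbors are \(3^+\); in the last case the pendant must be reduced. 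A \(k\)-vertex with \(k\ge4\) similarly stays nonnegative. This contradiction proves the theorem.

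\textbf{Main obstacle.} The hardest step is the reducibility of configurations around \(1\)-vertices and \(3\)-vertices, where the list sizes \(2\) and \(4\) leave essentially no slack. Because the lists are tight, the discharging requires \(3\)-vertices to be adjacent to only \(3^+\)-vertices or single \(2\)-vertices, and proving that requires delicate recoloring. My first attempt would be to replace a deleted pendant or thread by a gadget in \(G'\) that forces the anchor's unique color to be preserved, such as a pendant path with an appropriate list, and then verify via the representative system that the extension always exists. A separate small check handles the case where \(G'\) becomes \(C_5\); this is presumably why some configurations must be treated by hand.
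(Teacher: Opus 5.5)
There is a genuine gap, and it is at the pendant vertices, the place you yourself flagged as the real content. Under your rules every $1$-vertex takes $\frac75$ from its neighbor. A $k$-vertex with $k-2$ pendant neighbors therefore ends with at most $k-\frac{12}{5}-\frac75(k-2)=\frac{2-2k}{5}$, which is $-\frac65$ for $k=4$. Even a $5$-vertex with two pendant neighbors and three $3^+$-neighbors ends at $-\frac15$.

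So your claim that every $k$-vertex with $k\ge4$ stays nonnegative needs these configurations to be reducible, and they are not reducible by extending a precoloring. Take a $4$-vertex $u$ with pendant neighbors $y_1,y_2$ and other neighbors $x_1,x_2$ of degree at least $3$, and precolor $G-\{u,y_1,y_2\}$.
\begin{itemize}
\item The vertex $u$ must avoid $\varphi(x_1),\varphi(x_2),\varphi^*(x_1),\varphi^*(x_2)$. This can leave a single color $\alpha$ of its $5$-list.
\item If $L(y_i)=\{\alpha,\varphi(x_i)\}$, then $N(u)$ sees $\varphi(x_1),\varphi(x_2),\varphi(x_1),\varphi(x_2)$ and has no unique color.
\end{itemize}
What can actually be proved is only $n_1(v)+n_{2^+\text{-thread}}(v)\le k-2$ (Lemma~\ref{lem:structure-4}), so such vertices with $k-2$ pendants survive. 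You cannot reimburse them from their neighbors either: a $3$-vertex, with only $\frac35$ to spare, may be adjacent to three of them.

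The missing idea is that the global inequality $\sum_v\bigl(d(v)-\frac{12}{5}\bigr)<0$ on $G$ is too weak when there are pendant vertices. Let $X$ be the set of $1$-vertices. Then $G-X$ is a subgraph, so its average degree is also below $\frac{12}{5}$. This is equivalent to $\sum_v\bigl(d(v)-\frac{12}{5}\bigr)<-\frac25|X|$, so each pendant vertex should cost its anchor only $1$, not $\frac75$.

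The paper encodes this with the $5$-leaf-ring graph $H_5(G)$. The pendant neighbors of each vertex are joined cyclically by paths of length $5$, or a $5$-cycle is hung on a lone pendant. Lemma~\ref{lem:leaf-ring-mad} shows $\operatorname{mad}(H_5(G))<\frac{12}{5}$. In $H_5(G)$ a former leaf is a $3$-vertex that receives $1$ from its anchor and passes $\frac85$ to the ring $2$-vertices.

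Even then, a vertex with $k-2$ pendants (a ``bad'' vertex) keeps only $2$ and needs $\frac25$ more. This is supplied by rule (R4), where each good core neighbor sends $\frac15$. That rule requires further reducibility lemmas: bad vertices are not incident with $1$-threads and are pairwise nonadjacent (Lemmas~\ref{lem:structure-5} and~\ref{lem:structure-6}). These rely on the two-color flexibility of Lemma~\ref{lem:bad-two-color-extension}.

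Two smaller points. First, your gadgets for preserving $\kappa$ are unnecessary. Since $\kappa$ is nondecreasing in degree, every component $F$ of $G-S$ automatically inherits a $\kappa_F$-list, and only exceptional $C_5$-components need care (Lemma~\ref{lem:c5-characterization}). Second, your claim that all $3^+$-threads are reducible is unsupported. The paper reduces only $4^+$-threads; $3$-threads survive and are paid for in the discharging (Corollary~\ref{cor:final-structure}).
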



\begin{corollary}
Every planar graph with girth at least \(12\) is proper conflict-free \(\kappa_G\)-choosable.
\end{corollary}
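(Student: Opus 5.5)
The corollary follows from Theorem~\ref{th1} by a one-line density computation, together with a check on connectivity and the excluded graph $C_5$. I take the statement to mean, as the conjecture does, that $G$ is connected and $G \neq C_5$; a girth-$12$ graph is never $C_5$ anyway. If one wants disconnected graphs too, it suffices to colour each component separately. A PCF $L$-colouring of each component combines into one of $G$, since neighbourhoods lie inside components and $\kappa_G$ restricted to a component $H$ equals $\kappa_H$. Isolated vertices impose no conflict-free condition, and their list size $1$ suffices.

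The main step is to show $\operatorname{mad}(G) < \frac{12}{5}$. Let $H \subseteq G$ be any subgraph with $V(H) \neq \emptyset$; it is planar with girth at least $12$. If $H$ is a forest, then $|E(H)| \le |V(H)| - 1$, so its average degree is below $2 < \frac{12}{5}$. Otherwise, take a plane embedding of $H$ with $n$ vertices, $m$ edges and $f$ faces. Each face has boundary walk length at least $g = 12$ and each edge is counted at most twice, so $12f \le 2m$. Applying Euler's formula to each component gives $n - m + f \ge 2$. Together these yield
\[
m \le \frac{g}{g-2}(n-2) < \frac{g}{g-2}\,n,
\]
so $\frac{2m}{n} < \frac{2g}{g-2} = \frac{24}{10} = \frac{12}{5}$. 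This is exactly the well-known bound recalled before Theorem~\ref{thm:cho-pcf}.

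Since every subgraph has average degree below $\frac{12}{5}$, we get $\operatorname{mad}(G) < \frac{12}{5}$. Theorem~\ref{th1} then applies and gives that $G$ is proper conflict-free $\kappa_G$-choosable. The only point needing care is the face-length count for faces whose boundaries are not cycles, such as faces touching bridges. Every non-forest face boundary contains a cycle and so has length at least $12$, and in any case the forest case was handled separately.
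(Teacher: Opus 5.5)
Your proposal is correct and is essentially the paper's argument. The paper invokes the standard Euler-formula bound $\operatorname{mad}(G)<\frac{2g}{g-2}=\frac{12}{5}$ for planar graphs of girth at least $12$ and then applies Theorem~\ref{th1}; your componentwise treatment of disconnected graphs and your remark that girth at least $12$ excludes $C_5$ make explicit details the paper leaves implicit.
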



Compared with \textbf{Theorem~\ref{t1}} and the second assertion of \textbf{Theorem~\ref{kashima-sparse}}, this replaces the $(\mathrm{degree}+2)$ prescription by the sharper function $\kappa_G$. In particular, the required list size decreases from $3$ to $2$ at $1$-vertices and from $d_G(v)+2$ to $d_G(v)+1$ at every $3^+$-vertex, while the sharp value $4$ at $2$-vertices is retained. For clarity, we summarize the above results on proper conflict-free choosability of connected graphs under bounds on the maximum average degree in Table~\ref{tab:degree-plus-mad}.

\begin{table}[htbp]
\centering
\caption{Known results on proper conflict-free choosability of connected graphs under bounds on the maximum average degree.}
\label{tab:degree-plus-mad}
\begin{tabular}{|c|c|c|}
\hline
Condition & Conclusion & Reference\\
\hline
$\operatorname{mad}(G)<k$, $k\in\mathbb N$, $k\ge4$
& proper conflict-free $(\mathrm{degree}+k)$-choosable
& \textbf{Observation~\ref{obs:mad-degree-plus}}\\
\hline
$\operatorname{mad}(G)<\frac{10}{3}$
& proper conflict-free $(\mathrm{degree}+3)$-choosable
& \textbf{Theorem~\ref{kashima-sparse}}\\
\hline
$\operatorname{mad}(G)<\frac{18}{7}$, $G\neq C_5$
& proper conflict-free $(\mathrm{degree}+2)$-choosable
& \textbf{Theorem~\ref{kashima-sparse}}\\
\hline
$\operatorname{mad}(G)<\frac{12}{5}$, $G\neq C_5$
& proper conflict-free $\kappa_G$-choosable
& \textbf{Theorem~\ref{th1}}\\
\hline
$\operatorname{mad}(G)<2$
& proper conflict-free $(\mathrm{degree}+1)$-choosable
& \cite{kashima2026degeneracy}\\
\hline
\end{tabular}
\end{table}

Our second result concerns subcubic graphs, which improves \textbf{Theorem~\ref{sub}} at both $1$-vertices and $3$-vertices.

\begin{theorem}\label{th2}
Every connected subcubic graph other than \(C_5\) is proper conflict-free \(\kappa_G\)-\allowbreak choosable.
\end{theorem}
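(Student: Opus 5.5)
We argue by minimal counterexample. Let \(G\) be a connected subcubic graph other than \(C_5\), together with a \(\kappa_G\)-list assignment \(L\), such that \(G\) has no proper conflict-free \(L\)-coloring and \(|V(G)|+|E(G)|\) is minimum. The lists are then \(|L(v)|\ge 2,4,4\) for \(d(v)=1,2,3\). Cycles are handled directly: for \(C_n\) with \(n\neq5\) and all lists of size \(4\), the list version of \(\chi_{\mathrm{pcf}}(C_n)\le4\) is known, and we will reprove it by a short path-coloring argument. So we may assume \(\Delta(G)=3\). The reduction pattern is always the same. We delete a small configuration \(H\) (a pendant vertex, a thread, or a \(3\)-vertex together with its neighbors) and colour each component of \(G-H\) by minimality. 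If a component happens to be \(C_5\), we handle it separately; one option is to note that \(C_5\) with lists of size \(4\) fails only for specific list structures. Then we extend the colouring to \(H\).

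The extension uses what the abstract calls a system of proper conflict-free representatives. Every coloured vertex \(u\) adjacent to \(H\) already has a unique colour in \(G-H\), and it keeps one after \(H\) is coloured provided the new neighbours avoid it. So each boundary vertex \(u\) forbids at most \(2\) colours to its neighbour in \(H\): its own colour, by properness, and one representative unique colour. Every vertex of \(H\) must also receive a unique colour of its own. The key steps are these.

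\textbf{(1) Degree-\(1\) vertices.} Take a leaf \(v\) with \(|L(v)|=2\) and neighbour \(w\). If \(d(w)=3\), we delete \(v\), which lowers \(\kappa\) at \(w\) from \(4\) to \(3\). So we must colour \(G-v\) with \(w\) treated as a \(2\)-vertex, and there \(\kappa\) is \(4\) again; we therefore have to add an artificial list element or argue directly. The alternative is to delete \(v\) together with \(w\) and recolour. For a \(1\)-vertex \(v\), its unique colour is automatic. The issue is rather that \(v\) must avoid \(\varphi(w)\) and also the colour that \(v\) would otherwise duplicate in \(N(w)\). With \(|L(v)|=2\) this is tight, so \(w\)'s unique colour must be certified by another neighbour. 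I expect a short case analysis: \(w\) with two leaves, or one leaf and two other neighbours.

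\textbf{(2) Threads.} We show there are no long threads, and more generally we bound the \(2\)-vertices near each \(3\)-vertex. Colour \(G-Q\) for a thread \(Q\) and extend greedily along \(Q\). Each internal \(2\)-vertex has \(4\) colours. It loses the colour of its predecessor, the colour two steps back (so that the predecessor gets a unique colour, namely distinctness of its two neighbours), and possibly the anchor's representative. That leaves at least one choice, with slack at the end. The last vertex is constrained from both sides, and this is exactly where \(C_5\)-like parity obstructions arise. We handle them by choosing the first vertex among two options and arguing that both cannot fail.

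\textbf{(3) \(3\)-vertices with all neighbours of degree \(3\).} If every reducible configuration is excluded, the graph is cubic or nearly so. Then we need a direct argument. We delete a \(3\)-vertex \(v\) and its neighbours are left with list-size slack, since each neighbour drops to degree \(\le2\) but keeps its list of \(4\). We colour \(G-v\) by minimality, taking care with \(C_5\) components and the changed \(\kappa\). Then \(v\) has \(4\) colours and must avoid its \(3\) neighbours' colours and their representatives, which is too many. So instead we uncolour a neighbour \(u\) as well and colour \(u,v\) together. We choose \(v\)'s colour first to make the colour of \(u\) unique in \(N(v)\), using that \(|L(u)|=4\).

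The main obstacle is step (3) together with the loss of minimality caused by degree changes: deleting vertices turns \(3\)-vertices into \(2\)-vertices, whose required list size stays \(4\), so there is no saving there. I would first try to strengthen the induction hypothesis to allow precoloured or "list-size \(\ge\kappa\) of the original graph" vertices, which \(G-H\) satisfies automatically, and prove the statement for graphs whose lists satisfy \(|L(v)|\ge\kappa_{G'}(v)\) for some supergraph degree. If that fails, I would fall back on Theorem~\ref{sub}-style arguments from \cite{kashima2025results}, adapted to lists one smaller at \(1\)- and \(3\)-vertices.
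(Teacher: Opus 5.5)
Your proposal has a genuine gap: it never proves the case that carries all the difficulty. Once leaves are removed, every vertex has degree $2$ or $3$ and every list has size exactly $4$. What remains is precisely the claim that every connected subcubic graph with $\delta\ge2$, other than $C_5$, is PCF $4$-choosable. Step (3) is meant to settle this, and you yourself concede it is unresolved.

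The repair you sketch fails as stated. Suppose that after colouring $G-v$ all three neighbours of $v$ received the same colour $\alpha$, and you recolour a neighbour $u$ with some $\beta\neq\alpha$. For each other neighbour $a_i$ of $u$, the new colour must avoid $\varphi(a_i)$. It must also avoid the colour of $a_i$'s two remaining neighbours when these coincide, because then $\varphi(u)$ is $a_i$'s only unique colour. So up to four colours of $L(u)$ are forbidden, and $u$ can be frozen at $\alpha$. Nothing in your argument prevents this at all three neighbours simultaneously. The two fallbacks in your last paragraph are intentions, not arguments.

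Your bookkeeping also overcounts. Outside exceptional $C_5$ components, a vertex of degree $2$ in the remaining graph has two distinctly coloured neighbours. It therefore keeps a unique colour whatever its third neighbour receives, and it imposes only properness. In the generic situation the only obstruction at $v$ is a monochromatic neighbourhood, not the representatives.

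For the same reason, step (1) is simpler than you think. Deleting a leaf $v$ whose neighbour $w$ has degree $3$ changes nothing, since $|L(w)|=4=\kappa_{G-v}(w)$. Then $v$ only has to avoid $\varphi(w)$; the $C_5$ case is handled by making the repeated colour around $w$ lie outside $L(v)$.

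The paper handles the $\delta\ge2$ case with an external theorem, Lemma~\ref{lem:liu-yu} of Liu and Yu. Every connected subcubic graph other than $C_5$ and $K_{3,3}$ has a \emph{superlinear} $L$-colouring from any lists of size at least $4$. Such a colouring is proper, any two colour classes induce a linear forest, and the two neighbours of each $2$-vertex get distinct colours. When $\Delta\le3$ it is automatically PCF, since a $3$-vertex whose three neighbours share one colour would span a $K_{1,3}$ in a two-colour subgraph.

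The paper's proof then runs as follows. In a minimal counterexample, Lemma~\ref{lem:leaf-neighbor} forces every $1$-vertex to have a $4^+$-neighbour, which is impossible when $\Delta\le3$. Hence $\delta(G)\ge2$ and all lists have size $4$. $K_{3,3}$ is checked directly (Lemma~\ref{k33}), and every other graph is covered by Liu--Yu. Your thread analysis in step (2) is unnecessary on this route.

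If you want to avoid citing Liu--Yu, you need a genuinely new reduction for $\delta\ge2$ with $4$-lists. One candidate is deleting both ends of an edge, so that the observation above turns most boundary conditions into mere properness. That would still need a complete treatment of $C_5$ components, nearby $2$-vertices and short cycles. Nothing of that kind is in the proposal.
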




The remainder of this paper is organized as follows. In \textbf{Section~\ref{s:auxiliary}}, we introduce the concept of a proper conflict-free representative system and the construction of $r$-leaf-ring graphs, together with several auxiliary lemmas. 
\textbf{Section~\ref{s:structural}} identifies the reducible configurations to be used in subsequent arguments. \textbf{Section~\ref{s2}} gives the discharging‑based proof of \textbf{Theorem~\ref{th1}}. \textbf{Section~\ref{s3}} concludes with the proof of \textbf{Theorem~\ref{th2}}.

\section{Auxiliary Lemmas}\label{s:auxiliary}

Let $X=(x_1,\ldots,x_t)$ be a finite sequence, possibly empty. For a
color $\alpha$, let \(\nu_X(\alpha)=|\{j\in[t]:x_j=\alpha\}|\)
denote the multiplicity of $\alpha$ in $X$, and let \(\operatorname{supp}(X) =\{\alpha:\nu_X(\alpha)\ge1\}\)
denote the set of colors appearing in $X$. In the present sequence
setting, a color $\alpha$ is called a \emph{unique color of $X$} if
$\nu_X(\alpha)=1$. In particular, if $X$ is the sequence of colors
appearing in the neighborhood of a vertex $v$, then a unique color
of $X$ is exactly a unique color of $v$ in the sense defined in
\textbf{Section~\ref{s1}}. For two finite sequences $X$ and $Y$, we
denote their concatenation by $X\circ Y$.

\begin{definition}\label{def:pcf-representative}
Let $\mathcal{A}=(A_1,\ldots,A_k)$ be a sequence of nonempty sets,
and let $B=(b_1,\ldots,b_\ell)$ be a finite sequence, possibly empty.
A sequence $(a_1,\ldots,a_k)$ is called a \textbf{\textnormal{\textbf{PCF}}-representative system} for $\mathcal{A}$ with respect to $B$ if $a_i\in A_i$ for every $i\in[k]$ and the concatenated
sequence
\[
(a_1,\ldots,a_k)\circ B
=(a_1,\ldots,a_k,b_1,\ldots,b_\ell)
\]
has a unique color.
\end{definition}

The following lemma gives an exact criterion, in terms of prescribed
lower bounds on the sizes of the sets, for the guaranteed existence of
a \textnormal{PCF}-representative system.

\begin{lemma}\label{lem:pcf-representative-system}
Let $k$ be a positive integer, let $m_1,\ldots,m_k$ be positive
integers, and let $B=(b_1,\ldots,b_\ell)$ be a finite sequence,
possibly empty. Define $s=|\{\alpha:\nu_B(\alpha)=1\}|$, $r=|\{\alpha:\nu_B(\alpha)\ge2\}|$,
and let $q=|\{i\in[k]:m_i=1\}|$, $D=\max_{i\in[k]}m_i$.
Then the following two statements are equivalent.

\begin{itemize}
    \item[\textnormal{\textbf{(1)}}]
    For every sequence
    $\mathcal{A}=(A_1,\ldots,A_k)$ of nonempty finite sets satisfying
    $|A_i|\ge m_i$ for every $i\in[k]$, there exists a \textnormal{PCF}-representative system for $\mathcal{A}$ with
    respect to $B$.

    \item[\textnormal{\textbf{(2)}}]
    Either $q<s$, or
    \[
    q\ge s
    \quad\text{and}\quad
    D>
    r+s+\left\lfloor\frac{q-s}{2}\right\rfloor.
    \]
\end{itemize}
\end{lemma}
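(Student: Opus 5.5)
The plan is to prove both directions by working with the multiset $M=(c_1,\ldots,c_q)\circ B$, where $c_1,\ldots,c_q$ are the representatives chosen from the sets $A_i$ with $m_i=1$. In (1) such a set may be larger than one element; we simply fix an arbitrary element of it, and all bounds below hold for arbitrary values of the $c_i$. The rest of the argument uses one simple \emph{protection principle}. If at some stage the partial sequence has a unique color $c$, then we can choose every remaining representative from a set of size $\ge2$ and different from $c$, and $c$ stays unique to the end.

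\textbf{(2)$\Rightarrow$(1).} Suppose first $q<s$. The $q$ forced colors hit at most $q$ of the $s$ colors that are unique in $B$, so some $B$-unique color survives as a unique color of $M$. Every remaining set has $|A_i|\ge m_i\ge2$, so the protection principle finishes the case.

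Now suppose $q\ge s$ and $D>r+s+\lfloor (q-s)/2\rfloor$.
\begin{itemize}
\item If $M$ already has a unique color, we protect it as before. If $D=1$, the hypothesis forces $r=s=0$ and $q\le1$; then $k=q=1$ and $B$ is empty, which is trivial. So we may assume the non-forced sets have size $\ge2$.
\item Otherwise every color of $M$ has multiplicity $\ge2$, and we bound the support. $B$ contributes $r+s$ colors. Each $B$-unique color must be hit by at least one forced color, which uses at least $s$ of the $c_i$. The remaining at most $q-s$ forced colors can create only new colors of multiplicity $\ge2$, so there are at most $\lfloor (q-s)/2\rfloor$ of these. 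Hence $|\operatorname{supp}(M)|\le r+s+\lfloor (q-s)/2\rfloor<D$.
\item Fix $j$ with $m_j=D$ and process the other non-forced sets one at a time. If a choice ever produces a unique color, protect it from then on. If not, each choice must lie in the current support, so the support stays $\operatorname{supp}(M)$.
\item Finally $|A_j|\ge D>|\operatorname{supp}(M)|$, so we pick $a_j$ outside the support, and it becomes a unique color.
\end{itemize}

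\textbf{(1)$\Rightarrow$(2).} We argue by contrapositive: assume $q\ge s$ and $D\le N:=r+s+p$ with $p=\lfloor (q-s)/2\rfloor$, and build a bad family.
\begin{itemize}
\item Take the forced sets to be singletons. Assign $s$ of them to the $s$ unique colors of $B$, and assign the next $2p$ in pairs to $p$ new colors.
\item If $q-s$ is odd, put the leftover singleton on an existing color so that it reaches multiplicity $3$. If $s=r=p=0$ there is no existing color; then $q=1$ and $D\le0$, which is impossible.
\item Now every color of $M$ has multiplicity $\ge2$ and $|\operatorname{supp}(M)|=N$.
\item For each $i$ with $m_i\ge2$, let $A_i$ be any $m_i$-subset of $\operatorname{supp}(M)$; this exists since $m_i\le D\le N$.
\end{itemize}
Every choice of representatives only raises multiplicities of colors that are already at $\ge2$. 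So no unique color ever appears, and (1) fails.

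The main point needing care is the support bound $|\operatorname{supp}(M)|\le r+s+\lfloor (q-s)/2\rfloor$ in the sufficiency direction, together with the parity and degenerate bookkeeping in the extremal construction. These are where the exact floor term comes from. The rest is the greedy protection argument.
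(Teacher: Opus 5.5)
Your proof is correct and follows the paper's argument essentially step for step. For the contrapositive you use the same extremal construction: singletons on the $s$ unique colors of $B$, pairs on $\lfloor (q-s)/2\rfloor$ new colors, any leftover singleton on an existing color, and the larger sets taken inside the support. For sufficiency you use the same support bound $|\operatorname{supp}(M)|<D$ and draw a fresh color from a largest set.

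The only difference is cosmetic. You choose $a_j$ last, protecting any unique color created earlier, whereas the paper chooses $a_j\notin\operatorname{supp}(X)$ first and makes every other representative avoid it. Note also that your sentence ``so we may assume the non-forced sets have size $\ge2$'' should say that $D\ge2$, so that the index $j$ with $m_j=D$ is not forced; non-forced sets have size at least $2$ by definition.
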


\begin{proof}
Let \(I=\{i\in[k]:m_i=1\}\),
so $|I|=q$. Let $i_1,\ldots,i_q$ be an ordering of the elements of $I$ (with the empty ordering when $q=0$).

We first show that \textbf{(1)}$\implies$\textbf{(2)}, by contrapositive. Assume \textbf{(2)} fails. Then
\[
q\ge s
\quad\text{and}\quad
D\le
r+s+\left\lfloor\frac{q-s}{2}\right\rfloor.
\]
Let \(t=\left\lfloor\frac{q-s}{2}\right\rfloor\).
Denote by $\sigma_1,\ldots,\sigma_s$ the distinct colors appearing
exactly once in $B$. Select $t$ pairwise-distinct new colors
$\tau_1,\ldots,\tau_t$ not contained in $B$. Let \(P=\operatorname{supp}(B) \cup\{\tau_1,\ldots,\tau_t\}\).
Thus \(|P|=r+s+t\ge D\).
In particular, $P\neq\emptyset$, since $D\ge1$.

We assign sets to indices in $I$ as follows.
For each $j\in[s]$, set \(A_{i_j}=\{\sigma_j\}\).
For each $h\in[t]$, set \(A_{i_{s+2h-1}}=A_{i_{s+2h}}=\{\tau_h\}\).
This consumes $s+2t$ indices from $I$. If $q-s$ is even,
then $s+2t=q$, so all indices in $I$ have been assigned. If
$q-s$ is odd, then $s+2t=q-1$; pick an arbitrary
color $\rho\in P$ and set \(A_{i_q}=\{\rho\}\).
For every $i\notin I$, choose $A_i$ to be an arbitrary
$m_i$-subset of $P$. This choice is possible since
$m_i\le D\le|P|$. Consequently, \(|A_i|=m_i\) for every $i\in[k]$.

Take arbitrary representatives $a_i\in A_i$ for every $i\in[k]$. Each color
$\sigma_j$ appears once in $B$ and must appear at least
once among $a_1,\ldots,a_k$, giving its total multiplicity at least two in the concatenation \((a_1,\ldots,a_k)\circ B\). Any color already occurring at least twice within $B$ trivially has multiplicity \(\ge 2\), while each new color $\tau_h$ is guaranteed to show up at least twice among the chosen representatives.
Moreover, all representatives belong to $P$. Hence every color appearing in
$(a_1,\ldots,a_k)\circ B$ appears at least twice. Therefore, this
sequence has no unique color. 

The constructed set‑sequence 
$\mathcal{A}=(A_1,\ldots,A_k)$ therefore possesses no \textnormal{PCF}-representative system with respect to $B$. This shows 
\textbf{(1)} fails, which establishes the contrapositive and yields the implication $(1)\implies(2)$.

We next show that \textbf{(2)}$\implies$\textbf{(1)}. Let
$\mathcal{A}=(A_1,\ldots,A_k)$ be an arbitrary sequence of nonempty
finite sets satisfying \(|A_i|\ge m_i\) for every $i\in[k]$.
For every $i\in I$, choose an arbitrary color
$a_i\in A_i$, and let \(A=(a_{i_1},\ldots,a_{i_q})\)
be the sequence of these chosen colors.

Suppose first that $q<s$. There are $s$ distinct colors appearing
exactly once in $B$, while the sequence $A$ has only $q$ entries.
Consequently, at least one of these $s$ colors does not appear in
$A$. Choose such a color and denote it by $\gamma$. Thus
$\nu_B(\gamma)=1$ and $\nu_A(\gamma)=0$.
For every $i\notin I$, we have $|A_i|\ge m_i\ge2$. We may therefore choose \(a_i\in A_i\setminus\{\gamma\}\).
The color $\gamma$ appears exactly once in
$(a_1,\ldots,a_k)\circ B$. 
Hence $(a_1,\ldots,a_k)$ is a \textnormal{PCF}-representative system for $\mathcal{A}$ with respect to $B$.

It remains to consider the case $q\ge s$ and $D> r+s+\left\lfloor\frac{q-s}{2}\right\rfloor$.
Let \(X=A\circ B\).
If $X$ has a unique color, say $\gamma$, then for every $i\notin I$, choose \(a_i\in A_i\setminus\{\gamma\}\). Such a choice is possible since $|A_i|\ge2$. None of the additional representatives receives color $\gamma$, so $\gamma$ remains a unique color of $(a_1,\ldots,a_k)\circ B$. 
We therefore obtain a required \textnormal{PCF}-representative system. 

We may consequently assume that $X$ has no unique color. Every color
appearing exactly once in $B$ must then appear at least once in
$A$, since otherwise it would remain a unique color of $X$. Thus
at least $s$ entries of $A$ are used by the $s$ unique colors of
$B$.

Furthermore, every color in
$\operatorname{supp}(A)\setminus\operatorname{supp}(B)$ must appear
at least twice in $A$, since it does not appear in $B$ and is not
a unique color of $X$. After reserving at least $s$ entries of
$A$ for the unique colors of $B$, at most $q-s$ entries remain.
It follows that
$
|\operatorname{supp}(A)\setminus\operatorname{supp}(B)|
\le
\left\lfloor\frac{q-s}{2}\right\rfloor.
$
Since $|\operatorname{supp}(B)|=r+s$, we obtain
\[
|\operatorname{supp}(X)|
\le
r+s+\left\lfloor\frac{q-s}{2}\right\rfloor
<D.
\]
Choose $j\in[k]$ such that $m_j=D$. We claim that $j\notin I$, i.e., $D\ge2$.
Otherwise, assume that $D=1$. Then $m_i=1$ for every $i\in[k]$, so
$q=k$. Since $1=D>
r+s+\left\lfloor\frac{q-s}{2}\right\rfloor$, 
the nonnegative integer on the right-hand side must be zero. Hence
$r=s=0$ and $q=1$. In this case, $X=A$ consists of exactly one
color and therefore has a unique color, contrary to our present
assumption. 

Since \(|A_j|\ge D>|\operatorname{supp}(X)|\), there exists a color
\(\alpha\in A_j\setminus\operatorname{supp}(X)\).
Set $a_j=\alpha$. For every
$i\in[k]\setminus(I\cup\{j\})$, we have $|A_i|\ge2$, so choose \(a_i\in A_i\setminus\{\alpha\}\).
The color $\alpha$ does not appear in $X$, and among the remaining
representatives it is used only for $A_j$. Therefore, $\alpha$
appears exactly once in
$(a_1,\ldots,a_k)\circ B$. Hence
$(a_1,\ldots,a_k)$ is a \textnormal{PCF}-representative system for $\mathcal{A}$ with respect to $B$.

From the above arguments, we always find a required \textnormal{PCF}-representative system. This proves \(\textbf{(2)}\Longrightarrow\textbf{(1)}\) and completes the proof.
\end{proof}

In the arguments below, when considering the colors of the neighborhood of a vertex $u$, $\mathcal A$ will usually consist of
the residual allowable-color sets of uncolored neighbors of $u$. These sets are defined after excluding the colors needed for
properness and the prescribed $\varphi^*$-colors which guarantee the existence of a unique color for the relevant vertices other than $u$ under the current partial coloring $\varphi$. Thus a
\textnormal{PCF}-representative system for $\mathcal A$ gives precisely the required choice of colors around $u$, while some color occurs exactly once in $N_G(u)$. In the remainder of the discussion, we shall apply \textbf{Lemma~\ref{lem:pcf-representative-system}} to verify the existence of the desired coloring, rather than analyzing concrete coloring configurations.

In the proof of \textbf{Theorem~\ref{th1}}, we also use the discharging method. To avoid troubles arising from 1‑vertices in the minimal counterexample $G$, we construct an auxiliary graph of $G$ preserving the constraint imposed by $\operatorname{mad}(G)$. 

Fix an integer $r\ge4$. Let $G$ be a graph with no component isomorphic to $K_2$, set
\[
X=\{x\in V(G):d_G(x)=1\},
\qquad
K=G-X,
\]
and, for every $v\in V(K)$ having a neighbor in $X$, write
\(N_G(v)\cap X=\{x_1,\ldots,x_{n_1(v)}\}\).

\begin{definition}\label{def:leaf-ring}
The graph $H_r(G)$ is called the \emph{$r$-leaf-ring graph of
$G$} if it is obtained from $G$ by performing the following operation
for every $v\in V(K)$ with $n_1(v)\ge1$.
\begin{itemize}
    \item If $n_1(v)\ge2$, then for each $i\in[n_1(v)]$ add an
    $x_i$--$x_{i+1}$ path $P_{v,i}$ of length $r$, where
    $x_{n_1(v)+1}=x_1$,  \(V(P_{v,i})\cap V(G)=\{x_i,x_{i+1}\}\) and  $P_{v,1},\ldots,P_{v,n_1(v)}$ are internally
    vertex-disjoint.

    \item If $n_1(v)=1$, then add a cycle $C_v$ of length $r$
    satisfying
    \(V(C_v)\cap V(G)=\{x_1\}\).
\end{itemize}
No vertex of $V(H_r(G))\setminus V(G)$ belongs to two distinct
paths or cycles added newly in the construction.
\end{definition}

For $v\in V(K)$ with $\ell=n_1(v)\ge1$, let $R_v$ denote the
subgraph consisting of $v$, its $\ell$ leaf-neighbors in $G$, and
all vertices and edges added for those leaf-neighbors. We call $R_v$
the \emph{$r$-leaf-ring at $v$}. In either case,
$R_v-v$ is a cycle; we denote it by $C_v$. $C_v$ has $r\ell$ vertices
and $r\ell$ edges. 
Moreover, every vertex of $X$ has degree $3$ in $H_r(G)$, every
new vertex has degree $2$, and every vertex of $K$ has the same
degree in $H_r(G)$ as in $G$.

\begin{figure}[htbp]
\centering
\begin{subfigure}[t]{0.61\textwidth}
\centering
\begin{tikzpicture}[scale=0.78]
\tikzset{
old vertex/.style={draw,shape=circle,fill=black,inner sep=1.55pt},
new vertex/.style={draw,shape=circle,fill=white,inner sep=1.55pt}
}
\node[old vertex,label=below:$v$] (v) at (0,0) {};
\node[old vertex,label=above:$x_1$] (x1) at (0,3.0) {};
\node[old vertex,label=below right:$x_2$] (x2) at (3.0,-1.5) {};
\node[old vertex,label=below left:$x_3$] (x3) at (-3.0,-1.5) {};
\draw (v)--(x1);
\draw (v)--(x2);
\draw (v)--(x3);

\foreach \t/\name in {0.2/a12,0.4/b12,0.6/c12,0.8/d12}
  \node[new vertex] (\name) at ($(x1)!\t!(x2)$) {};
\draw (x1)--(a12)--(b12)--(c12)--(d12)--(x2);

\foreach \t/\name in {0.2/a23,0.4/b23,0.6/c23,0.8/d23}
  \node[new vertex] (\name) at ($(x2)!\t!(x3)$) {};
\draw (x2)--(a23)--(b23)--(c23)--(d23)--(x3);

\foreach \t/\name in {0.2/a31,0.4/b31,0.6/c31,0.8/d31}
  \node[new vertex] (\name) at ($(x3)!\t!(x1)$) {};
\draw (x3)--(a31)--(b31)--(c31)--(d31)--(x1);
\end{tikzpicture}
\caption{$n_1(v)=3$.}
\end{subfigure}
\hfill
\begin{subfigure}[t]{0.34\textwidth}
\centering
\begin{tikzpicture}[scale=0.84]
\tikzset{
old vertex/.style={draw,shape=circle,fill=black,inner sep=1.55pt},
new vertex/.style={draw,shape=circle,fill=white,inner sep=1.55pt}
}
\node[old vertex,label=below:$v$] (v) at (0,0.0) {};
\node[old vertex,label=above:$x_1$] (x1) at (0,2.0) {};
\node[new vertex] (a) at (1.9,0.65) {};
\node[new vertex] (b) at (1.15,-1.55) {};
\node[new vertex] (c) at (-1.15,-1.55) {};
\node[new vertex] (d) at (-1.9,0.65) {};
\draw (v)--(x1);
\draw (x1)--(a)--(b)--(c)--(d)--(x1);
\end{tikzpicture}
\caption{$n_1(v)=1$.}
\end{subfigure}
\caption{Examples of a \(5\)-leaf-ring at \(v\). Solid vertices belong to
\(G\), and hollow vertices are newly added.}
\label{fig:leaf-ring}
\end{figure}

\begin{lemma}\label{lem:leaf-ring-mad}
Let $r\ge4$, and let $G$ be a graph with no component isomorphic to
$K_2$. If \(\operatorname{mad}(G)<2+\frac{2}{r}\), then
\(\operatorname{mad}(H_r(G))<2+\frac{2}{r}\).
\end{lemma}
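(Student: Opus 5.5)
The plan is to work with the \emph{excess} of a subgraph. Set $\beta=1+\frac1r$, and for a graph $S$ let $\operatorname{ex}(S)=|E(S)|-\beta|V(S)|$. Then $\operatorname{mad}(F)<2+\frac2r$ holds if and only if $\operatorname{ex}(S)<0$ for every nonempty subgraph $S\subseteq F$. The hypothesis gives $\operatorname{ex}(T)<0$ for every nonempty $T\subseteq G$. Fix a nonempty subgraph $S\subseteq H_r(G)$; we must show $\operatorname{ex}(S)<0$. Adding edges between vertices already in $S$ only increases $\operatorname{ex}(S)$, so we may assume $S$ is induced. Also, $\operatorname{ex}$ is additive over vertex-disjoint unions, so we may assume $S$ is connected.

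The main step is a decomposition of $V(S)$. Let $Y=V(H_r(G))\setminus V(G)$ be the set of new vertices. Partition $V(S)$ into three parts: the core part $V(S)\cap V(K)$; the leaves $x\in V(S)\cap X$; and the new vertices $V(S)\cap Y$. Each new vertex lies on a unique added path $P_{v,i}$, or on a unique cycle $C_v$ when $n_1(v)=1$. We charge each new vertex, and each edge of $S$ incident to it, to a leaf as follows.
\begin{itemize}
\item The $r-1$ internal vertices and $r$ edges of a path $P_{v,i}$ are charged to $x_i$.
\item The $r-1$ new vertices and $r$ edges of a cycle $C_v$ with $n_1(v)=1$ are charged to $x_1$.
\item Every leaf edge $vx_i$ of $G$ is also charged to $x_i$.
\end{itemize}
Now compare contributions. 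A complete added path together with its leaf edge contributes $r+1$ edges and $1+(r-1)=r$ vertices, so its excess is $r+1-\beta r=0$. The same count holds for a leaf whose added cycle lies entirely in $S$. If only part of an added path lies in $S$, then $S[P\cap S]$ is a union of subpaths, each having at most as many edges as it has new vertices. Each new vertex brings weight $\beta>1$, so a partial path contributes strictly less than a complete one. Dropping the leaf edge $vx_i$, which happens when $v\notin S$, lowers the contribution by $1$. Hence every leaf of $S$, together with everything charged to it, contributes excess at most $0$, with equality only when its whole path (or cycle) and the edge to $v$ are present. A component of $S[Y]$ that contains no vertex of $G$ is a path, so it contributes excess at most $-\beta<0$.

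Summing up, $\operatorname{ex}(S)\le \operatorname{ex}(S[V(S)\cap V(K)])$ plus nonpositive terms. Here we use that every vertex of $K$ keeps its degree, and the edges among vertices of $K$ are exactly those of $G$. If $V(S)\cap V(K)\neq\emptyset$, then $S[V(S)\cap V(K)]$ is a nonempty subgraph of $G$, so its excess is negative and we are done.

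Otherwise $S$ contains no vertex of $K$. Then no leaf edge $vx_i$ is present, so every leaf in $S$ contributes at most $-1$. If $S$ contains no leaf, then $S$ is a path in $Y$, whose excess was shown above to be negative. Either way $\operatorname{ex}(S)<0$. The case $S=C_v$ fits this as well: it has $r\ell$ edges and $r\ell$ vertices, so $\operatorname{ex}(C_v)=-\ell<0$. Here the assumption that $G$ has no $K_2$ component is used: it guarantees that each leaf's neighbor $v$ lies in $K$, so the charging scheme is well defined.

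The main obstacle is the bookkeeping for partially included added paths and for paths shared between consecutive leaves $x_i$ and $x_{i+1}$. Each $P_{v,i}$ must be charged to exactly one of its endpoints, namely $x_i$. Then we must verify that when $x_i\notin S$ but $x_{i+1}\in S$, the residual segments of $P_{v,i}$ still have nonpositive excess. I would handle this by the uniform observation that every subpath lying inside $Y$, taken together with any subset of its at most two attaching edges, has at most $|V|+1$ edges. The $+1$ can occur only when the full path is present, and in that case both leaves are present.
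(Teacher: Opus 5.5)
Your argument is correct, and it is the same potential-function argument as the paper's, since your excess is $\operatorname{ex}(F)=-\Phi_r(F)/r$ with $\Phi_r(F)=(r+1)|V(F)|-r|E(F)|$. In both proofs one passes to induced subgraphs and splits off the part induced on core vertices. One then shows that every added gadget contributes nonpositive excess, while the core part (or, if it is empty, some gadget) is strictly negative.

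What differs is the unit of decomposition. The paper takes a whole leaf-ring $C_v$ together with its $\ell$ edges to $v$. These units meet the rest of the graph only through $v$, so they are genuinely independent. The price is that a path component of $C_v[Y_v]$ can pass through several leaves, which is why Case 3 needs the spacing estimate $t\ge r(q-1)+1$. You take a single leaf $x_i$ with the edge $vx_i$ and the one arc $P_{v,i}$. Your local estimate is immediate, and a complete unit (with $v$ and $x_{i+1}$ present) has exactly $r+1$ edges on $r$ vertices, which shows transparently why $2+\frac2r$ is the threshold. The price is that your units are not independent, because the last edge of $P_{v,i}$ ends at $x_{i+1}$, which lies in another unit.

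This is exactly where your write-up is loose. The sentence saying a component of $S[Y]$ contributes at most $-\beta$ ignores the edge to $x_{i+1}$, which your scheme charges to the absent leaf $x_i$. An $m$-vertex segment together with that edge has excess $-m/r$, not $\le-\beta$. This is still negative, as your closing remark essentially says, so nothing breaks.

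A cleaner formulation treats every leaf uniformly, whether present or not. Let $W$ be the set of present vertices in the unit of $x_i$, namely $x_i$ and the interior of $P_{v,i}$, or all of $C_v$ when $n_1(v)=1$. The present edges charged to this unit number at most $|W|$. The only exception is when the unit is complete and $v$ (and $x_{i+1}$, when $n_1(v)\ge2$) lie in $S$; then there are exactly $|W|+1=r+1$. Hence every unit contributes either exactly $0$ or at most $-|W|/r$. When $V(S)\cap V(K)=\emptyset$, every nonempty unit is strictly negative.
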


\begin{proof}
Let $H=H_r(G)$. For a graph $F$, define \(\Phi_r(F)=(r+1)|V(F)|-r|E(F)|\).
For every nonempty graph $F$,
\[
\frac{2|E(F)|}{|V(F)|}<2+\frac{2}{r}
\quad\Longleftrightarrow\quad
\Phi_r(F)>0.
\]
It is therefore enough to prove that \(\Phi_r(H[Y])>0\) for every nonempty set $Y\subseteq V(H)$.

Fix such a set $Y$, and set \(S=Y\cap V(K)\).
If $S\neq\emptyset$, then $K[S]=G[S]$, and the assumption
$\operatorname{mad}(G)<2+2/r$ gives \(\Phi_r(K[S])>0\).
We now determine the contribution of the selected vertices from each
leaf-ring relative to this core subgraph $G[S]$. 
The vertex sets of distinct leaf-rings are disjoint outside
$V(K)$, and no edge in $E(H)\setminus E(K)$ belongs to two distinct
leaf-rings. Therefore, their contributions relative to $K[S]$ may be
summed independently.

Fix a leaf-ring $R_v$ with $Y_v=V(C_v)\cap Y\neq \emptyset$, write \(N_G(v)\cap X=\{x_1,\ldots,x_\ell\}\).
We distinguish four
cases.

\medskip
\noindent\textbf{Case 1: $v\notin S$ and $Y_v\neq V(C_v)$.}

Since the whole cycle $C_v$ is not selected, the graph $C_v[Y_v]$
is a disjoint union of paths, where an isolated vertex is regarded as
a path. A nonempty path component with $t$ vertices has $t-1$ edges
and hence contributes
\[
(r+1)t-r(t-1)=t+r>0
\]
to $\Phi_r(H[Y])$. 

\medskip
\noindent\textbf{Case 2: $v\notin S$ and $Y_v=V(C_v)$.}

The selected part is the entire cycle $C_v$, and none of the edges
$vx_i$ belongs to $H[Y]$. Since $C_v$ has $r\ell$ vertices and
$r\ell$ edges, its contribution is
\[
(r+1)r\ell-r(r\ell)=r\ell>0.
\]

\medskip
\noindent\textbf{Case 3: $v\in S$ and $Y_v\neq V(C_v)$.}

Again, $C_v[Y_v]$ is a disjoint union of paths. Let $Q$ be a
nonempty path component, let $t=|V(Q)|$, and let $q$ be the number
of vertices among $x_1,\ldots,x_\ell$ that belong to $Q$. In
addition to the $t-1$ edges of $Q$, the graph $H[Y]$ contains the
$q$ edges joining these vertices to the already selected vertex $v$.
Thus the increase 
relative to $K[S]$ is
\[
(r+1)t-r\bigl((t-1)+q\bigr)=t-r(q-1).
\]
If $q=0$, this value is $t+r>0$. If $q\ge1$, then consecutive
vertices of $\{x_1,\ldots,x_\ell\}$ on $C_v$ are at distance $r$.
Consequently,
\[
t\ge r(q-1)+1,
\]
and the increase is at least $1$. 

\medskip
\noindent\textbf{Case 4: $v\in S$ and $Y_v=V(C_v)$.}

In this case, \(\Phi_r(K[S])>0\) and the whole leaf-ring outside $v$ is selected. Such additional part contributes
$r\ell$ vertices, $r\ell$ edges of $C_v$, and the $\ell$ edges
$vx_i$. Hence its increase relative to $K[S]$ is
\[
(r+1)r\ell-r(r\ell+\ell)=0.
\]

Cases~1--4 show that every leaf-ring adds a nonnegative amount. From the analysis of all cases, we always get \(\Phi_r(H[Y])>0\) and complete the proof.
\end{proof}


We shall also use the following characterization of the exceptional
$4$-list assignments of $C_5$.

\begin{lemma}[\cite{kashima2026proper}]\label{lem:c5-characterization}
Let $C=v_0v_1v_2v_3v_4v_0$ be a $5$-cycle, and let $L$ be a
$4$-list assignment of $C$. Then $C$ is not \textnormal{PCF}
$L$-colorable if and only if \(L(v_0)=L(v_1)=L(v_2)=L(v_3)=L(v_4)\)
and $|L(v_0)|=4$.
\end{lemma}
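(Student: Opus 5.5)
The plan is to reduce the statement to a system-of-distinct-representatives problem and then apply Hall's theorem.

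\textbf{Step 1: PCF colorings of the $5$-cycle are exactly injective colorings.} Every vertex $v_i$ of $C$ has degree $2$, with neighbors $v_{i-1}$ and $v_{i+1}$ (indices mod $5$). Its neighborhood multiset has a unique color if and only if $\varphi(v_{i-1})\neq\varphi(v_{i+1})$. So a proper coloring $\varphi$ of $C$ is PCF if and only if vertices at distance $1$ and vertices at distance $2$ all receive distinct colors. In $C_5$ any two distinct vertices are at distance $1$ or $2$. Hence the PCF $L$-colorings of $C$ are precisely the $L$-colorings of $K_5$ on the same vertex set, that is, choices $\varphi(v_i)\in L(v_i)$ that are pairwise distinct.

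\textbf{Step 2: apply Hall's theorem to $(L(v_0),\dots,L(v_4))$.} Such an injective choice exists if and only if $\bigl|\bigcup_{i\in J}L(v_i)\bigr|\ge|J|$ for every $J\subseteq\{0,\dots,4\}$. Since every list has size at least $4$, this holds automatically whenever $|J|\le4$. The only possible failure is $J=\{0,\dots,4\}$, where it means $\bigl|\bigcup_{i}L(v_i)\bigr|\le4$.

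\textbf{Step 3: read off the characterization.} Suppose the union $U$ has at most $4$ colors. Each $L(v_i)\subseteq U$ has at least $4$ colors, so $L(v_i)=U$ for every $i$ and $|U|=4$. This gives the \emph{only if} direction. Conversely, if all five lists equal the same $4$-set, no five pairwise distinct colors can be chosen, so by Step 1 there is no PCF $L$-coloring. This gives the \emph{if} direction.

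The only step needing care is Step 1, specifically checking that the PCF condition at a degree-$2$ vertex is exactly that its two neighbors receive different colors. Once that is established, the remaining steps are a direct application of Hall's theorem.
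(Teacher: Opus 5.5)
Your proof is correct. The paper gives no proof of this lemma of its own; it imports it from Kashima, \v{S}krekovski, and Xu~\cite{kashima2026proper}. Your argument therefore supplies a self-contained justification rather than an alternative to one in the text.

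Your key step is that at a degree-$2$ vertex a unique color exists exactly when the two neighbors differ, and that $C_5$ has diameter $2$. This turns PCF $L$-coloring of $C_5$ into ordinary $L$-coloring of $K_5$ (the square of $C_5$). The Hall's-theorem step is then the complete-graph case of the degree-choosability theory of Borodin and Erd\H{o}s--Rubin--Taylor~\cite{borodin1977criterion,erdos1979choosability} quoted in the introduction. In that case, lists of size at least $n-1$ on $K_n$ fail only when all of them are the same $(n-1)$-set.

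You correctly read ``$4$-list assignment'' as $|L(v)|\ge4$, matching the paper's convention for $f$-list assignments. You also derive $|L(v_0)|=4$ in the only-if direction rather than assuming it.

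If you prefer to avoid Hall, a greedy argument gives the same conclusion.
\begin{itemize}
\item If some list has at least $5$ colors, color that vertex last. Each of the other four vertices has at most $3$ forbidden colors when it is colored, and the last vertex has at most $4$.
\item Otherwise all lists have exactly $4$ colors and two of them differ, so pick $c\in L(v_j)\setminus L(v_k)$. Color $v_j$ with $c$ first, then the remaining three vertices greedily. Color $v_k$ last: one of the four used colors is $c\notin L(v_k)$, so at most $3$ of its colors are blocked.
\end{itemize}
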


We now introduce the coloring convention used. We always delete a nonempty
vertex set $S$ from a minimal counterexample $G$ to our results. By inductive assumption, any component $F$ of \(G-S\) admits a \textnormal{PCF} $L$-coloring, unless it satisfies the condition described in \textbf{Lemma~\ref{lem:c5-characterization}}. We call such $F$ an \emph{exceptional $C_5$-component}, i.e., all five lists on $F$ are the same $4$-set $A_F$, every vertex of $\partial_SF$ is a $3$-vertex of $G$ and has exactly one neighbor in $S$, where \(\partial_SF=\{x\in V(F):N_G(x)\cap S\neq\emptyset\}\).


For an exceptional $C_5$-component $F$ with the color set $ A_F$, fix a prescribed vertex $x_F\in\partial_SF$ and a prescribed color $\alpha_F\in A_F$. Write \(F=x_Fv_1v_2v_3v_4x_F\) and $ A_F= \{\alpha_F,\beta_F,\gamma_F,\delta_F\}$. We give an  \emph{admissible $L$-coloring} $\varphi$ of $F$ with the \emph{distinguished vertex} $x_F$ and  the \emph{distinguished color} $\alpha_F$ as follows:

\begin{equation*}
\varphi(v_1)=\varphi(v_4)=\alpha_F,\qquad
\varphi(x_F)=\beta_F,\qquad
\varphi(v_2)=\gamma_F,\qquad
\varphi(v_3)=\delta_F.
\end{equation*}

\begin{definition}\label{def:admissible}
Let $\emptyset\neq S\subseteq V(G)$. A proper $L$-coloring
$\varphi$ of $G-S$ is called \emph{admissible} if the following
conditions hold for every component $F$ of $G-S$.
\begin{itemize}
    \item If $F$ is not an exceptional $C_5$-component, then
    $\varphi|_F$ is a \textnormal{PCF} $L$-coloring of $F$.

    \item If $F$ is an exceptional $C_5$-component, then 
    $\varphi|_F$ is an admissible  $L$-coloring of $F$ for some pre-chosen distinguished vertex $x_F$ and distinguished color $\alpha_F$.
\end{itemize}
\end{definition}

For an admissible coloring $\varphi$ of $G-S$ and a
non-isolated vertex $v$ of $G-S$, we define $\varphi^*(v)$ as follows. If $v$ is not the
distinguished vertex of an exceptional $C_5$-component, then
$\varphi^*(v)$ is chosen to be  an arbitrary color occurring exactly once in
$N_{G-S}(v)$. If $v=x_F$ is the distinguished vertex of an exceptional $C_5$-component, set \(\varphi^*(x_F)=\alpha_F\).
In the latter case, the two neighbors of $x_F$ in $F$ receive both 
$\alpha_F$, while $x_F$ has a unique neighbor $u$ in $S$. When coloring the vertices in $S$, say $u$, we will avoid $\varphi(v)$ and $\varphi^*(v)$ for each neighbor $v$ of $u$. The operation will guarantee the existence of unique colors of all already colored vertices, where the distinguished vertex of an exceptional $C_5$-component obtains a unique color from its unique neighbor in $S$. This ultimately yields the \emph{desired coloring}, namely a \textnormal{PCF} $L$-coloring of $G$.



\section{Structural Lemmas}\label{s:structural}

We first isolate a reducibility
property of a minimal counterexample that will be used in both proofs of our main results.

\begin{lemma}\label{lem:leaf-neighbor}
Let $G$ be a connected graph, and let $L$ be a list assignment with
$|L(v)|=\kappa_G(v)$ for every $v\in V(G)$. Suppose that $G$ has no
\textnormal{PCF} $L$-coloring and that, for every nonempty proper set
$S\subsetneq V(G)$, every component $F$ of $G-S$ with
$F\not\cong C_5$ admits a \textnormal{PCF} $L|_F$-coloring. Then every $1$-vertex of $G$ has a
$4^+$-neighbor.
\end{lemma}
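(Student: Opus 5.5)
We argue by contradiction: suppose some $1$-vertex $x$ has its unique neighbor $v$ with $d_G(v)\le 3$, and take $S=\{x\}$. Then $G-x$ is connected, since removing a leaf does not disconnect. If $G-x\not\cong C_5$, the hypothesis gives a PCF $L$-coloring $\varphi$ of $G-x$. If $G-x\cong C_5$, then $d_G(v)=3$ and $G$ is a $5$-cycle with a pendant vertex. In that case we handle $G-x$ directly: either it has a PCF $L$-coloring, or by Lemma~\ref{lem:c5-characterization} all its lists equal the same $4$-set, and then the admissible coloring with distinguished vertex $x_F=v$ is used. We also treat first the degenerate case $G=K_2$ (where $v$ is itself a $1$-vertex), which is trivially colorable from $2$-lists, so we may assume $d_G(v)\in\{2,3\}$.

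The task is then to extend the coloring to $x$ and possibly recolor $v$. The vertex $x$ has only $|L(x)|=2$, and it must satisfy two conditions. First, $x$ must avoid $\varphi(v)$ for properness; this is the only constraint of that kind, since $x$ automatically has a unique color, namely $\varphi(v)$, its only neighbor. Second, $v$ must retain a unique color. If $\varphi^*(v)$ exists and $x$ avoids both $\varphi(v)$ and $\varphi^*(v)$, we are done, but two lists of forbidden colors can exhaust $L(x)$. So the key step is to recolor $v$ as well: uncolor $v$ and, with $S=\{x,v\}$, color $G-\{x,v\}$ by the hypothesis, with exceptional $C_5$-components handled via admissible colorings. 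We then choose $\varphi(v)$ and $\varphi(x)$ jointly.

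When $d_G(v)=2$, we have $|L(v)|=4$. Let $w$ be the other neighbor of $v$. Then $v$ must avoid $\varphi(w)$ and $\varphi^*(w)$, leaving at least $2$ choices. Since $v$ has neighbors $x$ and $w$, it needs $\varphi(x)\ne\varphi(w)$, and $x$ needs $\varphi(x)\ne\varphi(v)$. Choosing $v$ first from its $2$ options and then $x\in L(x)\setminus\{\varphi(v)\}$ leaves a conflict only when that choice equals $\varphi(w)$. This conflict can be killed by choosing $\varphi(v)$ so that $L(x)\setminus\{\varphi(v)\}\not\subseteq\{\varphi(w)\}$, which is always possible with two options for $v$.

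When $d_G(v)=3$, we have $|L(v)|=4$ with neighbors $x,w_1,w_2$. The vertex $v$ must avoid $\varphi(w_i)$ and $\varphi^*(w_i)$, up to $4$ colors, so we cannot guarantee a choice this way. I would instead formulate this case through Lemma~\ref{lem:pcf-representative-system} or a direct count: first color $v$ by avoiding only what is needed, then choose $x$. I expect this subcase, and the exceptional-component bookkeeping when $G-\{x,v\}$ contains a $C_5$, to be the main obstacle.

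For the degree-$3$ case I would try the following. If $\varphi(w_1)=\varphi(w_2)$, then $v$ needs $x$ to carry a unique color different from it, and only $3$ colors are forbidden for $v$, namely $\varphi(w_1)$ and the two $\varphi^*(w_i)$. If $\varphi(w_1)\neq\varphi(w_2)$, then $v$ already has a unique color provided $x$ avoids one of them, and $x$ has $2$ colors, one of which may be forbidden by $\varphi(v)$. The remaining difficulty is when all $4$ colors of $L(v)$ are forbidden. In that situation I would exploit the flexibility of choosing $\varphi^*(w_i)$, or recolor $w_i$ via a symmetric argument, to finish.
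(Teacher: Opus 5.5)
Your treatment of $d_G(v)\le 2$ is fine. For $d_G(v)=2$, you pick $\varphi(v)$ among the two colors of $L(v)\setminus\{\varphi(w),\varphi^*(w)\}$ so that $L(x)\setminus\{\varphi(v),\varphi(w)\}\neq\emptyset$. This is just a reordering of the paper's choice, which colors $x$ first and then $v$ from $L(v)\setminus\{\varphi(x),\varphi(w),\varphi^*(w)\}$.

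The gap is the case $d_G(v)=3$, which you do not prove. You list strategies (a count via the representative-system lemma, exploiting freedom in $\varphi^*(w_i)$, recoloring $w_i$) but carry none of them out. The setup you do propose, uncoloring $v$ and recoloring it against $\varphi(w_1),\varphi(w_2),\varphi^*(w_1),\varphi^*(w_2)$, can exhaust the $4$-list $L(v)$, as you note yourself. So it is the wrong reduction.

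The missing observation is that here you should not uncolor $v$ at all. With $S=\{x\}$, the vertex $v$ has degree $2$ in the connected graph $G-x$. A $2$-vertex has a unique color only if its two neighbors receive distinct colors. So in a PCF coloring of $G-x$ we have $\varphi(w_1)\neq\varphi(w_2)$, and both are unique at $v$.

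Your earlier worry that $x$ must avoid both $\varphi(v)$ and $\varphi^*(v)$ is therefore unfounded. Any $\varphi(x)\in L(x)\setminus\{\varphi(v)\}$ equals at most one of $\varphi(w_1),\varphi(w_2)$, so the other stays unique at $v$. Then $x$ gets the unique color $\varphi(v)$, and no other vertex is affected.

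The only extra care is when $G-x$ is a $5$-cycle whose lists all equal one $4$-set. Take the admissible coloring with $v$ as distinguished vertex, and choose the distinguished color $\alpha\in L(v)\setminus L(x)$; this set is nonempty since $|L(v)|=4>2=|L(x)|$. Then $\varphi(x)\neq\alpha$, so $\varphi(x)$ becomes the unique color at $v$, whose two cycle-neighbors both receive $\alpha$. Your proposal does not specify this choice of $\alpha$, and without it the extension can fail, e.g.\ if $L(x)=\{\alpha,\varphi(v)\}$.
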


\begin{proof}
We prove this by contradiction. Suppose that $x$ is a $1$-vertex with its unique neighbor $u$ and $d_G(u)\le3$. Note that $|L(x)|=2$. If $d_G(u)=1$, then connectedness gives $G=K_2$,
which is immediately \textnormal{PCF} $L$-colorable, a contradiction with the hypothesis. 

Suppose that $d_G(u)=2$. Then $|L(u)|=4$.
Let $w$ be the other neighbor of $u$. Set $S=\{x,u\}$.
Take an admissible coloring $\varphi$ of $G-S$ where $w$ is the distinguished vertex when $G-S$ is an exceptional $C_5$; such a coloring exists by the hypothesis and
\textbf{Lemma~\ref{lem:c5-characterization}}. 
Choose
\(\varphi(x)\in L(x)\setminus\{\varphi(w)\}\).
If $w$ is non-isolated in $G-S$, choose
\[
\varphi(u)\in
L(u)\setminus\{\varphi(x),\varphi(w),\varphi^*(w)\};
\]
otherwise choose
\(\varphi(u)\in L(u)\setminus\{\varphi(x),\varphi(w)\}\).
Obviously, the resulting coloring is a \textnormal{PCF}
$L$-coloring of $G$, a contradiction.


It remains to consider $d_G(u)=3$. Let $G'=G-x$. Then $G'$ is
connected, and $L|_{G'}$ is a $\kappa_{G'}$-list assignment because
$u$ changes from degree $3$ to degree $2$, for which the prescribed
list size is still $4$. We take an admissible coloring $\varphi$ of $G'$ where $u$ is the distinguished vertex and the distinguished color $\alpha_{G'}\in L(u)\backslash L(x)$  when $G'$ is an exceptional $C_5$; 
such a coloring exists by the hypothesis and
\textbf{Lemma~\ref{lem:c5-characterization}}.  

 Choose \(\varphi(x)\in L(x)\setminus\{\varphi(u)\}\). If $G'$ is not an exceptional $C_5$, the two neighbors of $u$ receive distinct colors. So $u$ has at least one unique color left after coloring $x$.  If $G'$ is an exceptional $C_5$, then $\varphi(x)\neq \alpha_{G'}$ for the choice of $\alpha_{G'}$, and  $\varphi(x)$ becomes the unique color of $u$. Therefore the resulting coloring is a \textnormal{PCF}
$L$-coloring of $G$, a contradiction.
\end{proof}

For the remainder of this section, let $G$ be a counterexample to
\textbf{Theorem~\ref{th1}} of minimum order. Choose a
$\kappa_G$-list assignment $L_0$ for which $G$ has no \textnormal{PCF}
$L_0$-coloring, and, for each $v\in V(G)$, choose
$L(v)\subseteq L_0(v)$ with $|L(v)|=\kappa_G(v)$. Thus $G$ has no
\textnormal{PCF} $L$-coloring and
\[
|L(v)|=
\begin{cases}
2, & \text{if }d_G(v)=1,\\
4, & \text{if }d_G(v)=2,\\
d_G(v)+1, & \text{if }d_G(v)\ge3.
\end{cases}
\]

Obviously, $G$ is connected, $G\ncong C_4$, $G\ncong C_5$ and  $|V(G)|\ge 4$. We record the form of minimality used below. Let
$\emptyset\neq S\subset V(G)$, and let $F$ be a component of $G-S$.
For every $v\in V(F)$,
\(|L(v)|=\kappa_G(v)\ge\kappa_F(v)\).
Moreover,
$\operatorname{mad}(F)\le\operatorname{mad}(G)<12/5$. Hence every
component $F\neq C_5$ has a \textnormal{PCF} $L$-coloring by minimality, while a
$C_5$-component is handled by
\textbf{Lemma~\ref{lem:c5-characterization}} and
\textbf{Definition~\ref{def:admissible}}. In particular, the
hypotheses of \textbf{Lemma~\ref{lem:leaf-neighbor}} hold.

Set
\[
X=\{x\in V(G):d_G(x)=1\},
\qquad
K=G-X,
\qquad
H=H_5(G).
\]
Then $\delta(H)\ge2$ and \(\operatorname{mad}(H)<\frac{12}{5}\) by \textbf{Lemma~\ref{lem:leaf-ring-mad}}.

We call the vertices of $V(H)\setminus V(K)$ \emph{ring vertices}
and the vertices of $V(K)$ \emph{core vertices}.
For a core vertex
$v$, let
\[
n_{\mathrm{ring}}(v)
=|N_H(v)\cap(V(H)\setminus V(K))|.
\]
Every core vertex has the same degree and the same core neighbors in
$H$ as in $G$, and its ring neighbors are exactly the $1$-neighbors
it had in $G$. Hence $d_H(v)=d_G(v)$ and $n_{\mathrm{ring}}(v)
=|\{x\in N_G(v):d_G(x)=1\}|$ 
for every core vertex $v$.

\begin{lemma}\label{lem:ring-structure}
The following statements hold in $H$.
\begin{itemize}
    \item[\textnormal{\textbf{(i)}}]
    Every ring vertex has degree $2$ or $3$. A ring $3$-vertex has
    exactly one core neighbor and two ring neighbors, whereas a ring
    $2$-vertex has two ring neighbors and no core neighbor.

    \item[\textnormal{\textbf{(ii)}}]
    The unique core neighbor of every ring $3$-vertex is a core
    $4^+$-vertex.

    \item[\textnormal{\textbf{(iii)}}]
    Every ring $2$-vertex lies on a $4$-thread.

    \item[\textnormal{\textbf{(iv)}}]
    Let $x$ be a ring $3$-vertex. If $x$ is the sole anchor of a
    $4$-thread, then this is the only thread incident with $x$.
    Otherwise, $x$ is an anchor of exactly two $4$-threads, each
    having two distinct ring $3$-vertices as anchors.

    \item[\textnormal{\textbf{(v)}}]
    Every thread incident with a core vertex contains only core
    $2$-vertices.
\end{itemize}
\end{lemma}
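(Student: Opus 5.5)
The lemma is purely structural, so the plan is to read each item off Definition~\ref{def:leaf-ring} with $r=5$, using Lemma~\ref{lem:leaf-neighbor} for \textbf{(ii)}. First we list the facts used throughout. The ring vertices are the former leaves $x\in X$ and the newly added vertices. Each $x\in X$ has degree $3$ in $H$: it keeps its neighbor $v\in V(K)$ and gains two neighbors on $C_v$. The vertex $v$ is a core vertex, since if $v\in X$ then $G\cong K_2$, which is excluded. Each new vertex has degree $2$, and both its neighbors lie on the cycle $C_v$, hence are ring vertices. This already gives \textbf{(i)}: ring $3$-vertices are exactly the vertices of $X$, and ring $2$-vertices are exactly the new vertices.

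For \textbf{(ii)}, let $x\in X$ with core neighbor $v$. The hypotheses of Lemma~\ref{lem:leaf-neighbor} hold by minimality, as recorded before the lemma, so $d_G(v)\ge4$. Since $d_H(v)=d_G(v)$, the vertex $v$ is a core $4^+$-vertex.

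For \textbf{(iii)} and \textbf{(iv)}, fix a core vertex $v$ with $\ell=n_1(v)\ge1$.
\begin{itemize}
\item If $\ell\ge2$, each path $P_{v,i}$ has length $5$ and hence four internal new vertices. These are $2$-vertices whose component in the subgraph induced by $2$-vertices is exactly that path interior, and its outside neighbors $x_i,x_{i+1}$ are $3$-vertices. So it is a $4$-thread with the two distinct ring $3$-vertices $x_i\neq x_{i+1}$ as anchors. Each $x_i$ lies on exactly two such paths, $P_{v,i-1}$ and $P_{v,i}$, and its remaining neighbor $v$ is a $4^+$-vertex. Thus $x_i$ is incident with exactly these two threads.
\item If $\ell=1$, the cycle $C_v$ of length $5$ gives four new $2$-vertices forming a $4$-thread whose two exiting edges both end at $x_1$. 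So $x_1$ is its sole anchor, and $x_1$ has no other $2$-neighbor, since $v$ has degree at least $4$.
\end{itemize}
In both cases every new vertex lies on one of these $4$-threads, which proves \textbf{(iii)} and \textbf{(iv)}.

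For \textbf{(v)}, let $Q$ be a thread incident with a core vertex $u$. Then some $2$-vertex of $Q$ is adjacent to $u$. By \textbf{(i)}, a ring $2$-vertex has no core neighbor, so that vertex is a core $2$-vertex. Moving along $Q$, a core vertex can be adjacent to a ring vertex only if that ring vertex is a $3$-vertex of $X$. Such a vertex cannot lie on $Q$, because $Q$ consists of $2$-vertices. Hence $Q$ never enters the ring vertices, and all its vertices are core $2$-vertices. We also check that these core $2$-vertices really have degree $2$ in $H$, which holds because core degrees are unchanged.

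The only delicate step is the thread bookkeeping in \textbf{(iv)}. We must verify that the $2$-vertex components arising from the rings are exactly the $4$-vertex path interiors. This requires that the anchors $x_i$ have degree $3$, not $2$, so that the components do not merge, and that no core $2$-vertex is adjacent to a ring $2$-vertex. Both facts follow from \textbf{(i)}.
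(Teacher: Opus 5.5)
Your proposal is correct and takes the same route as the paper, whose proof consists only of the remark that all five items follow directly from Definition~\ref{def:leaf-ring} (with $r=5$) and Lemma~\ref{lem:leaf-neighbor}. Your write-up supplies the routine verification the paper omits: the leaf's neighbor is a core vertex because $G\not\cong K_2$, and the anchors $x_i$ have degree $3$, so the $2$-vertex components are exactly the four-vertex ring interiors.
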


\begin{proof}
These statements 
follow directly from \textbf{Definition~\ref{def:leaf-ring}} and \textbf{Lemma~\ref{lem:leaf-neighbor}}.
\end{proof}

We shall derive structural restrictions on the core vertices and apply the discharging method on $H$. The resulting contradiction shows that the auxiliary
graph $H=H_5(G)$ associated with a minimal counterexample $G$ cannot exist, and hence the minimal
counterexample $G$ itself cannot exist. Whenever a set $S$ is deleted
from $G$, the minimality of $G$ together with
\textbf{Lemma~\ref{lem:c5-characterization}} gives an admissible
coloring of each component of $G-S$ as described in
\textbf{Section~\ref{s:auxiliary}}.
Whenever such an admissible coloring $\varphi$ is fixed, we use
the notation $\varphi^*(v)$ for a non-isolated vertex
$v\in V(G-S)$ exactly as specified in
\textbf{Section~\ref{s:auxiliary}}.

The structural lemmas below are stated for $H$, but their reducibility
proofs are carried out in $G$. Indeed, each ring $3$-neighbor of a
core vertex is an original $1$-vertex of $G$, while every thread incident
with a core vertex is inherited unchanged from $G$. Thus an assumed
configuration in $H$ yields the corresponding configuration in $G$,
where we delete the indicated vertices and extend an admissible
coloring. 


\begin{lemma}\label{lem:structure-1}
The following statements hold in $H$.
\begin{itemize}
    \item[\textnormal{\textbf{(i)}}]
    No core $3$-vertex is incident with a $2^+$-thread.

    \item[\textnormal{\textbf{(ii)}}]
    No thread incident with a core vertex is a $4^+$-thread.

    \item[\textnormal{\textbf{(iii)}}]
    Every core $2$-vertex lies on a thread whose two anchors are
    distinct core $3^+$-vertices.
\end{itemize}
\end{lemma}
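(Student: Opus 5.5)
We argue each item by reducibility in $G$: assume the configuration, delete the $2$-vertices of the relevant thread (sometimes with one anchor), and take an admissible coloring $\varphi$ of $G-S$ from minimality and \textbf{Lemma~\ref{lem:c5-characterization}}, choosing as distinguished vertex of any exceptional $C_5$-component the vertex adjacent to $S$. We then extend $\varphi$ along the thread. When we color a deleted vertex $u$, its residual list is $L(u)$ minus the colors $\varphi(y)$ and $\varphi^*(y)$ of its colored neighbors $y$, and minus the colors needed to keep unique colors at $2$-vertices already colored on the thread; a $2$-vertex has a unique color exactly when its two neighbors differ. By \textbf{Lemma~\ref{lem:leaf-ring-mad}} and \textbf{Lemma~\ref{lem:ring-structure}}, every statement about $H$ here concerns core vertices and threads inherited unchanged from $G$, so it suffices to work in $G$.

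For \textbf{(i)}, let $v$ be a $3$-vertex with thread $u_1u_2\cdots u_k w$, $k\ge2$, where $u_1v\in E(G)$, and set $S=\{u_1,\dots,u_k\}$. We color from the $w$-end toward $v$. The vertex $u_k$ avoids $\varphi(w)$ and $\varphi^*(w)$. Each $u_i$ with $i<k$ avoids $\varphi(u_{i+1})$ and $\varphi(u_{i+2})$, and $u_{k-1}$ also avoids $\varphi(w)$; every such list has size at least $4$, so this always works. The delicate vertex is $u_1$. It must avoid $\varphi(v)$, $\varphi(u_2)$ and $\varphi(u_3)$ (or $\varphi(w)$ if $k=2$), and it must preserve a unique color at $v$.

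Let $a,b$ be the two other neighbors of $v$. If $\varphi(a)\neq\varphi(b)$, then $v$ keeps a unique color whatever $\varphi(u_1)$ is, and three exclusions leave a choice. If $\varphi(a)=\varphi(b)=c$, then $u_1$ must also avoid $c$, which gives four exclusions on a $4$-list. Here I would use the freedom at $u_2$: when coloring $u_2$ (it has at most three exclusions), choose $\varphi(u_2)=c$ if $c$ is admissible there, or otherwise choose $\varphi(u_2)$ so that the sets $\{\varphi(v),c,\varphi(u_2),\varphi(u_3)\}$ overlap. The plan is to organize this case check through \textbf{Lemma~\ref{lem:pcf-representative-system}}, with $B$ the sequence of colors on $N(v)\setminus\{u_1\}$. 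The case $a=b$, i.e.\ $v$ being both anchors, is treated the same way with $v$ adjacent to both ends of the thread. I expect this coincidence bookkeeping at $u_1$ to be the main obstacle.

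For \textbf{(ii)}, with a $4^+$-thread at a core vertex, I would delete only the interior $2$-vertices. Having at least two deleted vertices away from both anchors gives slack: color from both ends inward, leaving the middle vertex last, whose constraints are at most three colors, namely its two neighbors' colors and one extra to keep a unique color at a neighbor. This is a routine counting argument once the order is fixed. The anchor $v$ need not be $3$-vertex-specific, since we never recolor it.

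For \textbf{(iii)}, let $u$ be a core $2$-vertex. If the component of $2$-vertices containing $u$ is a cycle, then $G$ is a cycle other than $C_4,C_5$, and we handle it directly via \textbf{Lemma~\ref{lem:pcf-representative-system}}, or note that it falls under the preceding argument with one vertex precolored. Otherwise, the ends of its path component are adjacent to non-$2$-vertices. By \textbf{Lemma~\ref{lem:leaf-neighbor}}, neither end is adjacent to a $1$-vertex, since a $1$-vertex's neighbor is a $4^+$-vertex. So the path is a thread with $3^+$ anchors, both of which are core vertices. It remains to exclude equal anchors $v$. In that case we delete the thread, color $G-S$, and use that $v$ then has two neighbors on the thread, so a unique color at $v$ is easy to arrange. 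This case reduces to the same extension scheme as in (i) and (ii).
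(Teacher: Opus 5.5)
Your overall framework (reduce in $G$, take an admissible coloring of $G-S$, extend along the thread) matches the paper's, and (iii) is fine in outline. But two of the extension steps do not go through as you describe them.

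\textbf{Part (i): the case $\varphi(a)=\varphi(b)$ is left open, and your repair fails.} First note when this case can occur. For $w\neq v$, suppose the component of $G-S$ containing $v$ is not an exceptional $C_5$-component, or $v$ is not its distinguished vertex. Then $v$ is a $2$-vertex of $G-S$ that already has a unique color, so $\varphi(a)\neq\varphi(b)$ automatically. The bad case is therefore exactly $v=x_F$ for an exceptional $C_5$-component $F$; this is forced when $w\notin F$. There $\varphi(v)=\beta_F$ and $\varphi(a)=\varphi(b)=\alpha_F=:c$.

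With $\varphi|_F$ fixed, freedom at $u_2$ is not enough. Take $k=2$, $\varphi(w)=5$, $\varphi^*(w)=c$, $L(u_2)=\{5,c,\beta_F,6\}$ and $L(u_1)=\{\beta_F,c,6,5\}$. Then $\varphi(u_2)=6$ is forced, and every color of $L(u_1)$ is forbidden. This agrees with Lemma~\ref{lem:pcf-representative-system}, which gives no guarantee for $B=(c,c)$ and a singleton set.

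The missing idea is to color $F$ \emph{after} the thread, using that all five lists on $F$ equal the same $4$-set $A_F$:
\begin{itemize}
\item Color $u_k,\dots,u_1$ without reference to $v$; each has at most two exclusions.
\item Give $v$ a color $\beta\in A_F\setminus\{\varphi(u_1),\varphi(u_2)\}$.
\item Give both cycle-neighbors of $v$ a color $\alpha\in A_F\setminus\{\beta,\varphi(u_1)\}$.
\item Color the rest of $F$ with the two remaining colors of $A_F$.
\end{itemize}
Then $\varphi(u_1)$ is unique at $v$, and $u_1$ sees two distinct colors. This is what the paper does.

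\textbf{Part (ii): your count for the last vertex is wrong.} Coloring inward from both ends, the final vertex $u_m$ has both neighbors $u_{m-1},u_{m+1}$ already colored. Each of them is a $2$-vertex whose other neighbor is also already colored. So $u_m$ must avoid $\varphi(u_{m-1}),\varphi(u_{m+1})$ for properness, and also $\varphi(u_{m-2}),\varphi(u_{m+2})$ (or an anchor color) for the unique colors of $u_{m\pm1}$. That is four colors on a $4$-list, not three.

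Already for four consecutive $2$-vertices $u_1,\dots,u_4$ next to $v$, with $z$ the outside neighbor of $u_4$, the residual lists have sizes at least $2,3,3,2$. The conflict graph is $K_4-u_1u_4$, with degrees $2,3,3,2$, so greedy coloring can get stuck. You need a deferred choice, as in the paper:
\begin{itemize}
\item Fix a $2$-set $M\subseteq L(u_1)\setminus\{\varphi(v),\varphi^*(v)\}$.
\item Choose $\varphi(u_2)\notin M\cup\{\varphi(v)\}$.
\item Choose $\varphi(u_4)\notin\{\varphi(z),\varphi^*(z),\varphi(u_2)\}$.
\item Choose $\varphi(u_3)\notin\{\varphi(z),\varphi(u_2),\varphi(u_4)\}$.
\item Finally choose $\varphi(u_1)\in M\setminus\{\varphi(u_3)\}$.
\end{itemize}

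\textbf{Part (iii).} This matches the paper once (ii) bounds the thread length by $3$. The cycle case still needs an actual argument. The paper invokes Theorem~\ref{sub}. Alternatively, a PCF coloring of $C_n$ is a proper coloring of $C_n^2$. For $n\ge6$ this graph is $4$-regular, $2$-connected and not complete, so it is $4$-choosable by the Borodin--Erd\H{o}s--Rubin--Taylor theorem.
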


\begin{proof}
We prove the assertions in order.

\medskip
\noindent\textbf{(i)}
Suppose that a core $3$-vertex $u$ is incident with a
$2^+$-thread. This thread is inherited from $G$. Let $v,w$ be its
first two $2$-vertices starting from $u$, and let $\xi$ be the
neighbor of $w$ distinct from $v$. Put $S=\{v,w\}$. 
We will choose an admissible coloring $\varphi$ for each component of $G-S$ and extend it to be a \textnormal{PCF} coloring of $G$ which is also denoted by $\varphi$. 



A separate argument is required only when the component $F$ of $G-S$ containing $u$ is an exceptional $C_5$-component. 
In this case $\xi\neq u$; otherwise $u$ has degree at most one in $G-S$, so its component cannot be isomorphic to $C_5$. Choose $u$ as the distinguished vertex of $F$. Color all other components of $G-S$ admissibly. 
Choose 
\[
\varphi(w)\in
\begin{cases}
L(w)\setminus\{\varphi(\xi),\varphi^*(\xi)\}, & \xi\notin F\\
L(w), & \xi\in F
\end{cases}
\]
and then choose
\[
\varphi(v)\in
\begin{cases}
L(v)\setminus\{\varphi(\xi),\varphi(w)\}, & \xi\notin F\\
L(v)\setminus\{\varphi(w)\}, & \xi\in F
\end{cases}.
\]
Let $A_F$ denote the common $4$-set assigned to the vertices of the
exceptional $C_5$-component $F$. Choose
\[
\beta\in A_F\setminus\{\varphi(v),\varphi(w)\}
\quad\text{and}\quad
\alpha\in A_F\setminus\{\beta,\varphi(v),\varphi(w)\}.
\]
Color $F$ so that $u$ receives $\beta$, its two cycle-neighbors both
receive $\alpha$, and the remaining two cycle vertices can be properly colored by the
two colors in $A_F\setminus\{\alpha,\beta\}$. It is easy to verify that the resulting coloring is a required 
 \textnormal{PCF} $L$-coloring of $G$, a
contradiction.

Now suppose that the component $F$ of $G-S$ containing $u$ is not an exceptional $C_5$-component. If $\xi=u$, then $u$ has degree $1$ in $G-S$ and $G-S$ has a \textnormal{PCF} $L$-coloring $\varphi$.
Choose $\varphi(w)\in L(w)\setminus \{\varphi(\xi)\}$ and $\varphi(v)\in L(v)\setminus \{\varphi(\xi),\varphi(w)\}$. 
It is easy to verify that the resulting coloring is a \textnormal{PCF} $L$-coloring of $G$, a
contradiction. So $\xi\neq u$ and $d_{G-S}(u)=2$.
We can choose an admissible coloring $\varphi$ of $G-S$ where the two neighbors of $u$ have different colors. Here $\varphi^*(\xi)$ is well defined. 
Choose \[ \varphi(w)\in L(w)\setminus \{\varphi(\xi),\varphi(u),\varphi^*(\xi)\} \] and then \[ \varphi(v)\in L(v)\setminus \{\varphi(\xi),\varphi(w),\varphi(u)\}. \]
Both choices are possible because $|L(v)|=|L(w)|=4$. The coloring
is proper, and  all vertices in $\{u,v,w,\xi\}$ have their own unique colors. Therefore the resulting coloring is a \textnormal{PCF} $L$-coloring of $G$, a contradiction. 


\medskip
\noindent\textbf{(ii)}
Suppose that a thread incident with a core vertex contains at least
four $2$-vertices. Let $v_1,v_2,v_3,v_4$ be four consecutive
$2$-vertices starting from a core anchor, and let $u$ and $w$ be
the neighbors of $v_1$ and $v_4$, respectively, outside
$\{v_1,v_2,v_3,v_4\}$. Put
$S=\{v_1,v_2,v_3,v_4\}$ and take an admissible $L$-coloring
$\varphi$ of $G-S$.

By \textbf{Lemma~\ref{lem:leaf-neighbor}}, $d_G(u)\ge 2$ and $d_G(w)\ge 2$. Furthermore,  $d_G(u)=d_G(w) \ge 3$ when $u=w$, since $G\ncong C_5$. So any endpoint $x\in\{u,w\}$ is non-isolated in $G-S$ and $\varphi^*(x)$ is well defined.

Set
\[
\begin{aligned}
L^*(v_1)&=L(v_1)\setminus
\{\varphi(u),\varphi^*(u)\},&
L^*(v_2)&=L(v_2)\setminus\{\varphi(u)\},\\
L^*(v_3)&=L(v_3)\setminus\{\varphi(w)\},&
L^*(v_4)&=L(v_4)\setminus
\{\varphi(w),\varphi^*(w)\}.
\end{aligned}
\]
Then $|L^*(v_1)|,|L^*(v_4)|\ge2$ and
$|L^*(v_2)|,|L^*(v_3)|\ge3$. Choose a $2$-element subset
$M_1\subseteq L^*(v_1)$, and then choose successively
\[
\varphi(v_2)\in L^*(v_2)\setminus M_1,
\qquad
\varphi(v_4)\in L^*(v_4)\setminus\{\varphi(v_2)\},
\]
\[
\varphi(v_3)\in
L^*(v_3)\setminus\{\varphi(v_2),\varphi(v_4)\},
\qquad
\varphi(v_1)\in
M_1\setminus\{\varphi(v_3)\}.
\]
All choices are possible and guarantee that 
\[
\varphi(v_1)\neq\varphi(u),\quad
\varphi(v_2)\neq\varphi(v_1),\quad
\varphi(v_3)\neq\varphi(v_2),\quad
\varphi(v_4)\neq\varphi(w),
\]
and 
\[
\varphi(v_2)\neq\varphi(u),\quad
\varphi(v_1)\neq\varphi(v_3),\quad
\varphi(v_2)\neq\varphi(v_4),\quad
\varphi(v_3)\neq\varphi(w).
\]
Furthermore, for each vertex $q\in\{u,w\}$,  $\varphi^*(q)$ is still its unique color. 
 Hence the resulting coloring is a \textnormal{PCF} $L$-coloring of $G$, a
contradiction.


\medskip
\noindent\textbf{(iii)}
Suppose that a core $2$-vertex does not lie on a thread with two
distinct core $3^+$-anchors. Let $Q$ be its component of the
subgraph of $G$ induced by the $2$-vertices. The graph $Q$ is a path
or a cycle. Every cycle other than $C_5$ is \textnormal{PCF} $\kappa_G$-choosable by \textbf{Theorem~\ref{sub}}. Hence, together with (ii), $Q$ is a path and $|V(Q)|\le 3$.



Write $Q=v_1\cdots v_t$, where $1\le t\le3$. By maximality of
$Q$ and \textbf{Lemma~\ref{lem:leaf-neighbor}}, each vertex outside $Q$ adjacent to an endpoint of $Q$ has degree at least $3$. By the assumption, both endpoints of $Q$ are adjacent to the same vertex $u$. Since $G$ is
simple, $t\in\{2,3\}$. Put $S=\{v_1,\ldots,v_t\}$. The vertex $u$
has two neighbors in $S$ and therefore cannot be the distinguished
vertex of an exceptional $C_5$-component. Thus an admissible coloring $\varphi$ of $G-S$ may be chosen so that
$u$ has a unique color $\varphi^*(u)$. 
If $t=2$, choose
\[
\varphi(v_2)\in
L(v_2)\setminus\{\varphi(u),\varphi^*(u)\},
\]
\[
\varphi(v_1)\in
L(v_1)\setminus
\{\varphi(u),\varphi^*(u),\varphi(v_2)\}.
\]
If $t=3$, choose
\[
\varphi(v_3)\in
L(v_3)\setminus\{\varphi(u),\varphi^*(u)\},
\]
\[
\varphi(v_1)\in
L(v_1)\setminus
\{\varphi(u),\varphi^*(u),\varphi(v_3)\},
\]
\[
\varphi(v_2)\in
L(v_2)\setminus
\{\varphi(u),\varphi(v_1),\varphi(v_3)\}.
\]
The resulting coloring is 
a \textnormal{PCF}
$L$-coloring of $G$, a contradiction.
\end{proof}

We now begin to use \textnormal{PCF}-representative systems. 
Based on the requirements of a proper $L$-coloring and that all vertices except one vertex possess unique colors, we define the available color set for each vertex in the so‑called vertex‑deleted set $S$. 
The remaining work is to find a \textnormal{PCF}-representative system.  If no such system
exists, \textbf{Lemma~\ref{lem:pcf-representative-system}} forces many
of the available-color sets to be singletons, and a double-counting
argument yields a contradiction. We shall apply this scheme repeatedly.

\begin{lemma}\label{lem:structure-3}
Let $k\ge4$, and let $u$ be a core $k$-vertex in $H$. If
\(n_{\mathrm{ring}}(u)=k-2\), then $u$ has a core $3^+$-neighbor.
\end{lemma}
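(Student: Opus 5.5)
We argue by contradiction. Suppose $u$ is a core $k$-vertex with $k\ge4$, $n_{\mathrm{ring}}(u)=k-2$, and no core $3^+$-neighbor. Then in $G$ the vertex $u$ has $k-2$ leaf neighbors $x_1,\dots,x_{k-2}$, and its two remaining neighbors $y_1,y_2$ are $2$-vertices; they cannot be $1$-vertices because $n_{\mathrm{ring}}(u)=k-2$ exactly. By Lemma~\ref{lem:structure-1}(iii), each $y_j$ lies on a thread whose anchors are distinct core $3^+$-vertices, one of which is $u$. Let $z_j$ be the neighbor of $y_j$ other than $u$; it is either a $2$-vertex or a $3^+$-vertex distinct from $u$. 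We take $S=\{u,x_1,\dots,x_{k-2}\}$ and choose an admissible coloring $\varphi$ of $G-S$. The $y_j$ become $1$-vertices in $G-S$, so $\varphi^*(y_j)=\varphi(z_j)$. If the component containing $y_j$ is a single vertex, $y_j$ is isolated; this cannot happen because $z_j$ exists. We may assume no $y_j$ is the distinguished vertex of an exceptional $C_5$, because a $1$-vertex of $G-S$ is not on a $C_5$ component.

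Next we choose $\varphi(u)\in L(u)\setminus\{\varphi(y_1),\varphi(y_2),\varphi(z_1),\varphi(z_2)\}$. This removes at most $4$ colors, and $|L(u)|=k+1\ge5$, so a choice exists. Avoiding $\varphi(z_j)=\varphi^*(y_j)$ is not actually needed for $y_j$ itself, since $y_j$ has exactly two neighbors $u$ and $z_j$. For $y_j$ a unique color exists as soon as $\varphi(u)\ne\varphi(z_j)$, so we keep that requirement. Whenever $z_j$ is non-isolated in $G-S$ its unique color is unaffected, because $u$ is not adjacent to $z_j$. The freedom in choosing $\varphi(u)$ is at least $k+1-4=k-3\ge1$. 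We will keep this whole residual set $A_u$, of size at least $k-3$, rather than fixing $\varphi(u)$ immediately.

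Then we color the leaves. Each $x_i$ needs only $\varphi(x_i)\ne\varphi(u)$, since $x_i$'s unique color is $\varphi(u)$ automatically. The remaining task is to give $u$ a unique color among $(\varphi(x_1),\dots,\varphi(x_{k-2}),\varphi(y_1),\varphi(y_2))$. Once $\varphi(u)$ is fixed, the sets $A_i=L(x_i)\setminus\{\varphi(u)\}$ have size at least $1$. We apply Lemma~\ref{lem:pcf-representative-system} with $B=(\varphi(y_1),\varphi(y_2))$. If $\varphi(y_1)\ne\varphi(y_2)$, then $s=2$, $r=0$. If $q<2$ we are done. Otherwise, at least two leaf lists collapse to singletons, which means $\varphi(u)\in L(x_i)$ for at least two indices. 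Failing that, $s=0$ and $r=1$ and we need $D\ge2$, i.e.\ some $x_i$ with $\varphi(u)\notin L(x_i)$, or else $\lfloor q/2\rfloor$ must be handled.

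The main obstacle is precisely this: choose $\varphi(u)\in A_u$ so that condition (2) of Lemma~\ref{lem:pcf-representative-system} holds. We plan a double-counting argument. If condition (2) fails for every choice $c\in A_u$, then each such $c$ lies in at least $\max(2,\dots)$ of the $2$-sets $L(x_i)$. Counting the incidences $\sum_i|L(x_i)\cap A_u|\le 2(k-2)$ against $|A_u|\cdot(\text{required multiplicity})$ should give a contradiction for $k\ge4$. Where the count is tight, we will also use the freedom of recoloring $y_1,y_2$, or of picking the admissible coloring so that $\varphi(y_1)\ne\varphi(y_2)$, to enlarge $s$. A fallback option is to include $y_1,y_2$ in $S$ as well, with lists of size $4$ minus the constraints from $z_j$ and $\varphi^*(z_j)$, which leaves $2$ choices each and adds flexibility. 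We expect this case analysis to be the delicate part.
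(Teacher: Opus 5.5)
Your main line does not go through, and it breaks exactly at the step you defer to ``delicate case analysis.'' With $S=\{u,x_1,\dots,x_{k-2}\}$, the colors $\beta_j=\varphi(y_j)$ are frozen by the admissible coloring, so $u$ has only $|A_u|\ge k-3$ choices, and each leaf set $L(x_i)\setminus\{c\}$ has size at most $2$.

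Whether $\beta_1\ne\beta_2$ ($s=2$, $r=0$) or $\beta_1=\beta_2$ ($s=0$, $r=1$), failure of condition (2) of Lemma~\ref{lem:pcf-representative-system} only forces $q_c\ge2$. Your double count therefore gives $2|A_u|\le 2(k-2)$, which is compatible with $|A_u|\ge k-3$, so there is no contradiction.

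This is not just slack in the counting. For $k=4$, let $L(u)=\{c,\beta_1,\beta_2,\varphi(z_1),\varphi(z_2)\}$ with five distinct colors, $L(x_1)=\{c,\beta_1\}$ and $L(x_2)=\{c,\beta_2\}$. Then $\varphi(u)=c$ is forced, the leaves are forced to $\beta_1,\beta_2$, and $N_G(u)$ carries $\beta_1,\beta_2,\beta_1,\beta_2$ with no unique color. So the given $\varphi$ does not extend. If $\beta_1=\beta_2$, take $A_u=L(x_1)=L(x_2)=\{c_1,c_2\}$ for the same failure.

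Your proposed repairs are not available as stated. Minimality only hands you \emph{some} admissible coloring of $G-S$, so you cannot demand $\varphi(y_1)\ne\varphi(y_2)$; the first example shows this would not help anyway. Recoloring $y_j$ in place can destroy the unique color of $z_j$, which may be $\varphi(y_j)$ itself.

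The missing step is your fallback. Once you commit to it, no case analysis is needed, and this is the paper's argument.

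\begin{itemize}
\item Delete $S=\{u,y_1,y_2,x_1,\dots,x_{k-2}\}$. Here $y_1y_2\notin E(G)$ by \textbf{Lemma~\ref{lem:structure-1}(iii)}. Each $z_j$ is non-isolated in $G-S$ by \textbf{Lemma~\ref{lem:leaf-neighbor}}, and also by \textbf{Lemma~\ref{lem:structure-1}(iii)} when $z_1=z_2$.
\item Now $u$ only has to avoid $\varphi(z_1)$ and $\varphi(z_2)$, so $C=L(u)\setminus\{\varphi(z_1),\varphi(z_2)\}$ satisfies $|C|\ge k-1$.
\item Each $y_j$ gets $P_j=L(y_j)\setminus\{\varphi(z_j),\varphi^*(z_j)\}$ with $|P_j|\ge2$. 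Avoiding $\varphi^*(z_j)$ preserves the unique color of $z_j$, even if $z_1=z_2$.
\item The sequence $B$ is empty.
\end{itemize}

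For $\alpha\in C$, suppose the $k$ sets $L(x_i)\setminus\{\alpha\}$ and $P_j\setminus\{\alpha\}$ have no \textnormal{PCF}-representative system. With $r=s=0$, the lemma gives $D_\alpha\le\lfloor q_\alpha/2\rfloor$, hence $q_\alpha\ge4$; if $D_\alpha=1$, then $q_\alpha=k\ge4$ directly. Each of the $k$ neighbors of $u$ is a singleton for at most two values of $\alpha$. This gives $4(k-1)\le\sum_{\alpha}q_\alpha\le 2k$, which is impossible.

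Freeing $y_1,y_2$ does two things at once. It enlarges $u$'s palette from $k-3$ to $k-1$ colors, and it removes the frozen entries from $B$. That is what makes each failing $\alpha$ cost four singleton incidences instead of two.
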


\begin{figure}[htbp]
\centering
\begin{tikzpicture}[scale=1.05]
\tikzset{
deleted vertex/.style={draw,shape=circle,fill=white,inner sep=1.8pt},
retained vertex/.style={draw,shape=circle,fill=black,inner sep=1.8pt}
}
\node[deleted vertex,label=below:$u$] (u) at (0,0) {};
\node[deleted vertex,label=below:$x_1$] (x1) at (1.6,1.0) {};
\node[deleted vertex,label=above:$x_2$] (x2) at (1.6,-1.0) {};
\node[retained vertex,label=below:$y_1$] (y1) at (3.2,1.0) {};
\node[retained vertex,label=above:$y_2$] (y2) at (3.2,-1.0) {};
\node[deleted vertex,label=left:$v_1$] (v1) at (-1.7,1.2) {};
\node[draw=none] at (-2.0,0) {$\vdots$};
\node[deleted vertex,label=left:$v_{k-2}$] (vk) at (-1.7,-1.2) {};
\draw (u)--(x1)--(y1);
\draw (u)--(x2)--(y2);
\draw (u)--(v1);
\draw (u)--(vk);
\end{tikzpicture}
\caption{Configuration excluded in \textbf{Lemma~\ref{lem:structure-3}}. Hollow
vertices are deleted, and solid vertices remain colored.}
\label{fig:structure-3}
\end{figure}

\begin{proof}
Suppose, to the contrary, that $u$ is a core $k$-vertex in
$H$, where $k\ge4$, with $n_{\mathrm{ring}}(u)=k-2$ and no core
$3^+$-neighbor. In $G$, the $k-2$ ring neighbors of $u$ correspond
to $1$-neighbors $v_1,\ldots,v_{k-2}$. The two remaining neighbors
of $u$ are $2$-vertices, denote them by $x_1$ and $x_2$.

By \textbf{Lemma~\ref{lem:structure-1} (iii)}, $x_1x_2\notin E(G)$.
For $i\in[2]$,
let $y_i$ be the neighbor of $x_i$ distinct from $u$. Put
\(S=\{u,x_1,x_2,v_1,\ldots,v_{k-2}\}\), and take an admissible
$L$-coloring $\varphi$ of $G-S$. 
By \textbf{Lemma~\ref{lem:leaf-neighbor}}, $d_G(y_1)\ge 2$ and  $d_G(y_2)\ge 2$. Furthermore, by \textbf{Lemma~\ref{lem:structure-1} (iii)},  $d_G(y_1)=d_G(y_2)\ge 3$ when $y_1=y_2$. Thus,  for $i\in[2]$,  each $y_i$ is non-isolated in $G-S$, and  
$\varphi^*(y_i)$ is defined.

Define \(C=L(u)\setminus\{\varphi(y_1),\varphi(y_2)\}\). Since
$|L(u)|=k+1$, we have $|C|\ge k-1$. For $i\in[2]$, define
\(P_i=L(x_i)\setminus\{\varphi(y_i),\varphi^*(y_i)\}\). Then
$|P_i|\ge2$.

Fix $\alpha\in C$. For $j\in[k-2]$, define
\(A_\alpha(v_j)=L(v_j)\setminus\{\alpha\}\), and for $i\in[2]$,
define \(A_\alpha(x_i)=P_i\setminus\{\alpha\}\). All these sets
are nonempty. The choice of these sets follows the requirements for a proper $L$-coloring of $G$ and ensures that every vertex other than $u$ has a unique color.

We claim that, 
for any $\alpha\in C$, 
\[
\mathcal{A}_\alpha=
\bigl(
A_\alpha(v_1),\ldots,A_\alpha(v_{k-2}),
A_\alpha(x_1),A_\alpha(x_2)
\bigr)
\]
has no \textnormal{PCF}-representative system with respect to
the empty sequence. Otherwise, let \((a_1,\ldots,\allowbreak a_{k-2},\allowbreak b_1,b_2)\)
be such a system and extend $\varphi$ by setting
\[
\varphi(u)=\alpha,\qquad
\varphi(v_j)=a_j\quad(j\in[k-2]),\qquad
\varphi(x_i)=b_i\quad(i\in[2]).
\]
$u$ has a unique color now. Hence the resulting coloring is 
a \textnormal{PCF} $L$-coloring of $G$, a contradiction. 

For the fixed color $\alpha$, let \(q_\alpha= \left| \left\{ z\in N_G(u):|A_\alpha(z)|=1 \right\} \right|\)
and \(D_\alpha=\max_{z\in N_G(u)}|A_\alpha(z)|\).
Applying \textbf{Lemma~\ref{lem:pcf-representative-system}} with the
lower bounds $m_z=|A_\alpha(z)|$ and with the empty sequence $B$ yields $D_\alpha\le r+s+\left\lfloor\frac{q_\alpha-s}{2}\right\rfloor$, where $r=s=0$. Therefore, $q_\alpha=k\ge 4$ when $D_\alpha=1$, and  $q_\alpha\ge2D_\alpha\ge4$ when $D_\alpha\ge 2$. 


We now double-count the pairs $(\alpha,z)$ satisfying
$\alpha\in C$, $z\in N_G(u)$, and $|A_\alpha(z)|=1$. The preceding
argument gives at least $4|C|\ge4(k-1)$ such pairs.

For each $v_j$, the set
$A_\alpha(v_j)=L(v_j)\setminus\{\alpha\}$ is a singleton only when
$\alpha\in L(v_j)$, so $v_j$ occurs in at most two such pairs. For
$x_i$, the set $A_\alpha(x_i)=P_i\setminus\{\alpha\}$ can be a
singleton only if $|P_i|=2$ and $\alpha\in P_i$, so $x_i$ also
occurs in at most two such pairs. Since $u$ has $k$ neighbors, the
total number of pairs is at most $2k$. Consequently,
\[
4(k-1)\le2k,
\]
which is impossible for $k\ge4$. This contradiction proves that $u$ has a core
$3^+$-neighbor.
\end{proof}

\begin{lemma}\label{lem:structure-4}
For every integer $k\ge2$, every core $k$-vertex $u$ in $H$
satisfies \(n_{\mathrm{ring}}(u)+\allowbreak n_{2^+\text{-thread}}(u)\allowbreak\le k-2\).
\end{lemma}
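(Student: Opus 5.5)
For $k\le3$ the statement follows directly from earlier results. A core $2$-vertex $u$ has no ring neighbor, because by \textbf{Lemma~\ref{lem:leaf-neighbor}} every $1$-vertex of $G$ has a $4^+$-neighbor. It is also not an anchor of any thread, so the left-hand side is $0=k-2$. A core $3$-vertex has no ring neighbor for the same reason, and it is incident with no $2^+$-thread by \textbf{Lemma~\ref{lem:structure-1} (i)}, so the left-hand side is $0\le1$.

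So let $k\ge4$ and suppose, to the contrary, that $n_{\mathrm{ring}}(u)+n_{2^+\text{-thread}}(u)\ge k-1$. I would work in $G$, as for \textbf{Lemma~\ref{lem:structure-3}}. The neighbors of $u$ then split as follows.
\begin{itemize}
\item $1$-neighbors $v_1,\dots,v_p$.
\item First vertices $x_1,\dots,x_m$ of $2^+$-threads, with $p+m\ge k-1$. For each $i$, let $w_i$ be the second $2$-vertex on the thread and $\xi_i$ the neighbor of $w_i$ other than $x_i$.
\item At most one further neighbor $y$.
\end{itemize}
Put $S=\{u,v_1,\dots,v_p,x_1,\dots,x_m,w_1,\dots,w_m\}$ and take an admissible coloring $\varphi$ of $G-S$. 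Where a $y$ or $\xi_i$ lies on an exceptional $C_5$-component, I would take it as the distinguished vertex, as in \textbf{Lemma~\ref{lem:structure-1} (i)}. By \textbf{Lemma~\ref{lem:leaf-neighbor}} and \textbf{Lemma~\ref{lem:structure-1} (ii), (iii)}, each $\xi_i$ and $y$ is non-isolated in $G-S$, so $\varphi^*$ is defined on them; a few degenerate coincidences ($\xi_i=u$ or $\xi_i=y$) need a short separate check. Let $C=L(u)\setminus\{\varphi(y),\varphi^*(y)\}$, so $|C|\ge k-1$; if $y$ does not exist, then $|C|=k+1$. Let $P_i=L(w_i)\setminus\{\varphi(\xi_i),\varphi^*(\xi_i)\}$, so $|P_i|\ge2$.

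Fix $\alpha\in C$ as the color of $u$. Set $A_\alpha(v_j)=L(v_j)\setminus\{\alpha\}$. For a thread neighbor, the constraints are as follows.
\begin{itemize}
\item $x_i$ must avoid $\alpha$ and $\varphi(w_i)$.
\item $x_i$ must differ from $\varphi(\xi_i)$, so that $w_i$ sees two distinct colors and hence has a unique color.
\item $w_i$ must lie in $P_i$ and differ from $\alpha$, so that $x_i$ has a unique color.
\end{itemize}
Accordingly, let $A_\alpha(x_i)$ be the set of colors $c\in L(x_i)\setminus\{\alpha,\varphi(\xi_i)\}$ for which some $w\in P_i\setminus\{\alpha,c\}$ exists; after choosing $c$, we fix such a $w$. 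This set is always nonempty. Moreover, $|A_\alpha(x_i)|\ge2$ unless $\alpha\in P_i$ and $|P_i|=2$, or $\alpha\in L(x_i)$ together with some accidental coincidence. I expect to show that $x_i$ is a singleton for at most two values of $\alpha$, just as $v_j$ is a singleton only when $\alpha\in L(v_j)$ (at most two values). Exactly as in \textbf{Lemma~\ref{lem:structure-3}}, a \textnormal{PCF}-representative system for $\mathcal A_\alpha$ with respect to $B=(\varphi(y))$ (or $B$ empty) would give a \textnormal{PCF} $L$-coloring of $G$. Hence no such system exists for any $\alpha\in C$.

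Now apply \textbf{Lemma~\ref{lem:pcf-representative-system}}. If $y$ exists, then $s=1$ and $r=0$, and failure forces $q_\alpha\ge1$ and $D_\alpha\le1+\lfloor (q_\alpha-1)/2\rfloor$. This gives $q_\alpha\ge3$ both when $D_\alpha\ge2$ and when $D_\alpha=1$ (since then $q_\alpha=k-1\ge3$). If $y$ is absent, then $q_\alpha\ge4$ as in \textbf{Lemma~\ref{lem:structure-3}}. Double counting the pairs $(\alpha,z)$ with $|A_\alpha(z)|=1$ then yields $3(k-1)\le2(k-1)$, respectively $4(k+1)\le2k$, both impossible. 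The main obstacle I anticipate is the bookkeeping for $A_\alpha(x_i)$. There I must verify that the deferred choice of $w_i$ really makes $x_i$ a singleton for at most two colors $\alpha$; if a third bad $\alpha$ can arise, I would fall back on the cruder bound of three. Since $|C|$ has size about $k$ while $q_\alpha\ge3$, the count still seems to close, because $3(k-1)$ versus $3(k-1)$ would need only one more unit of slack, obtainable from $|C|\ge k$ when $y$ is a $2$-vertex. I also need to handle the exceptional $C_5$-components around the $\xi_i$ correctly.
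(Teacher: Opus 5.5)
There is a genuine gap where you assert that every $\xi_i$ (and $y$) is non-isolated in $G-S$ ``by Lemma~\ref{lem:leaf-neighbor} and Lemma~\ref{lem:structure-1}(ii),(iii)''. Those lemmas only settle the easy cases. A third $2$-vertex of a $3$-thread keeps its neighbor toward the far anchor, and Lemma~\ref{lem:structure-1}(iii) excludes $\xi_i=u$. Now let $\xi_i=z$ be the far anchor of a $2$-thread. Its neighbors in $S$ are $u$ (if $uz\in E(G)$, which is exactly the case $z=y$) and every $w_j$ with $\xi_j=z$. Nothing you cite prevents $N_G(z)\subseteq S$; for instance, $u$ and $z$ could be joined by $m_z\ge4$ internally disjoint paths of length $3$. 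In that case $\varphi^*(z)$ does not exist, so your sets $P_j$ (and $C$, if $z=y$) are undefined. Moreover, $z$ would then have to obtain its unique color from the simultaneously chosen $w_j$'s, which your scheme does not control.

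The missing ingredient is the sparsity hypothesis. If $z$ were isolated, then $d_G(z)=m_z+\varepsilon$, where $\varepsilon=1$ iff $uz\in E(G)$. The subgraph on $u$, $z$ and the $2m_z$ internal vertices has $2m_z+2$ vertices and at least $3m_z+\varepsilon$ edges. Hence $\operatorname{mad}(G)<\frac{12}{5}$ gives $6m_z+10\varepsilon<24$, so $d_G(z)\le3$. Since $z$ anchors a $2$-thread, $d_G(z)=3$, contradicting Lemma~\ref{lem:structure-1}(i). This is exactly the check the paper performs, and it is the one place where its proof of this lemma invokes $\operatorname{mad}(G)<\frac{12}{5}$ directly. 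The coincidence $\xi_i=y$ that you flagged is harmless once non-isolation is known.

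With that patched, the rest of your argument is correct and in fact shorter than the paper's. The bound you only ``expect'' does hold. Since $|P_i|\ge2$, if $\alpha\notin P_i$ or $|P_i|\ge3$, then $|P_i\setminus\{\alpha\}|\ge2$. So every $c\in L(x_i)\setminus\{\alpha,\varphi(\xi_i)\}$ admits a compatible $w$, and $|A_\alpha(x_i)|\ge2$. Thus a singleton forces $|P_i|=2$ and $\alpha\in P_i$, which is at most two values; there is no ``accidental coincidence'' case. Consequently $3(k-1)\le\sum_{\alpha\in C}q_\alpha\le2(k-1)$, or $4(k+1)\le2k$ when $y$ is absent, is an immediate contradiction.

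The paper instead fixes $c_i^\alpha\in L(w_i)\setminus\{\alpha,\varphi(z_i),\varphi^*(z_i)\}$ in advance. Its thread sets can then be singletons for three values of $\alpha$, so its count $3(k-1)\le3(k-1)-h$ is tight. It must then extract $h=0$, $|C|=k-1$ and $A_i^\alpha\subseteq C$ before observing that $\varphi(x)$ is a unique color. Your deferred choice of $w_i$ removes that equality analysis. Your stated fallback to the bound three would not close on its own, since $|C|$ can equal $k-1$ when $y$ is a $3^+$-vertex. So you should state and use the two-value bound explicitly.
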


\begin{figure}[htbp]
\centering
\begin{tikzpicture}[scale=0.92]
\tikzset{
deleted vertex/.style={draw,shape=circle,fill=white,inner sep=1.8pt},
retained vertex/.style={draw,shape=circle,fill=black,inner sep=1.8pt}
}
\node[deleted vertex,label=below:$u$] (u) at (0,0) {};
\node[retained vertex,label=below:$x$] (x) at (2.4,0) {};
\node[deleted vertex,label=left:$v_i$] (vi) at (-1.7,1.45) {};
\node[deleted vertex,label=below:$v_j$] (vj) at (-1.25,-1.35) {};
\node[deleted vertex,label=below:$w_j$] (wj) at (0.25,-2.05) {};
\node[retained vertex,label=below:$z_j$] (zj) at (1.85,-2.05) {};
\node[draw=none] at (-1.85,0.35) {$\vdots$};
\node[draw=none] at (-0.65,0.95) {$\cdots$};
\draw (u)--(x);
\draw (u)--(vi);
\draw (u)--(vj)--(wj)--(zj);
\end{tikzpicture}
\caption{Configuration excluded in \textbf{Lemma~\ref{lem:structure-4}}. Each selected
incidence at $u$ is either an edge $uv_i$ or a $2^+$-thread beginning
with $v_jw_j$. Hollow vertices are deleted, and solid vertices remain colored.}
\label{fig:structure-4}
\end{figure}

\begin{proof}
If $k=2$, then $u$ is not an anchor of a thread and has no ring
neighbor by \textbf{Lemma~\ref{lem:ring-structure}}\textbf{(ii)}. If $k=3$, then
\textbf{Lemma~\ref{lem:structure-1}}\textbf{(i)} excludes a
$2^+$-thread and \textbf{Lemma~\ref{lem:ring-structure}}\textbf{(ii)} excludes a
ring neighbor. Thus the assertion holds for $k\le3$.

Assume $k\ge4$ and, to the contrary, that
\(n_{\mathrm{ring}}(u)+n_{2^+\text{-thread}}(u)\ge k-1\).
Choose $k-1$ distinct incidences at $u$, each represented either by
an edge from $u$ to a ring $3$-vertex or by a $2^+$-thread incident
with $u$. Let $I_1$ and
$I_{\mathrm{thread}}$ be the corresponding index sets, so that
$|I_1\cup I_{\mathrm{thread}}|=k-1$, and put
$h=|I_1|$.

For $i\in I_1$, let $v_i$ be the $1$-neighbor of $u$
in $G$ corresponding to the selected ring neighbor. For
$i\in I_{\mathrm{thread}}$, let $v_i,w_i$ be the first two vertices
of the selected thread starting from $u$, and let $z_i$ be the
neighbor of $w_i$ distinct from $v_i$. Let $x$ be the remaining
neighbor of $u$. Thus \(N_G(u)=\{v_1,\ldots,v_{k-1},x\}\).
Set
\[
S=\{u\}\cup\{v_i:i\in I_1\}
\cup\{v_i,w_i:i\in I_{\mathrm{thread}}\},
\]
and take an admissible $L$-coloring $\varphi$ of $G-S$.

We first verify that every $z_i$ is non-isolated in $G-S$. If the
selected thread is a $3$-thread, then $z_i$ is its third $2$-vertex,
and its neighbor toward the other anchor remains outside $S$. Now
suppose that the selected thread is a $2$-thread, so $z_i$ is its
other anchor. Assume for a contradiction that a vertex $z$ arising in this way is
isolated in $G-S$. Let $m$ be the number of selected
$2$-threads with anchors $u$ and $z$, and put
$\varepsilon=1$ if $uz\in E(G)$ and $\varepsilon=0$ otherwise.
All neighbors of $z$ then lie in $S$, and the degree constraints on
the internal $2$-vertices give
\(d_G(z)=m+\varepsilon\).
The subgraph induced by $u,z$ and the $2m$ internal vertices of these
threads has at least $3m+\varepsilon$ edges. Since
$\operatorname{mad}(G)<12/5$,
\[
\frac{2(3m+\varepsilon)}{2m+2}<\frac{12}{5},
\]
which is equivalent to $6m+10\varepsilon<24$. Hence $m\le3$ when
$\varepsilon=0$, and $m\le2$ when $\varepsilon=1$. In either case, $d_G(z)=m+\varepsilon\le3$. Since $z$ is an
anchor of a thread, $d_G(z)\ge3$, and hence $d_G(z)=3$. This
contradicts \textbf{Lemma~\ref{lem:structure-1}}\textbf{(i)},
because $z$ is incident with a $2^+$-thread. Thus
every $z_i$ is non-isolated, and $\varphi^*(z_i)$ is defined.

Put $\beta=\varphi(x)$. If $x$ is non-isolated in $G-S$, define
\(C=L(u)\setminus\{\beta,\varphi^*(x)\}\);
otherwise define $C=L(u)\setminus\{\beta\}$. In either case,
$|C|\ge k-1$.

Fix $\alpha\in C$. For every $i\in I_{\mathrm{thread}}$, choose
\[
c_i^\alpha\in
L(w_i)\setminus
\{\alpha,\varphi(z_i),\varphi^*(z_i)\}.
\]
This is possible because $|L(w_i)|=4$. When $z_i=z_j$ for distinct indices $i,j$, the corresponding
colors $c_i^\alpha$ and $c_j^\alpha$ are different from the same
color $\varphi^*(z_i)$.

For $i\in[k-1]$, define
\[
A_i^\alpha=
\begin{cases}
L(v_i)\setminus\{\alpha\},
& \text{if }i\in I_1,\\[4pt]
L(v_i)\setminus
\{\alpha,c_i^\alpha,\varphi(z_i)\},
& \text{if }i\in I_{\mathrm{thread}}.
\end{cases}
\]
Each set $A_i^\alpha$ is nonempty. Let $B=(\beta)$.

We claim that
$\mathcal A_\alpha=(A_1^\alpha,\ldots,A_{k-1}^\alpha)$ admits no
\textnormal{PCF}-representative system with respect to $B$. Suppose, for contradiction, that such a system exists. Set 
$\varphi(u)=\alpha$, $\varphi(w_i)=c_i^\alpha$ for each $i\in I_{\mathrm{thread}}$, and assign the representatives to $v_1,\ldots,v_{k-1}$ accordingly.  
The resulting coloring is a \textnormal{PCF} $L$-coloring of $G$, a contradiction.

For $\alpha\in C$, set
\[
q_\alpha=|\{i\in[k-1]:|A_i^\alpha|=1\}|,
\qquad
D_\alpha=\max_{i\in[k-1]}|A_i^\alpha|.
\]
Applying
\textbf{Lemma~\ref{lem:pcf-representative-system}} to $\mathcal A_\alpha \circ B$  yields that  $D_\alpha\le r+s+\left\lfloor\frac{q_\alpha-s}{2}\right\rfloor$, where $s=1$ and $r=0$.

Let
\[
C_1=\{\alpha\in C:D_\alpha=1\},\quad
C_2=\{\alpha\in C:D_\alpha=2\},\quad
C_3=\{\alpha\in C:D_\alpha\ge3\}.
\]
If $\alpha\in C_1$, then $q_\alpha=k-1$; if
$\alpha\in C_2$, then $q_\alpha\ge3$; and if
$\alpha\in C_3$, then $q_\alpha\ge5$. Therefore
\begin{equation}\label{eq:structure-4-singleton-lower}
\sum_{\alpha\in C}q_\alpha
\ge3|C|+(k-4)|C_1|+2|C_3|.
\end{equation}

For each $i\in I_1$, the set $A_i^\alpha$ is a
singleton for at most two colors $\alpha\in C$. For each
$i\in I_{\mathrm{thread}}$, the set $A_i^\alpha$ is a singleton for
at most three colors, because $|A_i^\alpha|=1$ implies
$\alpha\in L(v_i)\setminus\{\varphi(z_i)\}$. Hence
\begin{equation}\label{eq:structure-4-singleton-upper}
\sum_{\alpha\in C}q_\alpha
\le2h+3(k-1-h)=3(k-1)-h.
\end{equation}
Since $|C|\ge k-1$, \eqref{eq:structure-4-singleton-lower} and \eqref{eq:structure-4-singleton-upper} imply
\[
3(k-1)+(k-4)|C_1|+2|C_3|\le3(k-1)-h.
\]
Thus $h=0$, $C_3=\emptyset$, and, when $k\ge5$,
$C_1=\emptyset$. Equality holds throughout, so $|C|=k-1$,
$q_\alpha=3$ for every $\alpha\in C$. Furthermore, for each $i\in I_1$, the set $A_i^\alpha$ is a singleton for exactly two colors $\alpha\in C$,  and for every $i\in I_{\mathrm{thread}}$, the set
$A_i^\alpha$ is a singleton for exactly three colors
$\alpha\in C$. 
Hence, $ L(v_i)\subseteq C$ for each $i\in I_1$, and  $L(v_i)\setminus\{\varphi(z_i)\}\subseteq  C$ for each $i\in I_{\mathrm{thread}}$. That is,
\begin{equation}\label{eq:structure-4-available-set-containment}
A_i^\alpha\subseteq C
\end{equation}
holds for each $i\in [k-1]$.  


Fix $\alpha\in C$, color $u$ with $\alpha$, use the colors $c_i^\alpha$ on the vertices $w_i$, and choose arbitrary representatives $a_i\in A_i^\alpha$ for $v_i$. By \eqref{eq:structure-4-available-set-containment}, every $v_i$ receives a color in $C$, while $\beta=\varphi(x)\notin C$. Thus $\beta$ appears exactly once in $N_G(u)$.
Thus the resulting coloring is a \textnormal{PCF} $L$-coloring of $G$, a contradiction that completes the proof.
\end{proof}

\begin{lemmacorollary}\label{cor:structure-4}
Let $u$ be a core $k$-vertex in $H$, where $k\ge2$. Then:
\begin{itemize}
    \item[\textnormal{\textbf{(i)}}]
    $n_{\mathrm{ring}}(u)\le k-2$.

    \item[\textnormal{\textbf{(ii)}}]
    If $n_{\mathrm{ring}}(u)=k-3$, then
    $n_{2^+\text{-thread}}(u)\le1$.

    \item[\textnormal{\textbf{(iii)}}]
    If $n_{\mathrm{ring}}(u)=k-2$, then
    $n_{2^+\text{-thread}}(u)=0$.
\end{itemize}
\end{lemmacorollary}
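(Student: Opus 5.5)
All three statements follow directly from \textbf{Lemma~\ref{lem:structure-4}}, which gives
\[
n_{\mathrm{ring}}(u)+n_{2^+\text{-thread}}(u)\le k-2
\]
for every core $k$-vertex $u$ with $k\ge2$. Throughout, the only additional input is that both quantities $n_{\mathrm{ring}}(u)$ and $n_{2^+\text{-thread}}(u)$ are nonnegative integers.

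For \textbf{(i)}, we drop the nonnegative term $n_{2^+\text{-thread}}(u)$ from the left-hand side of the inequality in \textbf{Lemma~\ref{lem:structure-4}}. This leaves $n_{\mathrm{ring}}(u)\le k-2$ at once.

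For \textbf{(ii)}, we substitute $n_{\mathrm{ring}}(u)=k-3$ into the same inequality. This gives $k-3+n_{2^+\text{-thread}}(u)\le k-2$, hence $n_{2^+\text{-thread}}(u)\le1$.

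For \textbf{(iii)}, we substitute $n_{\mathrm{ring}}(u)=k-2$ instead. This gives $n_{2^+\text{-thread}}(u)\le0$, and since the count is nonnegative, equality $n_{2^+\text{-thread}}(u)=0$ follows.

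The only small points to check are the degenerate cases with $k\le3$. For $k=2$, case (ii) would require $n_{\mathrm{ring}}(u)=-1$, so it is vacuous. For $k=2,3$ the hypotheses of (ii) and (iii) may still hold, but the lemma already covers all $k\ge2$. Hence there is no real obstacle: the corollary is a direct rephrasing of \textbf{Lemma~\ref{lem:structure-4}}.
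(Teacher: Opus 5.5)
Your proposal is correct and takes essentially the paper's approach. The paper states this corollary without a separate proof, as an immediate consequence of \textbf{Lemma~\ref{lem:structure-4}} together with the nonnegativity of $n_{\mathrm{ring}}(u)$ and $n_{2^+\text{-thread}}(u)$, which is exactly your argument.
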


By \textbf{Corollary~\ref{cor:structure-4}}, every core
$3^+$-vertex is of exactly one of the following two types. A core
$3^+$-vertex $v$ is called \emph{good} if
$n_{\mathrm{ring}}(v)\le d_H(v)-3$, and \emph{bad} if
$n_{\mathrm{ring}}(v)=d_H(v)-2$.

\begin{lemma}\label{lem:structure-5}
No bad core vertex is incident with a $1$-thread.
\end{lemma}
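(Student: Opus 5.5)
The plan follows the template of Lemmas~\ref{lem:structure-3} and~\ref{lem:structure-4}: delete a small set around $u$, precolor the rest admissibly, and reduce the existence of a good extension to a \textnormal{PCF}-representative system, applying Lemma~\ref{lem:pcf-representative-system} together with a double count.

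\textbf{Setup.} Let $u$ be a bad core $k$-vertex, so $n_{\mathrm{ring}}(u)=k-2$, and suppose $u$ is an anchor of a $1$-thread. Since ring $3$-vertices have core $4^+$-neighbors by Lemma~\ref{lem:ring-structure}\textbf{(ii)}, we have $k\ge4$. By Lemma~\ref{lem:structure-3}, $u$ has a core $3^+$-neighbor $y$. By Corollary~\ref{cor:structure-4}\textbf{(iii)}, $u$ is incident with no $2^+$-thread. Hence $N_G(u)=\{v_1,\dots,v_{k-2},x,y\}$, where:
\begin{itemize}
\item the $v_j$ are the $1$-neighbors of $u$ in $G$;
\item $x$ is the $2$-vertex of the $1$-thread, whose other neighbor $z$ is a $3^+$-vertex.
\end{itemize}
Possibly $z=y$, which forms a triangle; in that case some exclusions below merge, and the count only improves. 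Put $S=\{u,x,v_1,\dots,v_{k-2}\}$ and take an admissible coloring $\varphi$ of $G-S$. As in Lemma~\ref{lem:structure-4}, both $y$ and $z$ are non-isolated in $G-S$, so $\varphi^*(y)$ and $\varphi^*(z)$ are defined. If $y$ or $z$ is the distinguished vertex of an exceptional $C_5$-component, it has exactly one neighbor in $S$, so the convention applies.

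\textbf{Reduction to representative systems.} Set
\[
C=L(u)\setminus\{\varphi(y),\varphi^*(y),\varphi(z)\}.
\]
Excluding $\varphi(z)$ is needed so that the two neighbors of $x$ get distinct colors. Then $|C|\ge k-2$. For $\alpha\in C$, define
\[
A_\alpha(v_j)=L(v_j)\setminus\{\alpha\},\qquad A_\alpha(x)=L(x)\setminus\{\alpha,\varphi(z),\varphi^*(z)\},
\]
which are all nonempty, and let $B=(\varphi(y))$. A \textnormal{PCF}-representative system for $(A_\alpha(v_1),\dots,A_\alpha(v_{k-2}),A_\alpha(x))$ with respect to $B$ would give a \textnormal{PCF} $L$-coloring of $G$ with $\varphi(u)=\alpha$. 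So by minimality, none exists for any $\alpha\in C$.

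Lemma~\ref{lem:pcf-representative-system} with $s=1$ and $r=0$ then forces $D_\alpha\le1+\lfloor(q_\alpha-1)/2\rfloor$. This gives:
\begin{itemize}
\item $q_\alpha=k-1\ge3$ if $D_\alpha=1$;
\item $q_\alpha\ge3$ if $D_\alpha=2$;
\item $q_\alpha\ge5$ if $D_\alpha\ge3$.
\end{itemize}

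\textbf{Double count.} Count the pairs $(\alpha,w)$ with $\alpha\in C$, $w\in N_G(u)\setminus\{y\}$ and $|A_\alpha(w)|=1$.
\begin{itemize}
\item Each $v_j$ contributes at most $2$, since a singleton requires $\alpha\in L(v_j)$.
\item $x$ contributes at most $2$: a singleton requires $|L(x)\setminus\{\varphi(z),\varphi^*(z)\}|=2$ and $\alpha$ in that set.
\end{itemize}
So $3(k-2)\le 3|C|\le\sum_\alpha q_\alpha\le2(k-1)$, which forces $k\le4$. This already finishes the proof for $k\ge5$.

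\textbf{Main obstacle: the tight case $k=4$.} Here equality must hold throughout:
\begin{itemize}
\item $|C|=2$ and $q_\alpha=3$ for both colors, with no $\alpha$ having $D_\alpha\ge3$;
\item each $v_j$ is a singleton for both colors of $C$, so $L(v_j)=C$;
\item $L(x)\setminus\{\varphi(z),\varphi^*(z)\}=C$.
\end{itemize}
Following the end of Lemma~\ref{lem:structure-4}, every available set is then contained in $C$, while $\varphi(y)\notin C$. So for any $\alpha\in C$ and any choice of representatives, $\varphi(y)$ appears exactly once in $N_G(u)$. This yields a \textnormal{PCF} $L$-coloring and the desired contradiction.

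The delicate part is verifying these equality consequences, especially when $z=y$ or when $\varphi(z)$ coincides with $\varphi(y)$ or $\varphi^*(y)$, in which case $|C|$ is larger and the inequality becomes strict. If the count fails to close there, I would instead keep $\varphi(z)$ in $C$ and handle $\alpha=\varphi(z)$ separately, using $\varphi(u)\ne\varphi(x)$ to secure a unique color for $x$.
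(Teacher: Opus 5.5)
Your proposal is correct and is essentially the paper's proof. It uses the same deleted set $S$, the same $C=L(u)\setminus\{\varphi(y),\varphi^*(y),\varphi(z)\}$, the same available sets with $B=(\varphi(y))$, and the same double count $3(k-2)\le 2(k-1)$ forcing $k=4$. It also has the same tight-case finish: every available set lies in $C$ and $\varphi(y)\notin C$, so $\varphi(y)$ is unique at $u$. Your worry about $z=y$ or coinciding excluded colors is harmless, because then $|C|\ge k-1$ and the count fails strictly, so the fallback you sketch is not needed.
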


\begin{figure}[htbp]
\centering
\begin{tikzpicture}[scale=0.95]
\tikzset{
deleted vertex/.style={draw,shape=circle,fill=white,inner sep=1.8pt},
retained vertex/.style={draw,shape=circle,fill=black,inner sep=1.8pt}
}
\node[deleted vertex,label=below:$u$] (u) at (0,0) {};
\node[deleted vertex,label=below:$t$] (t) at (1.7,0.7) {};
\node[retained vertex,label=below:$v$] (v) at (3.4,0.7) {};
\node[retained vertex,label=below:$x$] (x) at (1.7,-1.25) {};
\node[deleted vertex,label=left:$y_1$] (y1) at (-1.65,1.25) {};
\node[draw=none] at (-1.95,0) {$\vdots$};
\node[deleted vertex,label=left:$y_{k-2}$] (yk) at (-1.65,-1.25) {};
\draw (u)--(t)--(v);
\draw (u)--(x);
\draw (u)--(y1);
\draw (u)--(yk);
\end{tikzpicture}
\caption{Configuration excluded in \textbf{Lemma~\ref{lem:structure-5}}. The vertex
\(u\) is a bad core vertex, \(utv\) is a \(1\)-thread, and \(x\) and
\(v\) may coincide. Hollow vertices are deleted.}
\label{fig:structure-5}
\end{figure}

\begin{proof}
Suppose, to the contrary, that a bad core $k$-vertex $u$ is
incident with a $1$-thread. In $G$, the $k-2$ ring neighbors of $u$
correspond to $1$-neighbors $y_1,\ldots,y_{k-2}$, and the inherited
$1$-thread is $utv$, where $t$ is its $2$-vertex and $v$ is its
other anchor.
Let \(x\) be the remaining neighbor of \(u\). Thus
\(N_G(u)=\{y_1,\ldots,y_{k-2},t,x\}\). 
Since $u$ has a ring neighbor,
\textbf{Lemma~\ref{lem:ring-structure}}\textbf{(ii)} gives $k\ge4$. Moreover,
\textbf{Lemma~\ref{lem:structure-3}} gives a core $3^+$-neighbor of
$u$. Since $t$ is a core $2$-vertex, $x$ is a core $3^+$-vertex.
Notice that $x=v$ is possible.

Set \(S=\{u,t,y_1,\ldots,y_{k-2}\}\),
and take an admissible \(L\)-coloring \(\varphi\) of \(G-S\).
Both \(x\) and \(v\) are non-isolated in \(G-S\). 
Hence \(\varphi^*(x)\) and \(\varphi^*(v)\) are defined. 
Put \(\beta=\varphi(x), \eta=\varphi(v), \theta=\varphi^*(v)\),
and define \(C=L(u)\setminus\{\beta,\eta,\varphi^*(x)\}\).
Repeated colors are deleted only once. Since \(|L(u)|=k+1\),
\begin{equation}\label{eq:structure-5-C-size}
|C|\ge k-2.
\end{equation}

Fix \(\alpha\in C\). For \(i\in[k-2]\), define
\(A_\alpha(y_i)=L(y_i)\setminus\{\alpha\}\), and define
\(A_\alpha(t)=L(t)\setminus\{\alpha,\eta,\theta\}\). All these
sets are nonempty. Let
\(\mathcal A_\alpha=(A_\alpha(y_1),\ldots,A_\alpha(y_{k-2}),A_\alpha(t))\)
and let \(B=(\beta)\).

We claim that $\mathcal A_\alpha$ has no \textnormal{PCF}-representative
system with respect to $B$. Otherwise, set $\varphi(u)=\alpha$ and assign the representatives to  $y_1,\ldots,y_{k-2},t$ accordingly.
Thus the resulting coloring is a \textnormal{PCF} $L$-coloring of $G$, a contradiction. 

For \(\alpha\in C\), put
\[
q_\alpha=
\left|
\left\{
z\in\{y_1,\ldots,y_{k-2},t\}:|A_\alpha(z)|=1
\right\}
\right|
\]
and let \(D_\alpha\) be the maximum size of a set in
\(\mathcal A_\alpha\).
Applying
\textbf{Lemma~\ref{lem:pcf-representative-system}} to $\mathcal A_\alpha \circ B$  yields that  $D_\alpha\le r+s+\left\lfloor\frac{q_\alpha-s}{2}\right\rfloor$, where $s=1$ and $r=0$.
If \(D_\alpha=1\), then $q_\alpha=k-1\ge3$. If
\(D_\alpha=2\), then \(q_\alpha\ge3\). If \(D_\alpha=3\), then 
\(q_\alpha\ge5\). If \(D_\alpha= 4\), then 
\(q_\alpha\ge7\).

For \(j\in[4]\), let \(C_j=\{\alpha\in C:D_\alpha=j\}\). Hence
\begin{equation}\label{eq:structure-5-singleton-lower}
\sum_{\alpha\in C}q_\alpha
\ge3|C|+2|C_3|+4|C_4|.
\end{equation}

For each $1$-vertex $y_i$, the set \(A_\alpha(y_i)\) is a singleton for
at most two colors \(\alpha\in C\). If \(|A_\alpha(t)|=1\), then
\(\alpha,\eta,\theta\) are pairwise distinct colors of \(L(t)\),
so \(\alpha\in L(t)\setminus\{\eta,\theta\}\). Hence
\(A_\alpha(t)\) is also a singleton for at most two colors \(\alpha\in C\). Therefore
\begin{equation}\label{eq:structure-5-singleton-upper}
\sum_{\alpha\in C}q_\alpha
\le2(k-2)+2=2(k-1).
\end{equation}
Combining \eqref{eq:structure-5-C-size}--\eqref{eq:structure-5-singleton-upper}, we obtain
\begin{equation*}
k-4+2|C_3|+4|C_4|\le0.
\end{equation*}
Since \(k\ge4\), equality holds in all three inequalities. Therefore,
\(k=4\), \(C_3=C_4=\emptyset\), and \(|C|=2\). Furthermore, \(q_\alpha=3\) for each \(\alpha\in C\), i.e., all the sets \(A_\alpha(y_1), A_\alpha(y_2),  A_\alpha(t)\) are singletons. Write \(C=\{c_1,c_2\}\).
It follows that \(L(y_1)=L(y_2)=C\) and \(L(t)=C\cup\{\eta,\theta\}\).

Now set
\(\varphi(u)=c_1\) and
\(\varphi(y_1)=\varphi(y_2)=\varphi(t)=c_2\).
Since \(\beta\notin C\), the color \(\beta=\varphi(x)\) appears exactly once in \(N_G(u)\). 
This gives a \textnormal{PCF} \(L\)-coloring of \(G\), a contradiction.
\end{proof}

The following lemma shows the two-color flexibility at a bad core vertex. It will be used in \textbf{Lemmas~\ref{lem:structure-6},~\ref{lem:one-thread-good-neighbor}, and~\ref{lem:no-one-thread-bad-neighbor}}.

\begin{lemma}\label{lem:bad-two-color-extension}
Let $b$ be a bad core $d$-vertex in $H$. In $G$, let
$p_1,\ldots,p_{d-2}$ be the $1$-neighbors of $b$ corresponding to
its ring neighbors, and let $u$ and $h$ be its other two neighbors.
Let $S\subseteq V(G)$ such that $h\notin S$ and $\{u,b,p_1,\ldots,p_{d-2}\}\subseteq S$  and let $\varphi$ be an
admissible $L$-coloring of $G-S$.

Then there exist distinct colors $\gamma_1,\gamma_2\in L(b)$ and a
color $\lambda$ such that, for each $i\in[2]$,
\[
\gamma_i\notin 
\begin{cases}
\{\varphi(h),\varphi^*(h)\}, & \text{if $h$ is non-isolated in $G-S$}, \\
\{\varphi(h)\}, & \text{if $h$ is isolated in $G-S$}.
\end{cases}
\]
Moreover, for each $i\in[2]$, there exist colors
\[
a^{(i)}_j\in L(p_j)\setminus\{\gamma_i\}
\qquad (j\in[d-2])
\]
such that
\[
\nu_{(a^{(i)}_1,\ldots,a^{(i)}_{d-2},\varphi(h))}(\lambda)=1.
\]
\end{lemma}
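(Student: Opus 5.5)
The plan is a direct counting argument on the set of admissible colors at $b$, with $\lambda$ taken to be $\beta=\varphi(h)$ except in one extremal situation. Since $h\notin S$, the color $\beta=\varphi(h)$ is defined.

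\textbf{Setup.} Let $F=\{\varphi(h),\varphi^*(h)\}$ if $h$ is non-isolated in $G-S$, and $F=\{\varphi(h)\}$ otherwise. Put $\Gamma=L(b)\setminus F$. The vertex $b$ has a ring neighbor, so \textbf{Lemma~\ref{lem:ring-structure}}\textbf{(ii)} gives $d\ge4$. Hence $|L(b)|=d+1$ and
\[
|\Gamma|\ge d-1\ge3 .
\]
Each $p_j$ is a $1$-vertex of $G$, so $|L(p_j)|=2$. The colors $\gamma_1,\gamma_2$ will be chosen inside $\Gamma$, which already gives the first requirement.

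\textbf{First attempt: $\lambda=\beta$.} For a color $\gamma\in\Gamma$, we need $a_j\in L(p_j)\setminus\{\gamma,\beta\}$ for every $j$. Then $\beta$ occurs exactly once in $(a_1,\ldots,a_{d-2},\varphi(h))$, namely as the last entry. This choice fails only if some $p_j$ has $L(p_j)=\{\beta,\gamma\}$; call such a $\gamma$ \emph{blocked}.

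Each $p_j$ blocks at most one color, so at most $d-2$ colors of $\Gamma$ are blocked. If at least two colors of $\Gamma$ are unblocked, take them as $\gamma_1,\gamma_2$ and set $\lambda=\beta$. This covers in particular the case where no list $L(p_j)$ contains $\beta$.

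\textbf{Extremal case.} Since $|\Gamma|\ge d-1$ and at most $d-2$ colors are blocked, at least one color is unblocked. Suppose exactly one color $g\in\Gamma$ is unblocked. Then equality must hold throughout: $|\Gamma|=d-1$, and $L(p_j)=\{\beta,c_j\}$ for all $j$, with $c_1,\ldots,c_{d-2}$ pairwise distinct and
\[
\Gamma=\{g,c_1,\ldots,c_{d-2}\}.
\]
Here we switch to $\lambda=c_1$, $\gamma_1=g$, and $\gamma_2=c_2$; this uses $d-2\ge2$. For either $\gamma\in\{\gamma_1,\gamma_2\}$, set $a_1=c_1$ and $a_j=\beta$ for $j\ge2$.

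We check that this works. Since $\gamma\in\Gamma$ and $\beta\notin\Gamma$, we have $\beta\neq\gamma$. Also $c_1\neq\gamma$, because $c_1\notin\{g,c_2\}$. So each $a_j$ lies in $L(p_j)\setminus\{\gamma\}$. In the sequence, every entry other than $a_1$ equals $\beta$, and $c_1\neq\beta$. Therefore $\lambda=c_1$ occurs exactly once.

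\textbf{Main obstacle.} The only delicate point is recognizing that the pigeonhole count can leave a single unblocked color. In that extremal case the structure is forced to be rigid, which is exactly what makes it possible to switch $\lambda$ to one of the $c_j$ and use the common color $\beta$ on all the other leaves.
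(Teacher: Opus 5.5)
Your proposal is correct and takes essentially the same route as the paper. Take $\lambda=\varphi(h)$ and give each leaf its other color whenever possible; in the extremal configuration where every $L(p_j)=\{\varphi(h),\eta_j\}$ with the $\eta_j$ pairwise distinct, switch $\lambda$ to one $\eta_{j_0}$, put $\varphi(h)$ on all the other leaves, and take $\gamma_1\notin T$ and $\gamma_2\neq\lambda$. The only difference is the case split: the paper divides on $|T|\le d-3$ versus $|T|=d-2$ (with $T=\{\eta_j\}$), whereas you divide on the number of unblocked colors in $\Gamma$, so you switch $\lambda$ only in a slightly smaller, genuinely forced subcase.
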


\begin{figure}[htbp]
\centering
\begin{tikzpicture}[scale=0.9]
\tikzset{
deleted vertex/.style={draw,shape=circle,fill=white,inner sep=1.8pt},
retained vertex/.style={draw,shape=circle,fill=black,inner sep=1.8pt}
}
\node[deleted vertex,label=below:$b$] (b) at (0,0) {};
\node[deleted vertex,label=below:$u$] (u) at (-2.4,0) {};
\node[retained vertex,label=below:$h$] (h) at (2.4,0) {};
\node[deleted vertex,label=left:$p_1$] (p1) at (-0.9,1.55) {};
\node[draw=none] at (0,1.35) {$\cdots$};
\node[deleted vertex,label=right:$p_{d-2}$] (pd) at (0.9,1.55) {};
\draw (u)--(b)--(h);
\draw (b)--(p1);
\draw (b)--(pd);
\end{tikzpicture}
\caption{Configuration in \textbf{Lemma~\ref{lem:bad-two-color-extension}}.
The vertex $h$ is colored, while $b$ and its $1$-neighbors are uncolored.}
\label{fig:bad-two-color-extension}
\end{figure}

\begin{proof}
Since $b$ is bad,
\textbf{Lemma~\ref{lem:ring-structure}}\textbf{(ii)} gives $d\ge4$.
Put $\delta=\varphi(h)$, and let
\[
C=
\begin{cases}
L(b)\setminus\{\delta,\varphi^*(h)\},
&\text{if $h$ is non-isolated in $G-S$},\\[4pt]
L(b)\setminus\{\delta\},
&\text{otherwise}.
\end{cases}
\]
Since $|L(b)|=d+1$, we have
\begin{equation}\label{eq:bad-two-color-C-size}
|C|\ge d-1.
\end{equation}

Let
\(I=\{j\in[d-2]:\delta\in L(p_j)\}\).
For each $j\in I$, let $\eta_j$ be the unique color in
$L(p_j)\setminus\{\delta\}$, and put
\(T=\{\eta_j:j\in I\}\).
Since every $p_j$ has a $2$-list,
\begin{equation}\label{eq:bad-two-color-T-size}
|T|\le |I|\le d-2.
\end{equation}

Suppose first that $|T|\le d-3$. By \eqref{eq:bad-two-color-C-size}, choose distinct
\(\gamma_1,\gamma_2\in C\setminus T\)
and set $\lambda=\delta$. For $i\in[2]$, define
$a_j^{(i)}=\eta_j$ when $j\in I$, and choose
\(a_j^{(i)}\in L(p_j)\setminus\{\gamma_i\}\)
when $j\notin I$. These choices are possible, and no
$a_j^{(i)}$ equals $\delta$. Hence $\delta$ occurs exactly once in
\((a^{(i)}_1,\ldots,a^{(i)}_{d-2},\delta)\).

It remains to consider $|T|=d-2$. Equality in \eqref{eq:bad-two-color-T-size} implies that
$I=[d-2]$ and that the colors $\eta_1,\ldots,\eta_{d-2}$ are
pairwise distinct. Choose
\(\gamma_1\in C\setminus T\),
choose $\lambda\in T$, and let $j_0$ be the unique index such that
$\eta_{j_0}=\lambda$. Since $|C|\ge d-1\ge3$, we may further choose
\(\gamma_2\in C\setminus\{\gamma_1,\lambda\}\).
For either $i\in[2]$, set
\[
a_{j_0}^{(i)}=\lambda,
\qquad
a_j^{(i)}=\delta\quad(j\neq j_0).
\]
Because $\gamma_1\notin T$ and $\gamma_2\neq\lambda$, all these
colors belong to the required sets
$L(p_j)\setminus\{\gamma_i\}$. Moreover, $\lambda$ occurs exactly
once in \((a^{(i)}_1,\ldots,a^{(i)}_{d-2},\delta)\).
\end{proof}


\begin{lemma}\label{lem:structure-6}
Let $u$ be a bad core vertex, and let $v$ be a core $3^+$-vertex
adjacent to $u$. Then $v$ is good.
\end{lemma}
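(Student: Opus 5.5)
We argue by contradiction. Suppose $u$ is a bad core $k$-vertex and $v$ is an adjacent core $3^+$-vertex that is also bad, of degree $d$. By \textbf{Lemma~\ref{lem:ring-structure}}\textbf{(ii)}, a bad vertex has a ring neighbor, so $k,d\ge4$.

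In $G$, write $y_1,\ldots,y_{k-2}$ for the $1$-neighbors of $u$, and let $N_G(u)=\{y_1,\ldots,y_{k-2},v,h_u\}$. Likewise write $p_1,\ldots,p_{d-2}$ for the $1$-neighbors of $v$, with $N_G(v)=\{p_1,\ldots,p_{d-2},u,h_v\}$. Here $h_u=h_v$ is allowed. Put
\[
S=\{u,v,y_1,\ldots,y_{k-2},p_1,\ldots,p_{d-2}\},
\]
and take an admissible $L$-coloring $\varphi$ of $G-S$. The goal is to extend $\varphi$ to a \textnormal{PCF} $L$-coloring of $G$. Because $h_u,h_v\notin S$, their unique colors can be protected by having $u$ and $v$ avoid $\varphi(h_\bullet)$ and $\varphi^*(h_\bullet)$. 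The leaves need nothing beyond properness.

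The key tool is \textbf{Lemma~\ref{lem:bad-two-color-extension}}, applied twice.
\begin{itemize}
\item Applied to $b=v$ (with its ``$u$'' equal to our $u$ and $h=h_v$), it gives distinct colors $\gamma_1,\gamma_2\in L(v)$ avoiding the colors of $h_v$, and a color $\lambda$. For each $\gamma_i$ there is a coloring of $p_1,\ldots,p_{d-2}$ in which $\lambda$ occurs exactly once among the $p_j$ together with $h_v$.
\item Applied to $b=u$ (with $h=h_u$ and the partner $v$), it gives $\gamma'_1,\gamma'_2\in L(u)$ and a color $\lambda'$ with the same property for the $y_j$ and $h_u$.
\end{itemize}
We then color $u$ with some $\gamma'_j$ and $v$ with some $\gamma_i$. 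The extension is \textnormal{PCF} provided three conditions hold:
\[
\gamma'_j\ne\gamma_i\ \ (\text{properness}),\qquad
\gamma'_j\ne\lambda\ \ (\text{so $\lambda$ stays unique at }v),\qquad
\gamma_i\ne\lambda'\ \ (\text{so $\lambda'$ stays unique at }u).
\]
The leaves are colored by the representatives $a^{(i)}_j$ from the lemma, which already avoid the color of their center.

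The main obstacle is selecting a compatible pair $(i,j)$, since each vertex offers only two options. First pick $\gamma'_j\neq\lambda$, which is possible because $\gamma'_1\ne\gamma'_2$. Then we need $\gamma_i\notin\{\lambda',\gamma'_j\}$. This fails only if $\{\gamma_1,\gamma_2\}=\{\lambda',\gamma'_j\}$, and it may still fail after switching to the other $\gamma'_{j'}$ when that choice also avoids $\lambda$.

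In that degenerate situation I would open up the proof of \textbf{Lemma~\ref{lem:bad-two-color-extension}} rather than use it as a black box. In the case $|T|\le d-3$ there, $\lambda=\varphi(h)$ and $\gamma_1,\gamma_2$ can be any two colors of $C\setminus T$. This set has size at least $2$, and usually at least $3$, since $|C|\ge d-1$. In the case $|T|=d-2$, $\gamma_2$ can be any color of $C\setminus\{\gamma_1,\lambda\}$, which has size at least $d-3\ge1$, and $\lambda$ can be any element of $T$. So one of the two vertices really has a third admissible color, or a free choice of $\lambda$, which breaks the tie.

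The residual tight case is $d=4$ with $|C|=3$ on both sides. There I would exploit the tightness: the lists $L(p_j)$ and $L(y_j)$ are then determined by $C$, $T$ and $\delta$. With the lists forced, I would write down an explicit coloring, for instance by giving $u$ and $v$ colors from $T$ and making $\varphi(h_u)$ or $\varphi(h_v)$ the unique color. This mirrors the endgames of \textbf{Lemmas~\ref{lem:structure-4}} and~\ref{lem:structure-5}.

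Finally, the case $h_u=h_v$ needs a separate check. There the colors of $u$ and $v$ must both avoid $\varphi(h)$ and $\varphi^*(h)$. Also, $h$ is non-isolated in $G-S$ unless $d_G(h)=2$, and \textbf{Lemma~\ref{lem:structure-1}}\textbf{(iii)} forbids a $2$-vertex $h$ adjacent to two core vertices in this way. The lemma's hypotheses cover both the isolated and the non-isolated alternatives, so the same argument goes through.
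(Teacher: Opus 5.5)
Your reduction to the three conditions $\gamma'_j\neq\gamma_i$, $\gamma'_j\neq\lambda$, $\gamma_i\neq\lambda'$ is sound. You also correctly notice that using \textbf{Lemma~\ref{lem:bad-two-color-extension}} as a black box on both vertices can fail, precisely when $\{\gamma'_1,\gamma'_2\}=\{\lambda,c\}$ and $\{\gamma_1,\gamma_2\}=\{\lambda',c\}$.

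The gap is in the tie-breaking step. You claim that opening up that lemma always yields a third admissible color or a free witness, except when $d=4$ and $|C|=3$ on both sides. This is false. Whenever $|C_v|=d-1$, $|T_v|=d-3$ and $T_v\subseteq C_v$, which can happen for every $d\ge4$, the construction forces $\lambda=\varphi(h_v)$ and offers only the two colors of $C_v\setminus T_v$. The same holds at $u$.

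For instance, take $k=d=5$ with
\begin{itemize}
\item $L(p_1)=\{\varphi(h_v),t_1\}$, $L(p_2)=\{\varphi(h_v),t_2\}$, $\varphi(h_v)\notin L(p_3)$, and $L(v)=\{\varphi(h_v),\varphi^*(h_v),\varphi(h_u),c,t_1,t_2\}$;
\item symmetrically, $L(y_i)=\{\varphi(h_u),s_i\}$ for $i\in[2]$, $\varphi(h_u)\notin L(y_3)$, and $L(u)=\{\varphi(h_u),\varphi^*(h_u),\varphi(h_v),c,s_1,s_2\}$.
\end{itemize}
Then $u$ may only take $\varphi(h_v)$ or $c$, with witness $\varphi(h_u)$. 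Likewise $v$ may only take $\varphi(h_u)$ or $c$, with witness $\varphi(h_v)$. Each of the four pairs violates one of your conditions.

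So the residual case is not confined to $d=4$. Even your $d=4$ endgame is only a gesture. If $v$ receives some $\eta_j\in T_v$, then $p_j$ is forced to take $\varphi(h_v)$, so $\varphi(h_v)$ cannot be the unique color at $v$.

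The missing idea is not to pin both bad vertices to two options with a prescribed witness. In the paper's notation, $u_1,u_2$ are the bad vertices, $y$ is the other neighbor of $u_2$, $\eta=\varphi(y)$, and $z_i$ are the leaves of $u_2$.
\begin{itemize}
\item The paper applies \textbf{Lemma~\ref{lem:bad-two-color-extension}} only to $u_1$, obtaining $\{\beta,\gamma\}$ and the witness $\alpha$.
\item It then handles the whole neighborhood of $u_2$ with \textbf{Lemma~\ref{lem:pcf-representative-system}}. For each $\theta\in F=L(u_2)\setminus\{\alpha,\eta,\varphi^*(y)\}$ it uses the sets $\{\beta,\gamma\}\setminus\{\theta\}$ and $L(z_i)\setminus\{\theta\}$ with $B=(\eta)$. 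This lets the unique color at $u_2$ be any color, including that of $u_1$ or of a leaf.
\item If no $\theta$ works, each $\theta$ yields at least three singletons. Double counting gives $3(m-2)\le 3|F|\le 2(m-1)$, hence $m=4$.
\item When $m=4$, equality forces $F=\{\beta,\gamma\}=L(z_1)=L(z_2)$, and an explicit coloring with $\eta$ unique at $u_2$ finishes.
\end{itemize}

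A minor point: a common $2$-vertex $h_u=h_v$ is excluded by \textbf{Lemma~\ref{lem:structure-5}}, not by \textbf{Lemma~\ref{lem:structure-1}}\textbf{(iii)}. It is harmless anyway, since $u$ and $v$ receive distinct colors.
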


\begin{figure}[htbp]
\centering
\begin{tikzpicture}[scale=0.82]
\tikzset{
deleted vertex/.style={draw,shape=circle,fill=white,inner sep=1.7pt},
retained vertex/.style={draw,shape=circle,fill=black,inner sep=1.7pt}
}
\node[deleted vertex,label=below:$u_1$] (u1) at (-0.9,0) {};
\node[deleted vertex,label=below:$u_2$] (u2) at (0.9,0) {};
\node[retained vertex,label=below:$x$] (x) at (-2.8,0) {};
\node[retained vertex,label=below:$y$] (y) at (2.8,0) {};
\node[deleted vertex,label=left:$w_1$] (w1) at (-1.8,1.45) {};
\node[draw=none] at (-2.2,0.75) {$\vdots$};
\node[deleted vertex,label=left:$w_{k-2}$] (wk) at (-1.8,-1.45) {};
\node[deleted vertex,label=right:$z_1$] (z1) at (1.8,1.45) {};
\node[draw=none] at (2.2,0.75) {$\vdots$};
\node[deleted vertex,label=right:$z_{m-2}$] (zl) at (1.8,-1.45) {};
\draw (x)--(u1)--(u2)--(y);
\draw (u1)--(w1);
\draw (u1)--(wk);
\draw (u2)--(z1);
\draw (u2)--(zl);
\end{tikzpicture}
\caption{Configuration excluded in \textbf{Lemma~\ref{lem:structure-6}}. The two
hollow central vertices are bad core vertices.}
\label{fig:structure-6}
\end{figure}

\begin{proof}
Suppose, to the contrary, that two bad core vertices $u_1$ and
$u_2$, of degrees $k$ and $m$, respectively, are adjacent. Each has
a ring neighbor, so \textbf{Lemma~\ref{lem:ring-structure}}\textbf{(ii)} gives
$k,m\ge4$. In $G$, let $w_1,\ldots,w_{k-2}$ be the $1$-neighbors
of $u_1$ corresponding to its ring neighbors, and let $x$ be its
remaining neighbor besides $u_2$. Similarly, let
$z_1,\ldots,z_{m-2}$ be the $1$-neighbors of $u_2$ corresponding to
its ring neighbors, and let $y$ be its remaining neighbor besides
$u_1$. Set
\[
S=\{u_1,u_2,w_1,\ldots,w_{k-2},z_1,\ldots,z_{m-2}\},
\]
and take an admissible $L$-coloring $\varphi$ of $G-S$.

Put $\delta=\varphi(x)$. If $x$ is non-isolated in $G-S$, define
$C=L(u_1)\setminus\{\delta,\varphi^*(x)\}$; otherwise define
$C=L(u_1)\setminus\{\delta\}$. In either case, $|C|\ge k-1$.

Apply \textbf{Lemma~\ref{lem:bad-two-color-extension}} to the bad
vertex $u_1$, its $1$-neighbors $w_1,\ldots,w_{k-2}$, and the
already colored neighbor $x$. We obtain distinct colors
$\beta,\gamma\in C$ and a color $\alpha$ such that, for either
$c\in\{\beta,\gamma\}$, the vertices $w_1,\ldots,w_{k-2}$ can be
colored properly against $u_1$ colored with $c$, and $\alpha$
appears exactly once in
\[
\bigl(\varphi(w_1),\ldots,\varphi(w_{k-2}),\delta\bigr).
\]

Put $\eta=\varphi(y)$. If $y$ is non-isolated in $G-S$, define
$F=L(u_2)\setminus\{\alpha,\eta,\varphi^*(y)\}$; otherwise define
$F=L(u_2)\setminus\{\alpha,\eta\}$. Since $|L(u_2)|=m+1$,
$|F|\ge m-2$. For each $\theta\in F$, define
$A_\theta(u_1)=\{\beta,\gamma\}\setminus\{\theta\}$ and
$A_\theta(z_i)=L(z_i)\setminus\{\theta\}$ for $i\in[m-2]$. Let
$B=(\eta)$.

Suppose that, for some $\theta\in F$, the sequence
\[
\mathcal A_\theta=
\bigl(A_\theta(u_1),A_\theta(z_1),\ldots,A_\theta(z_{m-2})\bigr)
\]
has a \textnormal{PCF}-representative system with respect to $B$. Use the chosen
representative on $u_1$, the remaining representatives on the
$z_i$, and then apply the corresponding coloring described in 
\textbf{Lemma~\ref{lem:bad-two-color-extension}} in the neighborhood of $u_1$. 
Finally set $\varphi(u_2)=\theta$. Note that the color $\alpha$ remains unique in the neighborhood of $u_1$.
Hence the extension of $\varphi$ is a \textnormal{PCF} $L$-coloring of $G$, a
contradiction. Hence any $\theta\in F$ admits no such system.

  Let $q_\theta$
be the number of singleton sets in $\mathcal A_\theta$ and let
$D_\theta$ be the maximum size of a set in $\mathcal A_\theta$.
There are $m-1$ sets, each of size
at most $2$. 
Applying
\textbf{Lemma~\ref{lem:pcf-representative-system}} to $\mathcal A_\theta \circ B$  yields that  $D_\theta\le r+s+\left\lfloor\frac{q_\theta-s}{2}\right\rfloor$, where $s=1$ and $r=0$. 
This implies that $q_\theta\ge3$:
If $D_\theta=1$, then immediately $q_\theta =m-1\ge3$; if $D_\theta=2$, failure of the criterion gives $q_\theta \ge3$.

Double-count the pairs $(\theta,A)$ with $\theta\in F$,
$A\in\mathcal A_\theta$, and $|A|=1$. There are at least $3|F|$
such pairs. The set $A_\theta(u_1)$ is a singleton for at most the
two colors in $\{\beta,\gamma\}$, and each $A_\theta(z_i)$ is a singleton
for at most two colors in $L(z_i)$.
Thus
\[
3|F|\le2+2(m-2)=2(m-1).
\]
Since $|F|\ge m-2$, we obtain $3(m-2)\le2(m-1)$, impossible when
$m\ge5$.

It remains to consider $m=4$. Equality must hold throughout, so
$F=\{\beta,\gamma\}$ and $L(z_1)=L(z_2)=F$. In particular,
$\beta\in F\subseteq L(u_2)\setminus\{\alpha,\eta\}$, and hence
$\beta\neq\alpha$. Choose
$\varphi(u_2)=\beta$, $\varphi(u_1)=\gamma$, and
$\varphi(z_1)=\varphi(z_2)=\gamma$, and use the coloring of
$w_1,\ldots,w_{k-2}$ supplied by \textbf{Lemma~\ref{lem:bad-two-color-extension}} corresponding to $\gamma$. Since
$\eta\notin F$, the color $\eta$ appears exactly once in $N_G(u_2)$,
while $\alpha$ remains unique in $N_G(u_1)$. 
If both $x$ and $y$ are non-isolated in $G-S$, their fixed unique colors remain.  If one of $x,y$ is isolated in $G-S$, then $x=y$ and this vertex
is a $2$-vertex with neighbors $u_1,u_2$. Since
$\varphi(u_1)=\gamma\neq\beta=\varphi(u_2)$, the two colors in its
neighborhood are distinct. Hence the resulting coloring is a \textnormal{PCF} $L$-coloring of
$G$, a contradiction.

Thus two bad core vertices cannot be adjacent. 
\end{proof}



\begin{lemma}\label{lem:thread-reduction}
Let $u$ be a good core $k$-vertex satisfying
\(n_{\mathrm{ring}}(u)=k-3\),
and suppose that $u$ is incident with a $2^+$-thread $Q$. If $u$
is incident with a $1$-thread, then $k=4$ and $Q$ is a $2$-thread.
\end{lemma}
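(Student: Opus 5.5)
The plan is to reuse the \textnormal{PCF}-representative-system counting scheme of \textbf{Lemmas~\ref{lem:structure-4}} and~\ref{lem:structure-5}. The new ingredient is that the neighborhood of $u$ now contains ring neighbors, one $2^+$-thread and one $1$-thread simultaneously.

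\textbf{Setting up.} First, $k\ge4$: for $k=3$, \textbf{Lemma~\ref{lem:structure-1}}\textbf{(i)} forbids the $2^+$-thread $Q$. By \textbf{Corollary~\ref{cor:structure-4}}\textbf{(ii)}, $Q$ is the unique $2^+$-thread at $u$, and by \textbf{Lemma~\ref{lem:structure-1}}\textbf{(ii)} it is a $2$- or $3$-thread. In $G$ we write:
\begin{itemize}
\item $N_G(u)=\{y_1,\ldots,y_{k-3},t,v_1,x\}$, where the $y_i$ are the $1$-neighbors and $x$ is the remaining neighbor;
\item $utv$ for the $1$-thread, with $v$ its other anchor;
\item $v_1v_2\cdots$ for the vertices of $Q$, with $z$ the neighbor of the last vertex of $Q$ outside $Q$.
\end{itemize}
Suppose, to the contrary, that $k\ge5$ or that $Q$ is a $3$-thread.

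We set $S=\{u,t,y_1,\ldots,y_{k-3}\}\cup V(Q)$ and take an admissible coloring $\varphi$ of $G-S$. Non-isolation of $x,v,z$ in $G-S$ is checked exactly as in \textbf{Lemma~\ref{lem:structure-4}}: use the bound $\operatorname{mad}(G)<12/5$ together with \textbf{Lemma~\ref{lem:structure-1}}\textbf{(i)}, and note that a coincidence such as $x=v$ is harmless.

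\textbf{Candidate colors and available sets.} Put $\beta=\varphi(x)$ and
\[
C=L(u)\setminus\{\beta,\varphi^*(x),\varphi(v)\}.
\]
The color $\varphi(v)$ is excluded so that $t$ sees two distinct colors. This gives $|C|\ge k-2$. For each $\alpha\in C$, the available sets are:
\begin{itemize}
\item $A_\alpha(y_i)=L(y_i)\setminus\{\alpha\}$;
\item $A_\alpha(t)=L(t)\setminus\{\alpha,\varphi(v),\varphi^*(v)\}$;
\item $A_\alpha(v_1)$, obtained after precoloring the rest of $Q$ greedily from the $z$-end, avoiding $\varphi(z),\varphi^*(z)$ and $\alpha$ as in \textbf{Lemma~\ref{lem:structure-4}}.
\end{itemize}
As before, with $B=(\beta)$, a \textnormal{PCF}-representative system yields a \textnormal{PCF} $L$-coloring. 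So for every $\alpha$ none exists, and \textbf{Lemma~\ref{lem:pcf-representative-system}} with $s=1$, $r=0$ gives $q_\alpha\ge3$.

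\textbf{Double counting.} We count pairs $(\alpha,z')$ with $z'\in\{y_1,\ldots,y_{k-3},t,v_1\}$ and $|A_\alpha(z')|=1$:
\begin{itemize}
\item each $y_i$ is a singleton for at most $2$ colors;
\item $t$ is a singleton for at most $2$ colors, as in \textbf{Lemma~\ref{lem:structure-5}};
\item $v_1$ is a singleton for at most $3$ colors when $Q$ is a $2$-thread, as in \textbf{Lemma~\ref{lem:structure-4}}.
\end{itemize}
This yields $3(k-2)\le 2(k-3)+5$, so $k\le5$.

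\textbf{Case $Q$ a $3$-thread.} Here the extra internal vertex is the point: after fixing $\alpha$, we choose $\varphi(v_3)$ and then $\varphi(v_2)$ so that $A_\alpha(v_1)=L(v_1)\setminus\{\alpha,\varphi(v_2)\}$ keeps at least two colors. Since $v_1$'s own unique color is automatic from $\alpha\ne\varphi(v_2)$, we only need $\varphi(v_2)\notin\{\alpha,\varphi(v_3)\}$ with $v_2$ seeing distinct colors. The main obstacle is checking that enough freedom remains for this, because $v_3$ must also avoid $\varphi(z),\varphi^*(z)$ and ensure that $v_2$ has a unique color. If it works, $v_1$ contributes no singletons, and the count becomes $3(k-2)\le 2(k-3)+2$, a contradiction for every $k\ge4$.

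\textbf{Case $Q$ a $2$-thread and $k=5$.} Equality must hold throughout the count. Then $|C|=3$, and every leaf list, $L(t)\setminus\{\varphi(v),\varphi^*(v)\}$, and $L(v_1)\setminus\{\varphi(z)\}$ lies inside $C$. This is the same situation as the containment \eqref{eq:structure-4-available-set-containment}. We then color $u$ with any $\alpha\in C$ and every neighbor arbitrarily from its set. Since all neighbors of $u$ receive colors in $C$ while $\beta\notin C$, the color $\beta=\varphi(x)$ is unique at $u$, giving the final contradiction.
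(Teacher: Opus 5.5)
\textbf{The $3$-thread case has a genuine gap.} Your $2$-thread analysis follows the paper: the count gives $k\le5$, and the containment argument excludes $k=5$. The $3$-thread case, however, is exactly where the paper needs an extra idea, and your argument there does not hold.

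Your set $A_\alpha(v_1)=L(v_1)\setminus\{\alpha,\varphi(v_2)\}$ omits the constraint $\varphi(v_1)\neq\varphi(v_3)$. That constraint is forced, because $v_2$ has only the neighbors $v_1,v_3$ and must have a unique color. The correct set is $L(v_1)\setminus\{\alpha,\varphi(v_2),\varphi(v_3)\}$, and it can be a singleton however $v_2,v_3$ are chosen. For instance, take $L(v_1)=L(v_2)=L(v_3)=\{1,2,3,4\}$, $\varphi(z)=1$, $\varphi^*(z)=2$, and $\alpha=2\in C$ (nothing prevents this). Then:
\begin{itemize}
\item $\varphi(v_3)\in\{3,4\}$;
\item $\varphi(v_2)$ must avoid $\varphi(v_3)$, $1$ (so that $v_3$ has a unique color) and $2$ (so that $v_1$ has one), so it is the other element of $\{3,4\}$;
\item $v_1$ is therefore forced to take $1$.
\end{itemize}
So $v_1$ does contribute singletons, and the inequality $3(k-2)\le2(k-3)+2$ is unjustified. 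You flagged this step as unchecked ("if it works"), and it does not work.

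\textbf{What the fallback leaves open.} Suppose you precolor $v_3$ with some $\zeta\in L(v_3)\setminus\{\varphi(z),\varphi^*(z)\}$, as the paper does, and use the $2$-thread bound of three singletons at $v_1$. You again get only $k\le5$. The case $k=5$ dies by containment, but $k=4$ with a $3$-thread is tight for the count: $|C|=2$, $L(y_1)=C$, $L(t)=C\cup\{\varphi(v),\varphi^*(v)\}$, and $A_{c_1}(v_1)=A_{c_2}(v_1)=\{\beta\}$. Counting alone gives no contradiction here.

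\textbf{How the paper breaks the tight case.} It first extracts more structure: $L(v_1)=C\cup\{\zeta,\beta\}$, and the color used on $v_2$ when $u$ gets $c_i$ is $c_{3-i}$, so $c_1,c_2\notin\{\zeta,\varphi(z)\}$. It then recolors, with $i=3-j$:
\begin{itemize}
\item $u$ gets $c_i$;
\item $y_1$, $t$ and $v_2$ get $c_j$;
\item $v_1$ gets $\zeta$;
\item $v_3$ gets some $d\in L(v_3)\setminus\{c_j,\zeta,\varphi(z),\varphi^*(z)\}$, which exists for some $j\in\{1,2\}$ because $|L(v_3)|=4$.
\end{itemize}
Your proposal is missing some such step, one that uses the freedom at $v_3$ only after the colors around $u$ are fixed.
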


\begin{figure}[htbp]
\centering
\begin{tikzpicture}[scale=0.76]
\tikzset{
deleted vertex/.style={draw,shape=circle,fill=white,inner sep=1.7pt},
retained vertex/.style={draw,shape=circle,fill=black,inner sep=1.7pt}
}
\node[deleted vertex,label=above:$u$] (u) at (0,0) {};
\node[deleted vertex,label=above:$v$] (v) at (1.3,1.15) {};
\node[deleted vertex,label=above:$w$] (w) at (2.6,1.15) {};
\node[deleted vertex,label=above:$z$] (z) at (3.9,1.15) {};
\node[retained vertex,label=above:$r$] (r) at (5.2,1.15) {};
\node[deleted vertex,label=below:$a$] (a) at (1.35,-1.0) {};
\node[retained vertex,label=below:$p$] (p) at (2.7,-1.0) {};
\node[retained vertex,label=below:$x$] (x) at (0,-2.0) {};
\node[deleted vertex,label=left:$y_1$] (y1) at (-1.55,1.15) {};
\node[draw=none] at (-1.85,0) {$\vdots$};
\node[deleted vertex,label=left:$y_{k-3}$] (yk) at (-1.55,-1.15) {};
\draw (u)--(v)--(w)--(z)--(r);
\draw (u)--(a)--(p);
\draw (u)--(x);
\draw (u)--(y1);
\draw (u)--(yk);
\end{tikzpicture}
\caption{Configuration in \textbf{Lemma~\ref{lem:thread-reduction}} with
$Q=uvwzr$ a $2^+$-thread and $d_G(v)=d_G(w)=2$ ($d_G(z)=2$ if $Q$ is a
$3$-thread).
}
\label{fig:thread-reduction}
\end{figure}

\begin{proof}
By \textbf{Lemma~\ref{lem:structure-1}}\textbf{(i)}, $k\ge4$.
By \textbf{Corollary~\ref{cor:structure-4}}\textbf{(ii)}, $Q$ is
the unique $2^+$-thread incident with $u$, and
\textbf{Lemma~\ref{lem:structure-1}}\textbf{(ii)} implies that $Q$
is a $2$-thread or a $3$-thread. Let
$y_1,\ldots,y_{k-3}$ be the $1$-neighbors of $u$ in $G$
corresponding to its ring neighbors. Suppose that $uap$ is a
$1$-thread, where $a$ is its $2$-vertex and $p$ is its other
anchor, and let $x$ be the remaining neighbor of $u$. Since the
$k-3$ $1$-neighbors of $u$ have already been listed, $x$ is not a
$1$-vertex. Thus $d_G(p)\ge3$ and $d_G(x)\ge2$.

If $Q$ is a $2$-thread, write $Q=uvwz$, and put \(S=\{u,v,w,a,y_1,\ldots,y_{k-3}\}\), where $z$ is the other anchor with $d_G(z)\ge 4$ by \textbf{Lemma~\ref{lem:structure-1}}\textbf{(i)}.
If $Q$ is a $3$-thread, write $Q=uvwzr$,
and put
\( S=\{u,v,w,z,a,y_1,\ldots,y_{k-3}\} \), where $r$ is the other anchor with $d_G(r)\ge 4$ by \textbf{Lemma~\ref{lem:structure-1}}\textbf{(i)}.
Take an admissible $L$-coloring $\varphi$ of $G-S$. 

When $Q$ is a $3$-thread, set $\rho=\varphi(r)$ and $\tau=\varphi^*(r)$, choose
$\zeta\in L(z)\setminus\{\rho,\tau\}$, and extend $\varphi$ by
$\varphi(z)=\zeta$. Since $\zeta\neq\tau$,
fix $\varphi^*(z)=\rho$. The resulting coloring is admissible on
$G-(S\setminus\{z\})$. 
After this preliminary step, in either case, we always denote the uncolored set $\{u,v,w,a,y_1,\ldots,y_{k-3}\}$ by $S$, the admissible $L$-coloring of $G-S$ by $\varphi$, where $\varphi^*(p)$, $\varphi^*(z)$ and
$\varphi^*(x)$ are defined.


Put $\beta=\varphi(x)$ and $\eta=\varphi(p)$, and let
$C=L(u)\setminus\{\beta,\varphi^*(x),\eta\}$. Since
$|L(u)|=k+1$, we have
\begin{equation}\label{eq:thread-reduction-C-size}
|C|\ge k-2.
\end{equation}
For $\alpha\in C$, choose
$c_\alpha\in L(w)\setminus\{\alpha,\varphi(z),\varphi^*(z)\}$,
and define
\(A_\alpha(y_i)=L(y_i)\setminus\{\alpha\}\) for \(i\in[k-3]\),
\[
A_\alpha(a)=L(a)\setminus\{\alpha,\varphi(p),\varphi^*(p)\},
\qquad
A_\alpha(v)=L(v)\setminus\{\alpha,c_\alpha,\varphi(z)\}.
\]
All these sets are nonempty. Let
\[
\mathcal A_\alpha=
\bigl(A_\alpha(y_1),\ldots,A_\alpha(y_{k-3}),
A_\alpha(a),A_\alpha(v)\bigr),
\qquad B=(\beta).
\]

We claim that $\mathcal A_\alpha$ has no \textnormal{PCF}-representative system with respect to $B$. Otherwise, using such a system to color the corresponding neighbors of $u$, together with setting
$\varphi(u)=\alpha$ and $\varphi(w)=c_\alpha$, extends $\varphi$ to
a \textnormal{PCF} $L$-coloring of $G$, a contradiction.

For $\alpha\in C$, let $q_\alpha$ be the number of singleton sets
in $\mathcal A_\alpha$, and let $D_\alpha$ be their maximum size.
Put
\[
C_1=\{\alpha\in C:D_\alpha=1\},
\qquad
C_{\ge3}=\{\alpha\in C:D_\alpha\ge3\}.
\]
Applying \textbf{Lemma~\ref{lem:pcf-representative-system}} to $\mathcal A_\alpha \circ B$  yields that  $D_\alpha\le r+s+\left\lfloor\frac{q_\alpha-s}{2}\right\rfloor$, where $s=1$ and $r=0$.
It follows that $q_{\alpha}=k-1\ge 3$ when $D_\alpha=1$, $q_{\alpha}\ge 3$ when $D_\alpha=2$, and $q_{\alpha}\ge 5$ when $D_\alpha\ge 3$. Hence
\begin{equation}\label{eq:thread-reduction-singleton-lower}
\sum_{\alpha\in C}q_\alpha
\ge3|C|+(k-4)|C_1|+2|C_{\ge3}|.
\end{equation}
For each $i\in[k-3]$, the set $A_\alpha(y_i)$ is a singleton for
at most two colors $\alpha\in C$. The set $A_\alpha(a)$ is a
singleton for at most two colors, since singletonhood forces
$\alpha,\varphi(p),\varphi^*(p)$ to be three distinct members of
the $4$-set $L(a)$. Similarly, $A_\alpha(v)$ is a singleton for at
most three colors. Hence
\begin{equation}\label{eq:thread-reduction-singleton-upper}
\sum_{\alpha\in C}q_\alpha\le2(k-3)+2+3=2k-1.
\end{equation}
Combining \eqref{eq:thread-reduction-C-size}--\eqref{eq:thread-reduction-singleton-upper} yields
\begin{equation*}
k-5+(k-4)|C_1|+2|C_{\ge3}|\le0.
\end{equation*}
Thus $k\le5$.

Suppose that $k=5$. Equality holds throughout \eqref{eq:thread-reduction-C-size}--\eqref{eq:thread-reduction-singleton-upper}. Thus
$|C|=3$, $C_1=C_{\ge3}=\emptyset$, each
$A_\alpha(y_i)$ is a singleton for exactly two colors of $C$,
$A_\alpha(a)$ is a singleton for exactly two colors of $C$, and
$A_\alpha(v)$ is a singleton for every $\alpha\in C$. Consequently
$L(y_i)\subseteq C$ for each $i\in[2]$. If
$T_a=\{\alpha\in C:|A_\alpha(a)|=1\}$, then $|T_a|=2$ and
$L(a)=T_a\cup\{\varphi(p),\varphi^*(p)\}$, with four distinct
colors. Hence $A_\alpha(a)\subseteq C$ for every $\alpha\in C$.
Moreover, the singleton equality for $A_\alpha(v)$, for every
$\alpha\in C$, implies
$L(v)=C\cup\{\varphi(z)\}$. Therefore every set in
$\mathcal A_\alpha$ is contained in $C$. Since $\beta\notin C$,
arbitrary representatives are different from $\beta$, which yields a \textnormal{PCF} $L$-coloring of $G$, a contradiction.

Hence $k=4$. Write $y=y_1$ and $C=\{c_1,c_2\}$. The bounds force
$q_\alpha=3$ for each $\alpha\in C$, so all three sets in
$\mathcal A_\alpha$ are singletons. It follows that
\[
L(y)=C,
\qquad
L(a)=C\cup\{\varphi(p),\varphi^*(p)\},
\]
where the four colors in the second set are distinct. Thus, for
$\alpha=c_i$ and $j=3-i$, the representatives on $y$ and $a$ are
both $c_j$.

For $i\in[2]$, write $\omega_i=c_{c_i}$ and
$A_{c_i}(v)=\{d_i\}$. If $d_i\neq\beta$, then
$(c_j,c_j,d_i,\beta)$ has a unique color, contrary to the
nonexistence of a \textnormal{PCF}-representative system. Hence
$A_{c_1}(v)=A_{c_2}(v)=\{\beta\}$. Since $|L(v)|=4$, comparison of
\[
L(v)\setminus\{c_i,\omega_i,\varphi(z)\}=\{\beta\}
\qquad(i\in[2])
\]
gives
\[
\omega_1=c_2,
\qquad
\omega_2=c_1,
\qquad
L(v)=C\cup\{\varphi(z),\beta\}.
\]
In particular, the four colors in the last set are distinct, and
$c_1,c_2\notin\{\varphi(z),\varphi^*(z)\}$.

It remains to exclude the possibility that $Q$ is a $3$-thread.
Recall that in this case $Q=uvwzr$,
$\rho=\varphi(r)$, $\tau=\varphi^*(r)$,
$\varphi(z)=\zeta$, and $\varphi^*(z)=\rho$. There exist
$i\in[2]$, with $j=3-i$, and a color
\[
d\in L(z)\setminus\{c_j,\rho,\tau\}
\quad\text{with}\quad d\neq\zeta.
\]
Indeed, otherwise
$L(z)\subseteq\{\zeta,c_2,\rho,\tau\}$ and
$L(z)\subseteq\{\zeta,c_1,\rho,\tau\}$; the intersection of
these two sets has size at most three, contrary to $|L(z)|=4$.

Set $\varphi(u)=c_i$,
$\varphi(y)=\varphi(a)=\varphi(w)=c_j$, 
$\varphi(v)=\zeta$, and recolor $z$ with $d$. 
It is easy to verify that the resulting coloring is a \textnormal{PCF} $L$-coloring of $G$, a contradiction that completes the proof.
\end{proof}

\begin{lemma}\label{lem:one-thread-good-neighbor}
Let $u$ be a good core $4$-vertex with $n_{\mathrm{ring}}(u)=1$,
and suppose that $u$ is incident with a $2$-thread $Q$. If $u$ is
incident with a $1$-thread, then it is incident with exactly one
$1$-thread and has no bad core neighbor.
\end{lemma}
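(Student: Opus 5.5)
The plan is to fix the local picture at $u$ and then rule out the two forbidden situations by the representative-system counting used in \textbf{Lemmas~\ref{lem:structure-4}} and~\ref{lem:thread-reduction}.

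\textbf{Setup.} In $G$ the vertex $u$ has four neighbors:
\begin{itemize}
\item a $1$-neighbor $y$, corresponding to its ring neighbor;
\item the first vertex $v$ of the $2$-thread $Q=uvwz$, whose other anchor $z$ satisfies $d_G(z)\ge4$ by \textbf{Lemma~\ref{lem:structure-1}}\textbf{(i)};
\item the $2$-vertex $a$ of a $1$-thread $uap$;
\item a remaining neighbor $x$, which is not a $1$-vertex.
\end{itemize}
The only possible further $1$-thread at $u$ runs through $x$. The only possible core $3^+$-neighbor of $u$ is $x$, since $v$ and $a$ are $2$-vertices and $y$ is a ring vertex. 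So it suffices to exclude two cases: (a) $x$ is the $2$-vertex of a second $1$-thread $uxp'$; (b) $x$ is a bad core vertex.

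\textbf{Case (a).} Put $S=\{u,v,w,a,x,y\}$ and take an admissible coloring $\varphi$ of $G-S$. First I verify that $p,p',z$ are non-isolated in $G-S$, using \textbf{Lemma~\ref{lem:leaf-neighbor}}, \textbf{Lemma~\ref{lem:structure-1}}, and, if necessary, the $\operatorname{mad}$ computation from \textbf{Lemma~\ref{lem:structure-4}}. Let $C=L(u)\setminus\{\varphi(p),\varphi(p')\}$, so $|C|\ge3$. For $\alpha\in C$, choose $c_\alpha\in L(w)\setminus\{\alpha,\varphi(z),\varphi^*(z)\}$ and define
\[
A_\alpha(y)=L(y)\setminus\{\alpha\},\quad
A_\alpha(a)=L(a)\setminus\{\alpha,\varphi(p),\varphi^*(p)\},\quad
A_\alpha(x)=L(x)\setminus\{\alpha,\varphi(p'),\varphi^*(p')\},
\]
\[
A_\alpha(v)=L(v)\setminus\{\alpha,c_\alpha,\varphi(z)\},
\]
with $B$ empty. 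A PCF-representative system extends $\varphi$ to a PCF $L$-coloring of $G$, so none exists. By \textbf{Lemma~\ref{lem:pcf-representative-system}} with $r=s=0$, this forces $q_\alpha\ge4$ for every $\alpha\in C$: if $D_\alpha=1$ then $q_\alpha=4$, and otherwise $q_\alpha\ge2D_\alpha\ge4$. Hence there are at least $12$ singleton pairs. On the other hand, $A_\alpha(y)$, $A_\alpha(a)$ and $A_\alpha(x)$ are each singletons for at most two colors, and $A_\alpha(v)$ for at most three, giving at most $9$. This is a contradiction.

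\textbf{Case (b).} Let $x$ be bad of degree $d\ge4$, with $1$-neighbors $p_1,\dots,p_{d-2}$ and other neighbor $h$. Put $S=\{u,v,w,a,y,x,p_1,\dots,p_{d-2}\}$. Apply \textbf{Lemma~\ref{lem:bad-two-color-extension}} to $b=x$ to obtain colors $\gamma_1,\gamma_2$ and $\lambda$. Any choice of $\varphi(x)\in\{\gamma_1,\gamma_2\}$ then keeps $h$ satisfied and leaves $\lambda$ unique at $x$, provided $\varphi(u)\neq\lambda$. Let $C=L(u)\setminus\{\varphi(p),\lambda\}$, so $|C|\ge3$. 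Define the sets for $y,a,v$ as in Case (a), and $A_\alpha(x)=\{\gamma_1,\gamma_2\}\setminus\{\alpha\}$, with $B$ empty. $A_\alpha(x)$ is a singleton for at most two colors, so the same $12>9$ count gives the contradiction, exactly as the two-colour flexibility was used in \textbf{Lemma~\ref{lem:structure-6}}.

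\textbf{Main obstacle.} The delicate point is not the counting but the bookkeeping of coincidences among the external vertices $p,p',z,h$. These may coincide, for instance $p=h$ or $p=z$, and several deleted vertices may then attach to the same vertex. I must check in each case that every such vertex is non-isolated in $G-S$, so that $\varphi^*$ is defined. I must also check that its unique color survives the several newly colored neighbors, since each of them avoids the same $\varphi^*$-color, as noted in \textbf{Lemma~\ref{lem:structure-4}}. Where this fails because a vertex is isolated, it has few neighbors, and I expect to handle it as in \textbf{Lemma~\ref{lem:structure-4}}: use $\operatorname{mad}(G)<12/5$ and \textbf{Lemma~\ref{lem:structure-1}}\textbf{(i)} to show such a configuration cannot occur, or else check directly that its few neighbors receive distinct colors.
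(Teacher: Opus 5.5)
Your proposal is correct and follows the paper's proof essentially step for step. The two cases are the same: a second $1$-thread through the fourth neighbor, or that neighbor being bad. The deleted sets, the color set $C$ with $|C|\ge 3$, the four available sets, and the $4|C|\ge 12>9$ singleton count all agree, and in the second case Lemma~\ref{lem:bad-two-color-extension} supplies $\{\gamma_1,\gamma_2\}$ and $\lambda$. The coincidence bookkeeping you flag needs no $\operatorname{mad}$ argument: $d_G(z)\ge4$, the other external anchors are $3^+$-vertices not adjacent to $u$, and every newly colored neighbor of an external vertex avoids that vertex's $\varphi$- and $\varphi^*$-colors.
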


\begin{figure}[htbp]
\centering
\begin{subfigure}[t]{0.48\textwidth}
\centering
\begin{tikzpicture}[scale=0.72]
\tikzset{
deleted vertex/.style={draw,shape=circle,fill=white,inner sep=1.7pt},
retained vertex/.style={draw,shape=circle,fill=black,inner sep=1.7pt}
}
\node[deleted vertex,label=above:$u$] (u) at (0,0) {};
\node[deleted vertex,label=above:$v$] (v) at (1.25,1.1) {};
\node[deleted vertex,label=above:$w$] (w) at (2.5,1.1) {};
\node[retained vertex,label=above:$z$] (z) at (3.75,1.1) {};
\node[deleted vertex,label=left:$y$] (y) at (-1.45,0.8) {};
\node[deleted vertex,label=below:$a$] (a) at (1.15,-1.1) {};
\node[retained vertex,label=below:$p$] (p) at (2.45,-1.1) {};
\node[deleted vertex,label=below:$t$] (t) at (-0.4,-1.75) {};
\node[retained vertex,label=below:$q$] (q) at (0.9,-2.45) {};
\draw (u)--(v)--(w)--(z);
\draw (u)--(y);
\draw (u)--(a)--(p);
\draw (u)--(t)--(q);
\end{tikzpicture}
\caption{Two $1$-threads $uap$ and $utq$.}
\end{subfigure}
\hfill
\begin{subfigure}[t]{0.49\textwidth}
\centering
\begin{tikzpicture}[scale=0.69]
\tikzset{
deleted vertex/.style={draw,shape=circle,fill=white,inner sep=1.7pt},
retained vertex/.style={draw,shape=circle,fill=black,inner sep=1.7pt}
}
\node[deleted vertex,label=above:$u$] (u) at (0,0) {};
\node[deleted vertex,label=above:$v$] (v) at (1.3,1.1) {};
\node[deleted vertex,label=above:$w$] (w) at (2.6,1.1) {};
\node[retained vertex,label=above:$z$] (z) at (3.9,1.1) {};
\node[deleted vertex,label=below:$a$] (a) at (1.3,-1.0) {};
\node[retained vertex,label=below:$p$] (p) at (2.6,-1.0) {};
\node[deleted vertex,label=left:$y$] (y) at (-1.5,0.9) {};
\node[deleted vertex,label=below:$b$] (b) at (0,-1.85) {};
\node[retained vertex,label=below:$h$] (h) at (2.5,-1.85) {};
\node[deleted vertex,label=left:$p_1$] (p1) at (-1.3,-3.15) {};
\node[draw=none] at (0,-3.05) {$\cdots$};
\node[deleted vertex,label=right:$p_{d-2}$] (pd) at (1.3,-3.15) {};
\draw (u)--(v)--(w)--(z);
\draw (u)--(a)--(p);
\draw (u)--(y);
\draw (u)--(b)--(h);
\draw (b)--(p1);
\draw (b)--(pd);
\end{tikzpicture}
\caption{A bad remaining core neighbor $b$.}
\end{subfigure}
\caption{Configurations excluded in
\textbf{Lemma~\ref{lem:one-thread-good-neighbor}}. Hollow vertices are deleted.}
\label{fig:one-thread-good-neighbor}
\end{figure}

\begin{proof}
Let $y$ be the unique $1$-neighbor of $u$ corresponding to its ring neighbor. Write $Q=uvwz$ where $v,w$ are its two $2$-vertices and $z$ is its other anchor. 
By \textbf{Lemma~\ref{lem:structure-1}(i)}, $d_G(z)\ge 4$.

Suppose first that $u$ is incident with at least two $1$-threads.
Since $d_G(u)=4$, the neighbors $y$ and $v$ leave exactly two
remaining incidences, so $u$ is incident with precisely two
$1$-threads. Write them as $uap$ and $utq$, where $a,t$ are the
$2$-vertices and $p,q$ the other anchors (with degree at least $3$). Put
$S=\{u,v,w,a,t,y\}$ and take an admissible $L$-coloring $\varphi$
of $G-S$. The vertices $z,p,q$ are non-isolated in $G-S$; hence $\varphi^*(p),\varphi^*(q),\varphi^*(z)$ are defined.

Let $C_0=L(u)\setminus\{\varphi(p),\varphi(q)\}$. Then
$|C_0|\ge3$. For $\alpha\in C_0$, choose
$c_\alpha\in L(w)\setminus\{\alpha,\varphi(z),\varphi^*(z)\}$
and consider the family $\mathcal A_\alpha$ of available color sets for the neighbors of $u$ where
\[
\mathcal A_\alpha=\{ L(y)\setminus\{\alpha\}, L(a)\setminus\{\alpha,\varphi(p),\varphi^*(p)\},L(t)\setminus\{\alpha,\varphi(q),\varphi^*(q)\}, 
L(v)\setminus\{\alpha,c_\alpha,\varphi(z)\}\}.
\]
Let $B=\emptyset$. There does not exist a \textnormal{PCF}-representative system for $\mathcal A_\alpha \circ B$.  By \textbf{Lemma~\ref{lem:pcf-representative-system}}, all four sets in  $\mathcal A_\alpha $ must be singletons for each $\alpha\in C_0$. On the other
hand, the four sets can be singletons for at most $2,2,2,3$ values
of $\alpha$, respectively. Therefore $4|C_0|\le9$, contradicting
$|C_0|\ge3$. Thus $u$ is incident with exactly one $1$-thread.

Write this unique $1$-thread as $uap$, and let $b$ be the remaining
neighbor of $u$. Suppose that $b$ is bad. Put
$d=d_H(b)=d_G(b)$, let $p_1,\ldots,p_{d-2}$ be the $1$-neighbors
of $b$ corresponding to its ring neighbors, and let $h$ be its
other core neighbor. By \textbf{Lemma~\ref{lem:structure-5}} and
\textbf{Corollary~\ref{cor:structure-4}}\textbf{(iii)}, $b$ is
incident with neither a $1$-thread nor a $2^+$-thread. Hence $d_G(h)=d_H(h)\ge 3$. Delete
\[
S=\{u,v,w,a,b,y\}\cup\{p_j:j\in[d-2]\}
\]
and take an admissible $L$-coloring $\varphi$ of $G-S$. The
vertices $z,p,h$ are non-isolated in $G-S$. 
Hence $\varphi^*(p),\varphi^*(h),\varphi^*(z)$ are defined.

Apply \textbf{Lemma~\ref{lem:bad-two-color-extension}} to $b$, where its
$1$-neighbors $p_1,\ldots,p_{d-2}$, and the colored neighbor $h$ have the same meaning.
So $b$ has two choices $\gamma_1,\gamma_2$ and a witness color $\lambda$
(a unique color in the subsequent coloring).
Let $P=\{\gamma_1,\gamma_2\}$. 
Put
$C_0=L(u)\setminus\{\varphi(p),\lambda\}$, so $|C_0|\ge3$.
For $\alpha\in C_0$, choose
$c_\alpha\in L(w)\setminus\{\alpha,\varphi(z),\varphi^*(z)\}$
and consider the family $\mathcal B_\alpha$ of available color sets for the neighbors of $u$ where
\[
\mathcal B_\alpha=\{L(y)\setminus\{\alpha\},
L(a)\setminus\{\alpha,\varphi(p),\varphi^*(p)\},
P\setminus\{\alpha\},
L(v)\setminus\{\alpha,c_\alpha,\varphi(z)\}\}.
\]
Let $B=\emptyset$. There does not exist a \textnormal{PCF}-representative system for $\mathcal B_\alpha \circ B$. Otherwise, we color the neighborhood of $u$ using such a system first, where $\varphi(b)=\gamma_i$ for some $i\in [2]$, and then color the neighborhood of $b$ using the \textnormal{PCF}-representative system corresponding to $\gamma_i$ as described in \textbf{Lemma~\ref{lem:bad-two-color-extension}}. The resulting coloring is a \textnormal{PCF} $L$-coloring of $G$, a contradiction.  

By \textbf{Lemma~\ref{lem:pcf-representative-system}}, all the sets in $\mathcal B_\alpha$ are singleton sets for every $\alpha\in C_0$, while the
four sets above are singletons for at most $2, 2, 2, 3$ choices of
$\alpha$, respectively. Hence double counting gives $4|C_0|\le9$, a contradiction. Therefore
$b$ is good, and $u$ has no bad core neighbor.
\end{proof}

\begin{lemma}\label{lem:no-one-thread-bad-neighbor}
Let $u$ be a good core $k$-vertex satisfying
\(n_{\mathrm{ring}}(u)=k-3\),
and suppose that $u$ is incident with a $2^+$-thread $Q$. If $u$
is incident with no $1$-thread, then $u$ has no bad core neighbor.
\end{lemma}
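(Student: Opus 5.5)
The plan is to argue by contradiction exactly as in \textbf{Lemmas~\ref{lem:thread-reduction}} and~\ref{lem:one-thread-good-neighbor}. By \textbf{Lemma~\ref{lem:structure-1}}\textbf{(i)} we have $k\ge4$. By \textbf{Corollary~\ref{cor:structure-4}}\textbf{(ii)}, $Q$ is the only $2^+$-thread at $u$, and by \textbf{Lemma~\ref{lem:structure-1}}\textbf{(ii)} it is a $2$- or $3$-thread. Write $Q=uvwz$ or $Q=uvwzr$, and let $y_1,\ldots,y_{k-3}$ be the $1$-neighbors of $u$ in $G$.

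\emph{Step 1: the two remaining neighbors are core $3^+$-vertices.} Let $x$ and $b$ be the two remaining neighbors of $u$. Neither is a $1$-vertex. If one of them were a $2$-vertex, it would lie on a thread at $u$. That thread cannot be a $1$-thread (by hypothesis) or a second $2^+$-thread. By \textbf{Lemma~\ref{lem:structure-1}}\textbf{(iii)} it also cannot return to $u$. Hence $x$ and $b$ are core $3^+$-vertices.

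Suppose $b$ is bad, of degree $d$, with $1$-neighbors $p_1,\ldots,p_{d-2}$ and other neighbor $h$. By \textbf{Lemma~\ref{lem:structure-5}} and \textbf{Corollary~\ref{cor:structure-4}}\textbf{(iii)}, $h$ is a core $3^+$-vertex, so it is non-isolated after deletion.

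\emph{Step 2: the reduction.} Delete
\[
S=\{u,v,w,b,y_1,\ldots,y_{k-3},p_1,\ldots,p_{d-2}\},
\]
adding $z$ when $Q$ is a $3$-thread. Take an admissible coloring $\varphi$ of $G-S$. In the $3$-thread case, first precolor $z$ with $\zeta\in L(z)\setminus\{\varphi(r),\varphi^*(r)\}$ and fix $\varphi^*(z)=\varphi(r)$, as in \textbf{Lemma~\ref{lem:thread-reduction}}.

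Apply \textbf{Lemma~\ref{lem:bad-two-color-extension}} to $b$ to obtain $P=\{\gamma_1,\gamma_2\}$ and the witness color $\lambda$. Set $\beta=\varphi(x)$ and
\[
C=L(u)\setminus\{\beta,\varphi^*(x),\lambda\},
\]
so that $|C|\ge k-2$. The color $\lambda$ is excluded so that it stays unique at $b$ once $u$ is colored.

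For $\alpha\in C$, choose $c_\alpha\in L(w)\setminus\{\alpha,\varphi(z),\varphi^*(z)\}$. Consider the family
\[
\mathcal A_\alpha=\bigl(L(y_1)\setminus\{\alpha\},\ldots,L(y_{k-3})\setminus\{\alpha\},\;P\setminus\{\alpha\},\;L(v)\setminus\{\alpha,c_\alpha,\varphi(z)\}\bigr)
\]
with respect to $B=(\beta)$. A \textnormal{PCF}-representative system would give a \textnormal{PCF} $L$-coloring of $G$: put $\varphi(u)=\alpha$, $\varphi(w)=c_\alpha$, and then color the $p_j$ via \textbf{Lemma~\ref{lem:bad-two-color-extension}} for the chosen $\gamma_i$. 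So no such system exists.

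\emph{Step 3: counting.} \textbf{Lemma~\ref{lem:pcf-representative-system}} with $s=1$, $r=0$ gives $q_\alpha\ge3$ for every $\alpha\in C$, with $q_\alpha\ge5$ when $D_\alpha\ge3$ and $q_\alpha=k-1$ when $D_\alpha=1$. In the other direction, the singleton counts per set are at most $2$ for each $y_i$, $2$ for $P$, and $3$ for $v$. Hence
\[
3(k-2)\le\sum_{\alpha\in C}q_\alpha\le 2k-1,
\]
which forces $k\le5$.

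\emph{Step 4: the tight cases (main obstacle).}

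For $k=5$, equality holds throughout. This forces $L(y_i)\subseteq C$, $P\subseteq C$ and $L(v)=C\cup\{\varphi(z)\}$. Then every representative lies in $C\not\ni\beta$, so $\beta$ is unique at $u$, a contradiction.

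For $k=4$ we have $|C|\ge2$ and the sum lies between $6$ and $7$, so there is slack of one. In this case I would mimic the end of \textbf{Lemma~\ref{lem:thread-reduction}}. Some $\alpha$ makes all three sets singletons, and the only way to avoid a unique color is for the $v$-set to equal $\{\beta\}$ while the $y$- and $P$-singletons coincide. From this I would pin down $L(y)$, $P$ and $L(v)$ and then produce an explicit coloring. The options are to give $u$ the other color of $C$, to use the freedom in choosing $c_\alpha$, or, in the $3$-thread case, to recolor $z$ with a second admissible color, as in the intersection argument at the end of \textbf{Lemma~\ref{lem:thread-reduction}}.

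I also expect to verify that $\varphi(u)\ne\varphi(h)$ is not required, since $u$ and $h$ need not be adjacent; if $u$ and $h$ coincide with other vertices, those degenerate situations must be checked separately. This bookkeeping in the $k=4$ case is where I expect most of the difficulty.
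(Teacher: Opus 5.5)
Your Steps 1--3 and the case $k=5$ match the paper's argument; whether you precolor $z$ or keep it in $G-S$ makes no difference. The gap is the case $k=4$. You leave it as a list of options, and none of them closes it.

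In the tight configuration the counting gives $C=\{c_1,c_2\}$, $L(y)=P=C$, $A_{c_i}(v)=\{\beta\}$, and hence $L(v)=C\cup\{\varphi(z),\beta\}$, $c_{c_1}=c_2$ and $c_{c_2}=c_1$. This has three consequences:
\begin{itemize}
\item Both colors of $C$ are exactly the colors already ruled out, so ``the other color of $C$'' is not available.
\item $c_\alpha$ can be forced, for example when $L(w)=\{c_1,c_2,\varphi(z),\varphi^*(z)\}$.
\item Recoloring $z$ is possible only when $Q$ is a $3$-thread.
\end{itemize}
When $Q$ is a $2$-thread, $z$ is a precolored anchor. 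Moreover, \textbf{Lemma~\ref{lem:thread-reduction}} does not exclude this situation: its conclusion ``$k=4$ and $Q$ is a $2$-thread'' is precisely the residue you propose to mimic away. So the bad neighbor $b$ must be exploited in a genuinely new way.

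The missing idea is to color $u$ with the witness color $\lambda$ itself.
\begin{itemize}
\item Since $|C|=2$ and $|L(u)|=5$, the colors $\beta,\varphi^*(x),\lambda$ are pairwise distinct. So $\varphi(u)=\lambda$ is proper and keeps the unique color of $x$.
\item Set $\varphi(y)=\varphi(w)=c_1$ and $\varphi(v)=c_2$. This is valid because $c_{c_2}=c_1\in L(w)$ and $c_1,c_2\notin\{\varphi(z),\varphi^*(z)\}$. Now $y,v,x$ carry three distinct colors $c_1,c_2,\beta$, so $u$ has a unique color whatever $b$ receives.
\item Then discard the output of \textbf{Lemma~\ref{lem:bad-two-color-extension}}, since $\lambda$ is no longer unique at $b$, and recolor $b,p_1,\ldots,p_{d-2}$ from scratch. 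For $c\in R=L(b)\setminus\{\lambda,\varphi(h),\varphi^*(h)\}$, seek a \textnormal{PCF}-representative system for $(L(p_j)\setminus\{c\})_{j\in[d-2]}$ with respect to $B=(\lambda,\varphi(h))$.
\item By \textbf{Lemma~\ref{lem:pcf-representative-system}}, failure forces $q_c\ge2$ for every $c\in R$. Each $p_j$ yields at most two singleton incidences, so $2|R|\le2(d-2)$.
\item If $\lambda=\varphi(h)$, then $|R|\ge d-1$, which is impossible.
\item If $\lambda\neq\varphi(h)$, equality forces $L(p_j)\subseteq R$ for all $j$. Then any choice $a_j\in L(p_j)\setminus\{c\}$ leaves both $\lambda$ and $\varphi(h)$ unique at $b$, a contradiction.
\end{itemize}
Coloring $u$ with $c_1$ and letting $b$ leave $P$ does not work instead. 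There $u$'s uniqueness costs an extra excluded color at $b$, leaving only $|R|\ge d-3$, which the count cannot contradict.
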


\begin{figure}[htbp]
\centering
\begin{tikzpicture}[scale=0.75]
\tikzset{
deleted vertex/.style={draw,shape=circle,fill=white,inner sep=1.7pt},
retained vertex/.style={draw,shape=circle,fill=black,inner sep=1.7pt}
}
\node[deleted vertex,label=above:$u$] (u) at (0,0) {};
\node[deleted vertex,label=above:$v$] (v) at (1.3,1.15) {};
\node[deleted vertex,label=above:$w$] (w) at (2.6,1.15) {};
\node[retained vertex,label=above:$z$] (z) at (3.9,1.15) {};
\node[retained vertex,label=below:$x$] (x) at (0,-2.0) {};
\node[deleted vertex,label=below:$b$] (b) at (1.45,-1.05) {};
\node[retained vertex,label=below:$h$] (h) at (2.9,-1.05) {};
\node[deleted vertex,label=left:$y_1$] (y1) at (-1.55,1.15) {};
\node[draw=none] at (-1.85,0) {$\vdots$};
\node[deleted vertex,label=left:$y_{k-3}$] (yk) at (-1.55,-1.15) {};
\node[deleted vertex,label=below:$p_1$] (p1) at (0.9,-2.75) {};
\node[draw=none] at (1.65,-2.65) {$\cdots$};
\node[deleted vertex,label=below:$p_{d-2}$] (pd) at (2.4,-2.75) {};
\draw (u)--(v)--(w)--(z);
\draw (u)--(x);
\draw (u)--(b)--(h);
\draw (u)--(y1);
\draw (u)--(yk);
\draw (b)--(p1);
\draw (b)--(pd);
\end{tikzpicture}
\caption{Configuration excluded in
\textbf{Lemma~\ref{lem:no-one-thread-bad-neighbor}}. The path $uvwz$ is the
initial segment of $Q$: $z$ is the other anchor for a $2$-thread and the third
$2$-vertex for a $3$-thread. Hollow vertices are deleted.}
\label{fig:no-one-thread-bad-neighbor}
\end{figure}

\begin{proof}
By \textbf{Lemma~\ref{lem:structure-1}}\textbf{(i)}, $k\ge4$.
By \textbf{Corollary~\ref{cor:structure-4}}\textbf{(ii)}, $Q$ is
the unique $2^+$-thread incident with $u$, and
\textbf{Lemma~\ref{lem:structure-1}}\textbf{(ii)} implies that $Q$
is a $2$-thread or a $3$-thread. Let
$y_1,\ldots,y_{k-3}$ be the $1$-neighbors of $u$ in $G$
corresponding to its ring neighbors, and write $uvwz$ for the
initial segment of $Q$, where $v,w$ are the first two $2$-vertices. 

Since $u$ has no $1$-thread, its two remaining core neighbors are
$3^+$-vertices: a core $2$-neighbor would lie on a thread incident
with $u$ by \textbf{Lemma~\ref{lem:structure-1}}\textbf{(iii)},
contrary to the uniqueness of $Q$. Suppose, to the contrary, that
one of these two neighbors, say $b$, is bad, and let $x$ be the
other $3^+$-vertex. Put $d=d_H(b)=d_G(b)$. Let
$p_1,\ldots,p_{d-2}$ be the $1$-neighbors of $b$ corresponding to
its ring neighbors. By \textbf{Lemma~\ref{lem:structure-5}} and
\textbf{Corollary~\ref{cor:structure-4}}\textbf{(iii)}, $b$ is
incident with neither a $1$-thread nor a $2^+$-thread. Hence its
other core neighbor, denoted by $h$, is a $3^+$-vertex and
\(N_G(b)=\{u,h,p_1,\ldots,p_{d-2}\}\).

Set
\[
S=\{u,v,w,b\}\cup\{y_i:i\in[k-3]\}
\cup\{p_j:j\in[d-2]\}
\]
and take an admissible $L$-coloring $\varphi$ of $G-S$. 
Since $d_G(z)\ge 2$ by \textbf{Lemma~\ref{lem:leaf-neighbor}} and $d_G(z)\ge 4$ by \textbf{Lemma~\ref{lem:structure-1} (i)} when $z$ coincides with some of $x, h$,
vertices $x,h,z$ are non-isolated in $G-S$. 
Consequently $\varphi^*(x)$, $\varphi^*(h)$, and
$\varphi^*(z)$ are defined.

Apply \textbf{Lemma~\ref{lem:bad-two-color-extension}} to $b$ with its
$1$-neighbors $p_1,\ldots,p_{d-2}$, and the colored neighbor $h$.
Let $P=\{\gamma_1,\gamma_2\}$ be the resulting $2$-set, and let
$\lambda$ be the corresponding witness color at $b$. Put
$\beta=\varphi(x)$ and
$C=L(u)\setminus\{\beta,\varphi^*(x),\lambda\}$. Since
$|L(u)|=k+1$,
\begin{equation}\label{eq:no-one-thread-C-size}
|C|\ge k-2.
\end{equation}
For $\alpha\in C$, choose
$c_\alpha\in L(w)\setminus\{\alpha,\varphi(z),\varphi^*(z)\}$
and define
\(A_\alpha(y_i)=L(y_i)\setminus\{\alpha\}\) for \(i\in[k-3]\),
\(A_\alpha(b)=P\setminus\{\alpha\}\), and
\(A_\alpha(v)=L(v)\setminus\{\alpha,c_\alpha,\varphi(z)\}\).
Let
\[
\mathcal A_\alpha=
\bigl(A_\alpha(y_1),\ldots,A_\alpha(y_{k-3}),
A_\alpha(b),A_\alpha(v)\bigr),
\qquad B=(\beta).
\]
A \textnormal{PCF}-representative system for $\mathcal A_\alpha$ with respect to
$B$, together with the corresponding \textnormal{PCF}-representative system described in \textbf{Lemma~\ref{lem:bad-two-color-extension}},
would yield a \textnormal{PCF} $L$-coloring of $G$, a contradiction. 
Hence no such system exists.

Let $q_\alpha$ and $D_\alpha$ denote the number of singleton sets
and the maximum set size in $\mathcal A_\alpha$, respectively, and
put
\[
C_1=\{\alpha\in C:D_\alpha=1\},
\qquad
C_{\ge3}=\{\alpha\in C:D_\alpha\ge3\}.
\]
By \textbf{Lemma~\ref{lem:pcf-representative-system}},
\begin{equation}\label{eq:no-one-thread-singleton-lower}
\sum_{\alpha\in C}q_\alpha
\ge3|C|+(k-4)|C_1|+2|C_{\ge3}|.
\end{equation}
On the other hand, the sets $A_\alpha(y_i)$, $A_\alpha(b)$, and
$A_\alpha(v)$ are singletons for at most $2$, $2$, and $3$
choices of $\alpha$, respectively. Thus, in total 
\begin{equation}\label{eq:no-one-thread-singleton-upper}
\sum_{\alpha\in C}q_\alpha\le2k-1.
\end{equation}
Combining \eqref{eq:no-one-thread-C-size}--\eqref{eq:no-one-thread-singleton-upper} gives
\begin{equation*}
k-5+(k-4)|C_1|+2|C_{\ge3}|\le0.
\end{equation*}
Hence $k\le5$.

Suppose that $k=5$. Equality holds throughout \eqref{eq:no-one-thread-C-size}--\eqref{eq:no-one-thread-singleton-upper}, so
$|C|=3$ and $C_1=C_{\ge3}=\emptyset$. Equality in the individual
upper bounds implies $L(y_i)\subseteq C$ for every $i\in[2]$,
$P\subseteq C$, and $A_\alpha(v)$ is a singleton for every
$\alpha\in C$. The last condition, together with $|L(v)|=4$,
gives $L(v)=C\cup\{\varphi(z)\}$. Thus every set in
$\mathcal A_\alpha$ is contained in $C$. Since $\beta\notin C$,
arbitrary representatives for each set in $\mathcal A_\alpha$ are different from $\beta$, and form a \textnormal{PCF}-representative system for $\mathcal A_\alpha$ with respect to
$B$. Such a system, together with the corresponding \textnormal{PCF}-representative system described in \textbf{Lemma~\ref{lem:bad-two-color-extension}}, would yield a \textnormal{PCF} $L$-coloring of $G$, a contradiction. 

We are left with $k=4$. Let $y=y_1$ and $C=\{c_1,c_2\}$. Equality
in the bounds gives
\begin{equation*}
L(y)=C,
\qquad
P=C,
\end{equation*}
and all three available sets are singletons for both colors of $C$.
The singleton from
$A_{c_i}(v)$ must be $\beta$. Otherwise there exists a \textnormal{PCF}-representative system for $\mathcal A_{c_i} \circ B$ where $\beta$ is a unique color, a contradiction. 
 Hence
\begin{equation*}
A_{c_1}(v)=A_{c_2}(v)=\{\beta\}.
\end{equation*}
Since $|L(v)|=4$, comparison of
$L(v)\setminus\{c_i,c_{c_i},\varphi(z)\}=\{\beta\}$ for
$i=1,2$ yields
\begin{equation}\label{eq:no-one-thread-v-list}
L(v)=C\cup\{\varphi(z),\beta\},
\end{equation}
\begin{equation*}
c_{c_1}=c_2,
\qquad
c_{c_2}=c_1.
\end{equation*}
The four colors in \eqref{eq:no-one-thread-v-list} are distinct, and the definition of
$c_{c_i}$ gives
\begin{equation*}
c_1,c_2\notin\{\varphi(z),\varphi^*(z)\}.
\end{equation*}
Moreover, since $|C|=2$ and $|L(u)|=5$, the three colors
$\beta,\varphi^*(x),\lambda$ are pairwise distinct members of
$L(u)$; in particular,
\begin{equation*}
\lambda\neq\varphi^*(x).
\end{equation*}

Set $\varphi(u)=\lambda$,
$\varphi(y)=\varphi(w)=c_1$, and $\varphi(v)=c_2$. Now the neighborhood of $u$ has three unique colors. Reconsider the color of $b$ and its uncolored neighbors. 

Put
\begin{equation*}
\delta=\varphi(h),\qquad
\theta=\varphi^*(h),\qquad
R=L(b)\setminus\{\lambda,\delta,\theta\}.
\end{equation*}
For $c\in R$ and $j\in[d-2]$, let
\(A_c(p_j)=L(p_j)\setminus\{c\}\), and put
\(B=(\lambda,\delta)\).
Thus it suffices to find some $c\in R$ for which
\(\bigl(A_c(p_1),\ldots,A_c(p_{d-2})\bigr)\)
has a \textnormal{PCF}-representative system with respect to $B$, since
such a system yields a \textnormal{PCF} $L$-coloring of $G$, a contradiction.

Suppose that no such $c$ exists. For $c\in R$, let $q_c$ be the
number of singleton sets among the $A_c(p_j)$, and let
\(D_c=\max_{j\in[d-2]}|A_c(p_j)|\).
Since each $p_j$ has a $2$-list, $D_c\le2$.
By  \textbf{Lemma~\ref{lem:pcf-representative-system}}, the failure of a
\textnormal{PCF}-representative system implies $q_c\ge2$ whether $\lambda=\delta$ or not.
Therefore
\[
2|R|
\le
\sum_{c\in R}q_c.
\]
On the other hand, for each $j$, the set $A_c(p_j)$ is a singleton
only when $c\in L(p_j)$. Hence each $p_j$ contributes at most two
singleton incidences, and consequently
\begin{equation}\label{eq:bad-fixed-color-singleton-upper}
\sum_{c\in R}q_c\le2(d-2).
\end{equation}

If $\lambda=\delta$, then $|R|\ge d-1$, contradicting \eqref{eq:bad-fixed-color-singleton-upper}.
Thus $\lambda\neq\delta$. Now $|R|\ge d-2$, so equality must hold
throughout \eqref{eq:bad-fixed-color-singleton-upper}. In particular, each $p_j$ contributes exactly
two singleton incidences, and hence
\[
L(p_j)\subseteq R=L(b)\setminus\{\lambda,\delta,\theta\}
\qquad (j\in[d-2]).
\]

Fix any $c\in R$. For \(j\in[d-2]\), choose \(a_j\in L(p_j)\setminus\{c\}\). Then none of the colors $a_j$ 
equals $\lambda$ or $\delta$. As $\lambda\neq\delta$, both $\lambda$ and
$\delta$ therefore occur exactly once in
\((a_1,\ldots,a_{d-2},\lambda,\delta)\).
This gives a \textnormal{PCF}-representative system with respect to $B$, a
contradiction.
\end{proof}

Combining \textbf{Lemmas~\ref{lem:thread-reduction},~\ref{lem:one-thread-good-neighbor}, and~\ref{lem:no-one-thread-bad-neighbor}} gives the following corollary.
\begin{lemmacorollary}\label{cor:final-structure}
Let $u$ be a good core $k$-vertex satisfying
\(n_{\mathrm{ring}}(u)=k-3\),
and suppose that $u$ is incident with a $2^+$-thread $Q$.
\begin{itemize}
    \item[\textnormal{\textbf{(i)}}]
    If $Q$ is a $3$-thread, then $u$ is incident with no
    $1$-thread and has no bad core neighbor.

    \item[\textnormal{\textbf{(ii)}}]
    If $Q$ is a $2$-thread, then
    $u$ is incident with at most one $1$-thread and has no bad core neighbor.
\end{itemize}
\end{lemmacorollary}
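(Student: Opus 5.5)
This corollary is pure bookkeeping: I will combine the three preceding lemmas, splitting according to whether $u$ is incident with a $1$-thread. Throughout, $u$ is a good core $k$-vertex with $n_{\mathrm{ring}}(u)=k-3$ and is incident with the $2^+$-thread $Q$. By \textbf{Lemma~\ref{lem:structure-1}}\textbf{(i),(ii)} we have $k\ge4$ and $Q$ is a $2$- or $3$-thread. By \textbf{Corollary~\ref{cor:structure-4}}\textbf{(ii)}, $Q$ is the only $2^+$-thread at $u$, so the two statements \textbf{(i)} and \textbf{(ii)} exhaust the possibilities for $Q$.

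For \textbf{(i)}, let $Q$ be a $3$-thread. If $u$ were incident with a $1$-thread, \textbf{Lemma~\ref{lem:thread-reduction}} would force $Q$ to be a $2$-thread, which is a contradiction. Hence $u$ is incident with no $1$-thread. \textbf{Lemma~\ref{lem:no-one-thread-bad-neighbor}} then applies verbatim: its hypotheses are that $u$ is good, $n_{\mathrm{ring}}(u)=k-3$, $u$ is incident with a $2^+$-thread, and $u$ has no $1$-thread. It gives that $u$ has no bad core neighbor.

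For \textbf{(ii)}, let $Q$ be a $2$-thread. If $u$ has no $1$-thread, \textbf{Lemma~\ref{lem:no-one-thread-bad-neighbor}} again gives no bad core neighbor, and "at most one $1$-thread" holds trivially. Otherwise $u$ is incident with a $1$-thread, and \textbf{Lemma~\ref{lem:thread-reduction}} gives $k=4$, so $n_{\mathrm{ring}}(u)=1$. Now $u$ is a good core $4$-vertex with one ring neighbor, incident with the $2$-thread $Q$ and with a $1$-thread. These are exactly the hypotheses of \textbf{Lemma~\ref{lem:one-thread-good-neighbor}}, which yields that $u$ is incident with exactly one $1$-thread and has no bad core neighbor.

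I do not expect a genuine obstacle here, since all the reducibility work is already done in the three lemmas. The only care needed is to check that each lemma's hypotheses are met in each branch. In particular, \textbf{Lemma~\ref{lem:one-thread-good-neighbor}} requires $k=4$ and $n_{\mathrm{ring}}(u)=1$, and these are supplied only after first invoking \textbf{Lemma~\ref{lem:thread-reduction}}.
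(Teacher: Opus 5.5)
Your proposal is correct and matches the paper's argument, which obtains this corollary by combining \textbf{Lemmas~\ref{lem:thread-reduction}, \ref{lem:one-thread-good-neighbor}, and~\ref{lem:no-one-thread-bad-neighbor}}; you make explicit the case split on whether $u$ is incident with a $1$-thread. You also correctly note that \textbf{Lemma~\ref{lem:thread-reduction}} must be applied first, to supply $k=4$ and $n_{\mathrm{ring}}(u)=1$, before \textbf{Lemma~\ref{lem:one-thread-good-neighbor}} can be used.
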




\section{Proof of Theorem~\ref{th1}}\label{s2}

Let $G$, $L$, $K$, and $H=H_5(G)$ be as fixed in
\textbf{Section~\ref{s:structural}}. In particular,
\(\operatorname{mad}(H)<12/5\), and all structural conclusions proved
there apply.

We now discharge on \(H\). Give each vertex $v$ initial charge
\(\mu(v)=d_H(v)\) and apply the following rules.
\begin{itemize}
    \item[\textbf{(R1)}]
    Every core $4^+$-vertex sends charge $1$ to each of its ring
    neighbors.

    \item[\textbf{(R2)}]
    If a thread has two distinct anchors, then each anchor sends
    charge \(\frac15\) to every \(2\)-vertex on that thread.

    \item[\textbf{(R3)}]
    If a thread has exactly one anchor, then this anchor sends
    charge \(\frac25\) to every \(2\)-vertex on that thread.

    \item[\textbf{(R4)}]
    Every good core vertex sends charge $\frac15$ to each adjacent
    bad core vertex.
\end{itemize}

Let \(\mu^*(v)\) denote the final charge of $v$. 
We divide the analysis into four cases.

\textbf{Case 1.} $v$ is a $2$-vertex. 

If $v$ is a ring vertex, then
\textbf{Lemma~\ref{lem:ring-structure}}\textbf{(iii) and (iv)} implies that it is on a $4$-thread whose anchors are either one ring $3$-vertex or
two distinct ring $3$-vertices. Accordingly, $v$ receives
$\frac25$ under \textbf{(R3)}, or $\frac15$ from each anchor under
\textbf{(R2)}. If $v$ is a core vertex, then by 
\textbf{Lemma~\ref{lem:structure-1}}\textbf{(iii)}, it is on a
thread with two distinct core anchors. So it receives $\frac15$
from each under \textbf{(R2)}. Thus  we always have    
\( \mu^*(v)=2+\frac25=\frac{12}{5}. \)

\textbf{Case 2.} $v$ is a ring $3$-vertex.

By
\textbf{Lemma~\ref{lem:ring-structure}}\textbf{(i)}, it has one
core neighbor, which is a core $4^+$-vertex by
\textbf{Lemma~\ref{lem:ring-structure}}\textbf{(ii)}. Hence $v$ receives charge $1$ under \textbf{(R1)}. By
\textbf{Lemma~\ref{lem:ring-structure}}\textbf{(iv)}, either $v$
is the sole anchor of one $4$-thread and sends
$4\cdot\frac25=\frac85$ under \textbf{(R3)}, or it is an anchor of exactly two $4$-threads with distinct anchors and sends
$8\cdot\frac15=\frac85$ under
\textbf{(R2)}. Therefore \( \mu^*(v)=3+1-\frac85=\frac{12}{5}. \)

\textbf{Case 3.} $v$ is a good core $k$-vertex.

Put
$q=k-n_{\mathrm{ring}}(v)$. After \textbf{(R1)}, the charge remaining
at $v$ is $q$, and $q\ge3$ because $v$ is good. By
\textbf{Lemma~\ref{lem:ring-structure}}\textbf{(v)}, every thread
incident with $v$ contains only core $2$-vertices; by
\textbf{Lemma~\ref{lem:structure-1}}\textbf{(iii)}, its two
anchors are distinct. Hence only \textbf{(R2)}, not
\textbf{(R3)}, applies to threads incident with $v$. Moreover,
\textbf{Lemma~\ref{lem:structure-1}}\textbf{(ii)} shows that each
such thread contains at most three $2$-vertices.

Let $t=n_{2^+\text{-thread}}(v)$, let $a$ be the number of
$1$-threads incident with $v$, and let $b$ be the number of bad core
neighbors of $v$. Then 
\begin{equation}\label{eq:good-core-incidence-count}
a+b+t\le q.
\end{equation}
Moreover, \textbf{Lemma~\ref{lem:structure-4}} gives
\begin{equation}\label{eq:good-core-thread-count}
t\le q-2.
\end{equation}
The total charge sent by $v$ under
\textbf{(R2)} and \textbf{(R4)} is at most
\begin{equation}\label{eq:good-core-outgoing-charge}
\frac{3t+a+b}{5}.
\end{equation}
 From \eqref{eq:good-core-incidence-count}--\eqref{eq:good-core-outgoing-charge}, \(3t+a+b\le3t+(q-t)=q+2t\le3q-4. \)
Consequently,
\[
\mu^*(v)\ge q-\frac{3q-4}{5}
=\frac{2q+4}{5}.
\]
When $q\ge4$, \(\mu^*(v)\ge\frac{12}{5}\).

It remains to consider $q=3$. By \eqref{eq:good-core-thread-count}, $t\le1$. If $t=0$,
then \eqref{eq:good-core-incidence-count} gives $a+b\le3$, and
$\mu^*(v)\ge3-\frac35=\frac{12}{5}$. Suppose that $t=1$.
Then $n_{\mathrm{ring}}(v)=k-3$. Let $m\in\{2,3\}$ be the number
of $2$-vertices on the $2^+$-thread. The charge sent under
\textbf{(R2)} and \textbf{(R4)} is at most
\(\frac{m+a+b}{5}\).
If $m=3$, \textbf{Corollary~\ref{cor:final-structure}}\textbf{(i)} gives
$a=b=0$. If $m=2$,
\textbf{Corollary~\ref{cor:final-structure}}\textbf{(ii)} gives
$a+b\le1$. Thus $m+a+b\le3$ in either case, and
\(
\mu^*(v)\ge3-\frac35=\frac{12}{5}.
\)

\textbf{Case 4.} $v$ is a bad core $k$-vertex. 

Since $n_{\mathrm{ring}}(v)=k-2$, the charge remaining after
\textbf{(R1)} is $2$. By
\textbf{Corollary~\ref{cor:structure-4}}\textbf{(iii)}, $v$ is
incident with no $2^+$-thread, and
\textbf{Lemma~\ref{lem:structure-5}} excludes a $1$-thread. The two remaining neighbors of $v$ are core $3^+$-vertices. 
They are good
by \textbf{Lemma~\ref{lem:structure-6}}, so $v$ receives
$\frac15$ from each under \textbf{(R4)}. Therefore
\(
\mu^*(v)=2+\frac15+\frac15=\frac{12}{5}.
\)

In summary, every vertex of \(H\) has final charge at least \(12/5\). Since the
total charge is preserved,
\[
2|E(H)|=\sum_{v\in V(H)}\mu^*(v)
\ge\frac{12}{5}|V(H)|.
\]
Thus the average degree of $H$ is at least $12/5$, contrary to
$\operatorname{mad}(H)<12/5$ established at the beginning of the
section. This completes the proof of
\textbf{Theorem~\ref{th1}}.
\qed

\section{Proof of Theorem~\ref{th2}}\label{s3}


A graph is a \emph{linear forest} if each of its components is a
path. A coloring is called \emph{superlinear} if it is proper, the
subgraph induced by any two color classes is a linear forest, and the
two neighbors of every $2$-vertex receive distinct colors. Obviously, a superlinear coloring of a subcubic graph is also a \textnormal{PCF} coloring. 

We shall use the following theorem of Liu and Yu. 

\begin{lemma}[\cite{liu2013linear}]\label{lem:liu-yu}
Let $G$ be a connected subcubic graph distinct from $C_5$ and
$K_{3,3}$. If $L$ is a list assignment satisfying
$|L(v)|\ge4$ for every $v\in V(G)$, then $G$ has a superlinear
$L$-coloring.
\end{lemma}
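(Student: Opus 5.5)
The statement is the theorem of Liu and Yu, which the paper cites. If I had to reprove it, I would take a minimal counterexample $G$ with a $4$-list assignment $L$, minimal in $|V(G)|$. I would first rephrase superlinearity locally. An $L$-coloring $\varphi$ is superlinear if and only if three things hold. First, it is proper. Second, no vertex sees the same color on all three of its neighbors, and the two neighbors of a $2$-vertex get distinct colors; this is exactly the condition that every vertex has degree at most $2$ in every $2$-colored subgraph. Third, there is no $2$-colored cycle. The strategy is to delete a small set $S$, color $G-S$ by minimality, and extend. Components of $G-S$ isomorphic to $C_5$ or $K_{3,3}$ need separate treatment. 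For $C_5$ with non-identical lists I would use Lemma~\ref{lem:c5-characterization}-type arguments. For $K_{3,3}$ I would use a direct check that after deleting $S$ such a component has a vertex of degree $\le 2$ in $G$, so it is not the $K_{3,3}$ itself and must be handled as a small explicit case.

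\textbf{Step 1: vertices of degree at most $2$.} Let $v$ be a vertex with $d_G(v)\le 2$. Color $G-v$ and count the colors $v$ must avoid. It must avoid the colors of its neighbors (at most $2$). For each neighbor $u$, it must avoid the color that would make $u$ see one color thrice, or make the two neighbors of a $2$-vertex $u$ equal; this is at most one color per neighbor. A bichromatic cycle through $v$ requires both neighbors of $v$ to lie on a $2$-colored path, and that path determines a single forbidden color. Naively this gives up to $5$ forbidden colors against $|L(v)|=4$. So I would strengthen the deletion: delete a maximal path of $2$-vertices, or a $2$-vertex together with a neighbor. Then I would color in an order that lets the last vertex have spare colors, using the fact that a forbidden color that is both a neighbor color and a "cycle-closing" color is counted once. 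This should show $\delta(G)=3$.

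\textbf{Step 2: the cubic case.} This is the main obstacle. For $G$ cubic, every vertex has $4$ colors but up to $3$ neighbor colors plus "three-equal" constraints at three neighbors plus cycle constraints. I would mimic the proof of Brooks' theorem for list coloring. Take a shortest cycle $C$ (or an induced cycle through a suitably chosen vertex) and delete it. Each vertex of $C$ then has exactly one colored neighbor outside $C$, so the outside constraints cost at most $2$ colors per cycle vertex (the neighbor's color, plus at most one color forbidden by that neighbor's local condition). That leaves lists of size $\ge 2$ on $C$. I would then color the cycle like a list-coloring of a cycle with the additional requirement of avoiding a $2$-colored cycle and three-equal neighborhoods along $C$. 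The case where all residual lists equal the same $2$-set is the expected failure mode. I would break it by choosing the deleted cycle with a chord-free neighbor that can be recolored, or by using a vertex whose neighborhood is not a triangle-free $K_{3,3}$-like structure. Excluding $K_{3,3}$ should exactly correspond to the case where every such rescue fails.

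\textbf{Step 3: exceptional components.} Finally, I would verify that the exceptional components arising in $G-S$ ($C_5$ with equal lists, or a $K_{3,3}$) either cannot occur in a cubic $G$ by connectivity and degree counting, or can be precolored with a distinguished vertex in the manner of Definition~\ref{def:admissible}, so that the extension still goes through.
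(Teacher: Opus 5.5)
The paper does not prove this lemma; it quotes it from Liu and Yu~\cite{liu2013linear} and only uses the easy fact that a superlinear coloring of a subcubic graph is \textnormal{PCF}. Read as a standalone proof, your sketch has genuine gaps, and they begin in Step~1.

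Suppose you color $G-v$ for a $2$-vertex $v$ whose neighbors $u_1,u_2$ are nonadjacent. Superlinearity at $v$ itself requires $\varphi(u_1)\neq\varphi(u_2)$. Both vertices are colored by the induction hypothesis, so nothing forces this, and no choice of $\varphi(v)$ can repair it.

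So the obstruction is not the number of forbidden colors. Once $\varphi(u_1)\neq\varphi(u_2)$, no bichromatic cycle can pass through $v$. Also, if $u_1,u_2$ are $3$-vertices of $G$, they are $2$-vertices of $G-v$, so no ``three-equal'' color is forbidden either.

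The obvious repair is to color $G-v+u_1u_2$ instead. This forces $\varphi(u_1)\neq\varphi(u_2)$, but it no longer forces the other two neighbors of a $3$-vertex $u_i$ to differ. Let $c_i$ be their common color when they agree. Then $v$ can face the four distinct forbidden colors $\varphi(u_1),\varphi(u_2),c_1,c_2$ while $|L(v)|=4$. Hence $\delta(G)=3$ needs an actual reduction, which ``delete a longer path and color in a good order'' does not supply.

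Step~2 carries the whole weight of the theorem, and there the proposal contains no argument. Your residual-list count treats every constraint as a forbidden color at a single vertex, but the decisive constraints are not of that form:
\begin{itemize}
\item The requirement that a cycle vertex not see one color three times couples its two cycle-neighbors with its outside neighbor.
\item A $2$-colored cycle can run along part of $C$ and close through a $2$-colored path of $G-C$ joining the outside neighbors of two cycle vertices. This constrains pairs of cycle vertices, not single ones.
\end{itemize}
You correctly flag identical residual $2$-lists as the failure mode: on an even $C$, every proper coloring from them makes $C$ itself bichromatic. But you never resolve it. Recoloring ``a chord-free neighbor'', and the claim that the unrescuable cases are exactly $K_{3,3}$, are hopes rather than steps.

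In Step~3, the admissible coloring of \textbf{Definition~\ref{def:admissible}} gives the distinguished vertex two equally colored neighbors. In a cubic graph every vertex of a $C_5$-component also has a neighbor in $S$. So compatibility with the linear-forest condition would have to be verified, not assumed.
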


\begin{lemma}\label{k33}
The graph $K_{3,3}$ is \textnormal{PCF} $4$-choosable.
\end{lemma}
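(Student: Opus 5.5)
The plan is to reduce the statement to a purely combinatorial condition on the two color classes, and then make choices side by side. Write the bipartition of $K_{3,3}$ as $\{a_1,a_2,a_3\}$ and $\{b_1,b_2,b_3\}$, and let $L$ be a $4$-list assignment. Each vertex has exactly three neighbors, all on the opposite side. A sequence of three colors has a unique color unless all three entries are equal. Hence an $L$-coloring $\varphi$ is \textnormal{PCF} if and only if the following two conditions hold:
\begin{itemize}
\item[(a)] the sets $T=\varphi(\{a_1,a_2,a_3\})$ and $\varphi(\{b_1,b_2,b_3\})$ are disjoint, which gives properness;
\item[(b)] neither side is monochromatic.
\end{itemize}
So it suffices to choose colors on the $a$-side using at least two colors, and then color the $b$-side from the residual lists $L(b_j)\setminus T$ without making it monochromatic.

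\textbf{Case 1: two lists on one side can share a color.} Suppose first that some list on the $a$-side meets the union of the other two. Say $\gamma\in L(a_3)\cap L(a_1)$, after relabeling. Set $\varphi(a_1)=\varphi(a_3)=\gamma$ and choose $\varphi(a_2)\in L(a_2)\setminus\{\gamma\}$. Then $|T|=2$, so every residual list $L(b_j)\setminus T$ has size at least $2$. Color $b_1$ arbitrarily from its residual list. Then color $b_2$ from its residual list avoiding $\varphi(b_1)$, and color $b_3$ from its residual list arbitrarily. Conditions (a) and (b) hold, and we are done.

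\textbf{Case 2: the three lists on the $a$-side are pairwise disjoint.} Pick any $t_i\in L(a_i)$ and put $T=\{t_1,t_2,t_3\}$, which is a $3$-set. Each residual list $L(b_j)\setminus T$ is nonempty. If two residual lists contain different colors, or some residual list has size at least $2$, we can color the $b$-side non-monochromatically and finish.

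The only failure is when every residual list is the same singleton $\{x\}$. Since $|L(b_j)|=4$, this forces $L(b_j)=T\cup\{x\}$ for all $j$. In that situation, recolor $a_1$ with some $t_1'\in L(a_1)\setminus\{t_1,x\}$, which exists because $|L(a_1)|=4$. The color $t_1'$ is not in $L(b_1)=T\cup\{x\}$: it differs from $t_1$ and $x$ by choice, and from $t_2,t_3$ by disjointness of the $a$-lists. The new set is $T'=\{t_1',t_2,t_3\}$, so the new residual list at $b_1$ contains both $t_1$ and $x$. Hence it has size at least $2$, and the previous argument now colors the $b$-side non-monochromatically.

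The main obstacle is this last degenerate configuration of Case 2. The point to check carefully there is that the recoloring of $a_1$ keeps the $a$-side non-monochromatic, which is automatic since $T'$ still consists of three distinct colors by pairwise disjointness. It also does not destroy any residual list on the $b$-side: each $L(b_j)\setminus T'$ still contains $x$, because $t_1'\neq x$. Everything else is routine checking of (a) and (b).
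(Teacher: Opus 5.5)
Your proof is correct, but it is organized differently from the paper's. You fix colors on the whole $a$-side before touching the $b$-side. That forces a split into two cases: two $a$-lists share a color (so $|T|=2$ and every residual $b$-list has size at least $2$), or the $a$-lists are pairwise disjoint, where you then need to recolor $a_1$ to escape the configuration $L(b_j)=T\cup\{x\}$.

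The paper avoids all case analysis by changing the order of coloring. It colors $a_1,a_2$ with distinct colors $\alpha\neq\beta$. It then colors all of $B$ from the lists $L(b_j)\setminus\{\alpha,\beta\}$, each of size at least $2$, with $\varphi(b_1)\neq\varphi(b_2)$. Only at the end does it color $a_3$ from $L(a_3)$ minus the at most three colors used on $B$. Since $a_1,a_2$ already make $A$ non-monochromatic, the color of $a_3$ only has to avoid $B$, and four colors against three constraints always suffice.

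Your explicit characterization (disjoint color sets on the two sides, neither side monochromatic) is exactly what the paper checks in its final paragraph. So your write-up is more transparent about why these conditions suffice, but the paper's ordering makes the argument shorter and case-free.

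One small point: in the degenerate case you write $|L(b_j)|=4$, whereas lists only satisfy $|L(b_j)|\ge 4$. The conclusion $L(b_j)=T\cup\{x\}$ still holds, because $L(b_j)\subseteq T\cup\{x\}$ and $|T\cup\{x\}|=4$.
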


\begin{proof}
Let the bipartition of $K_{3,3}$ be
$A=\{a_1,a_2,a_3\}$ and $B=\{b_1,b_2,b_3\}$. Let $L$ be a list
assignment with $|L(v)|\ge4$ for every vertex $v$.

Choose a color $\alpha\in L(a_1)$, and then choose a color
$\beta\in L(a_2)\setminus\{\alpha\}$. Set $\varphi(a_1)=\alpha$ and $\varphi(a_2)=\beta$. For each $b_i\in B$, set
$L'(b_i)=L(b_i)\setminus\{\alpha,\beta\}$. Then
$|L'(b_i)|\ge2$ for each $i\in[3]$. Choose
$\varphi(b_1)=\gamma\in L'(b_1)$, and choose
$\varphi(b_2)=\delta\in L'(b_2)\setminus\{\gamma\}$. Choose any
color $\varphi(b_3)\in L'(b_3)$. Since at most three colors appear
on $B$, choose \(\varphi(a_3)\in L(a_3)\setminus
\{\varphi(b_1),\varphi(b_2),\varphi(b_3)\}\).

No color used on $A$ appears on $B$, so the coloring is proper.
Moreover, $A$ receives at least two colors, since
$\varphi(a_1)\neq\varphi(a_2)$, and $B$ receives at least two
colors, since $\varphi(b_1)\neq\varphi(b_2)$. Every vertex of $A$
has neighborhood $B$, and every vertex of $B$ has neighborhood
$A$. Thus every vertex has a color appearing exactly once in its
neighborhood, and the coloring is a \textnormal{PCF} $L$-coloring of $K_{3,3}$ for any given 4-list assignment $L$.
\end{proof}

\begin{proof}[Proof of Theorem~\ref{th2}]
Suppose, to the contrary, that the theorem is false. Let $G$ be a
counterexample with $|V(G)|$ minimum, and let $L_0$ be a
$\kappa_G$-list assignment for which $G$ has no \textnormal{PCF}
$L_0$-coloring. Obviously, $G$ is connected. For every $v\in V(G)$, choose
$L(v)\subseteq L_0(v)$ with
\(|L(v)|=\kappa_G(v)\).
A \textnormal{PCF} $L$-coloring of $G$ would also be a \textnormal{PCF} $L_0$-coloring, so $G$ has
no \textnormal{PCF} $L$-coloring. The graph $K_1$ is trivially \textnormal{PCF}
$L$-colorable, so $|V(G)|\ge2$.

Let $\emptyset\neq S\subsetneq V(G)$ and let $F$ be a component of
$G-S$ with $F\not\cong C_5$. Since $F$ is connected and subcubic,
and
\[
|L(v)|=\kappa_G(v)\ge\kappa_F(v)
\qquad\text{for every }v\in V(F),
\]
the minimality of $G$ gives a \textnormal{PCF} $L|_F$-coloring of $F$. Hence the
hypotheses of \textbf{Lemma~\ref{lem:leaf-neighbor}} are satisfied.
Every $1$-vertex of $G$ would therefore have a $4^+$-neighbor,
which is impossible because $G$ is subcubic. Since $G$ is connected
and has at least two vertices, it follows that $\delta(G)\ge2$.

Since $G$ is subcubic, every vertex has degree $2$ or $3$. Hence
$|L(v)|\ge4$ for every $v\in V(G)$. If $G=K_{3,3}$, then the
result follows from \textbf{Lemma~\ref{k33}}, a contradiction.
Therefore $G$ is a connected subcubic graph distinct from $C_5$
and $K_{3,3}$.

By \textbf{Lemma~\ref{lem:liu-yu}}, the graph $G$ has a superlinear
$L$-coloring. This coloring is a 
\textnormal{PCF} $L$-coloring of $G$, contradicting the choice of $G$. This
completes the proof.
\end{proof}

\noindent\textbf{Acknowledgement}

This research is supported in part by the National Natural Science
Foundation of China (Nos. 12471330, 12571373).

\noindent\textbf{Declaration on the Use of Generative AI}

All mathematical ideas, problem formulations, proof strategies, arguments, and results presented in this paper were developed by the authors. Generative AI tools were used solely for auxiliary purposes, including checking typographical and linguistic errors, improving the organization and clarity of the manuscript, and drawing attention to possible inconsistencies or gaps that were subsequently examined by the authors. The authors independently verified all statements and proofs and take full responsibility for the content of the paper.

\bibliography{cas-refs}
\end{document}